\tikzset{Isom/.style={draw=none,every to/.append style={edge node={node [sloped, allow upside down, autoah=false]{$\simeq$}}}}}   
\tikzset{Equal/.style={draw=none,every to/.append style={edge node={node [sloped, allow upside down, auto=false]{$=$}}}}}   
\tikzset{Inside/.style={draw=none,every to/.append style={edge node={node [sloped, allow upside down, auto=false]{$\in$}}}}}   
\newcommandx{\toDoChi}[2][1=]{\todo[linecolor=blue, backgroundcolor=white,bordercolor=blue,#1]{#2}}           
\newcommandx{\todoA}[2][1=]{\todo[linecolor=green, backgroundcolor=white,bordercolor=green,#1]{#2}}          
\newcommandx{\todoN}[2][1=]{\todo[linecolor=red, backgroundcolor=white,bordercolor=red,#1]{#2}}			
\newcommand{\excise}[1]{}
\newcommand{\LV}{\mathfrak{L}(V)}
\newcommand{\ZZ}{\mathbb{Z}}
\newcommand{\CC}{\mathbb{C}}
\newcommand{\tQp}{s_+}
\newcommand{\tQm}{s_-}
\newcommand{\tQpm}{s_\pm}
\newcommand{\Qp}{{Q_{+}}}
\newcommand{\Qm}{{Q_{-}}}
\newcommand{\Qpm}{{Q_{\pm}}}
\newcommand{\Vv}{\mathcal{V}}
\newcommand{\Ll}{\mathcal{L}}
\newcommand{\Cc}{\mathscr{C}}
\newcommand{\Ct}{\widetilde{C}}
\newcommand{\Spec}{\text{Spec}}
\newtheorem{theorem}[subsubsection]{Theorem}
\newtheorem{theorem*}{Theorem}
\newtheorem*{corVB}{VB corollary}
\newtheorem*{Fthm}{Factorization theorem}
\newtheorem{lemma}[subsubsection]{Lemma}
\newtheorem{proposition}[subsubsection]{Proposition}
\newtheorem{corollary}[subsubsection]{Corollary}
\theoremstyle{definition}
\newtheorem{rmk}[subsubsection]{Remark}
\newtheorem{defn}[subsubsection]{Definition}
\theoremstyle{definition}
\theoremstyle{remark}
\newcommand{\setword}[2]{%
  \phantomsection
  #1\def\@currentlabel{\unexpanded{#1}}\label{#2}%
}
\begin{document}

\title[On factorization of conformal blocks from vertex algebras]{On factorization and vector bundles of conformal blocks 
from vertex algebras}

\subjclass[2010]{14H10, 17B69 (primary), 81R10, 81T40, 14D21 (secondary)}
\keywords{Vertex algebras, conformal blocks and coinvariants, 
\newline\indent vector bundles on moduli of curves, factorization and sewing}

\begin{abstract}
Representations of vertex operator algebras define sheaves of coinvariants and conformal blocks on moduli of stable pointed curves.
Assuming certain finiteness and semisimplicity conditions, we prove that such sheaves satisfy the factorization conjecture and consequently are vector bundles. Factorization is essential to a recursive formulation of invariants, like ranks and Chern classes, and to produce new constructions of rational conformal field theories and cohomological field theories. 
\end{abstract}


{\tiny{
\author[C.~Damiolini]{Chiara Damiolini}
\address{Chiara Damiolini \newline \indent Department of Mathematics,  \newline \indent  University of Texas at Austin, Austin TX 78712
 \newline \indent {\textit{Previous address:}} Department of Mathematics, Princeton University}
\email{chiara.damiolini@austin.utexas.edu}}}
{\tiny{
\author[A.~Gibney]{Angela Gibney}
\address{Angela Gibney \newline \indent Department of Mathematics,  \newline \indent University of Pennsylvania, Philadelphia, PA 19104-6395
\newline \indent {\textit{Previous address:}} Department of Mathematics, Rutgers University}
\email{agibney@sas.upenn.edu}}}
{\tiny{
\author[N.~Tarasca]{Nicola Tarasca}
\address{Nicola Tarasca 
\newline \indent Department of Mathematics \& Applied Mathematics,
\newline \indent Virginia Commonwealth University, Richmond, VA 23284
\newline \indent {\textit{Previous address:}} Department of Mathematics, Rutgers University}
\email{tarascan@vcu.edu}}}

\maketitle


By assigning a module over a vertex operator algebra to each marked point on a stable pointed curve, one can construct dual vector spaces of coinvariants and conformal blocks, giving rise to sheaves on moduli spaces of stable pointed curves.  The main result of this paper is that these sheaves satisfy the factorization property (Theorem \ref{thm:factorization}), as conjectured in \cite{ZhuGlobal, bzf}. Namely, if certain  finiteness and semisimplicity conditions hold, vector spaces of coinvariants and conformal blocks at a nodal curve decompose as products of analogous spaces at each component of its normalization.

We also show that sheaves of coinvariants satisfy the sewing property (Theorem \ref{thm:SewingAndFactorization}),  a refined version of factorization at infinitesimal smoothings of nodal curves.  From this and their projectively flat connection on families of smooth curves \cite{dgt}, we deduce they are vector bundles (\hyperlink{thm:VBS}{VB corollary}).    

Our findings generalize a number of results known in special cases and for low genus. A historical account with references  is given in  \S\S\ref{History} and \ref{FiniteHistory}.  

Factorization leads to recursive formulas for ranks and is used to show that the Chern characters define semisimple cohomological field theories, hence the Chern classes lie in the tautological ring  (\cite{dgt3}, building on results for affine Lie algebras from \cite{mop, moppz}). A study of these  tautological classes may lead to progress on open questions: As proposed by Pandharipande \cite{panrio}, a computation of such Chern classes independently of the projective flatness of the connection \cite{dgt} would yield relations in the tautological ring, and could be used to test Pixton's conjecture  \cite{AP, FJ}.  

For sheaves defined by integrable modules over affine Lie algebras, vector spaces of conformal blocks are canonically isomorphic to generalized theta functions (see \cite{LaszloSorger} and references therein).   When in addition the genus is zero, vector bundles of such coinvariants are globally generated \cite{fakhr}, hence their Chern classes have positivity properties.  For instance, first Chern classes are base-point-free and  thus give rise to morphisms, some with images having modular interpretations \cite{GiaGib, gjms}.  It is natural to expect that the more general vector bundles of coinvariants and conformal blocks studied here can be shown to have analogous properties under appropriate assumptions.  This has been supported by a preliminary investigation \cite{DG}.

To outline our results, we set some notation. We refer to a stable pointed coordinatized curve as a triple $(C,P_{\bullet}, t_\bullet)$ where $(C,P_{\bullet})$ is a stable $n$-pointed curve,  $P_\bullet=(P_1,\dots,P_n)$, and $t_\bullet=(t_1,\dots,t_n)$ with $t_i$ a  formal coordinate  at the point $P_i$. Let $M^\bullet=(M^1, \dots, M^n)$ be an $n$-tuple of finitely generated admissible modules over a vertex operator algebra $V$ (see \S\S \ref{sec:CVADef}-\ref{sec:VMod}).  When $C\setminus P_{\bullet}$ is affine, the vector space of coinvariants $\mathbb{V}(V;M^{\bullet})_{(C,P_{\bullet}, t_\bullet)}$ is defined as the largest quotient of the tensor product $\otimes_{i=1}^n M^i$ by the action of a Lie algebra determined by $V$ and $(C,P_{\bullet}, t_\bullet)$, see \S \ref{sec:Coinvariants}. In general, by adding more marked points, one can reduce to the case when \mbox{$C\setminus P_{\bullet}$} is affine, see \eqref{eq:coinvdef}. Before now, two Lie algebras have been used to define coinvariants: Zhu's Lie algebra $\mathfrak{g}_{\,C\setminus P_ \bullet}(V)$ and the (former) chiral Lie algebra $\mathscr{L}_{C\setminus P_ \bullet}(V)$.  Here, we introduce a new chiral Lie algebra $\Ll_{C\setminus P_ \bullet}(V)$.

Zhu's Lie algebra (\S \ref{sec:gCoinv}) is defined when the vertex algebra $V$ is quasi-primary generated and $\mathbb{Z}_{\geq 0}$-graded with lowest degree space of dimension one, for either \textit{fixed} smooth curves  \cite{ZhuGlobal, an1}, or  for \textit{rational} stable pointed curves \textit{with coordinates} \cite{nt}. To show that $\mathfrak{g}_{\, C\setminus P_ \bullet}(V)$ is a Lie algebra, Zhu uses that any \textit{fixed} smooth algebraic curve admits an atlas such that all transition functions are M\"obius transformations. Transition functions between charts on families of curves of arbitrary genus are more complicated, and for an arbitrary vertex operator algebra,  $\mathfrak{g}_{\, C\setminus P_ \bullet}(V)$ is not well-defined.

The chiral Lie algebra $\mathscr{L}_{C\setminus P_\bullet}(V)$ is available for curves of arbitrary genus and for more general vertex operator algebras~$V$, not necessarily quasi-primary generated. Defined for smooth pointed curves by Frenkel and Ben-Zvi \cite[\S 19.4.14]{bzf} and shown in \cite{bzf} to coincide with that studied by Beilinson and Drinfeld  \cite{bd}, the chiral Lie algebra was extended to nodal curves in \cite{dgt}, enabling the construction of coinvariants on such curves.

In \S \ref{Chiral}, we introduce and study a new chiral Lie algebra $\Ll_{C\setminus P_ \bullet}(V)$, whose coinvariants are projectively flat over $\mathcal{M}_{g,n}$, and additionally satisfy factorization under some natural hypotheses.  See \S\ref{CloserLook} for a description of $\Ll_{C\setminus P_ \bullet}(V)$ on nodal curves. 

In \cite{nt}, Nagatomo and Tsuchiya remark that the coinvariants on rational curves with coordinates using Zhu's Lie algebra are equivalent to those considered by Beilinson and Drinfeld.  In Proposition \ref{prop:IsoCo}, we verify their genus zero statement, and further show that coinvariants  from the chiral Lie algebra are isomorphic to those given by  $\mathfrak{g}_{\,C\setminus P_ \bullet}(V)$ whenever Zhu's Lie algebra is defined.  For instance, when $V$ is quasi-primary generated and one works over a family of curves admitting an atlas where all transition functions are M\"obius transformations, both perspectives are equivalent.

To state our main result, let $(C,P_\bullet, t_\bullet)$ be a stable pointed coordinatized curve, as above, with one node $Q$. Let $\Ct\to C$ be the  normalization,   \mbox{$Q_\bullet:=(\Qp,\Qm)$}  the pair of preimages of $Q$ in $\Ct$, and $s_\bullet:=(s_+,s_-)$ with  $s_\pm$ a formal coordinate at $\Qpm$.   Let $\mathscr{W}$ be  a set of representatives of isomorphism classes of simple $V$-modules, and for $W \in \mathscr{W}$, let $W'$ be its contragredient module  (\S \ref{ContraMod}).

\begin{Fthm}[Theorem \ref{thm:factorization}] \hypertarget{thm:Fact}{}
Let $V$ be a  rational, $C_2$-cofinite vertex operator algebra with one-dimensional weight zero space, and let $M^\bullet$ be an $n$-tuple of finitely-generated admissible $V$-modules. One has: \begin{equation}\label{fact}
\mathbb{V}\left(V;M^{\bullet}\right)_{(C,P_{\bullet}, t_\bullet)} \cong \bigoplus_{W \in \mathscr{W}} \mathbb{V}\left(V;M^{\bullet}\otimes W\otimes W' \right)_{\left(\Ct,P_{\bullet}\sqcup Q_\bullet, t_\bullet\sqcup s_\bullet\right)}.
\end{equation}
\end{Fthm}

If $\Ct= C_{+}\sqcup C_{-}$ is disconnected, with $Q_{\pm} \in C_{\pm}$,  then \eqref{fact} is isomorphic to
\[ \bigoplus_{W \in \mathscr{W}} \mathbb{V}\left(V;M^{\bullet}_{+}\otimes W \right)_{X_{+}}\otimes \mathbb{V}\left(V;M^{\bullet}_{-}\otimes W' \right)_{X_-}
\]
where $X_{\pm}:=(C_{\pm}, P_\bullet|_{C_{\pm}}\sqcup Q_{\pm}, t_\bullet|_{C_{\pm}}\sqcup s_{\pm})$,  and $M^{\bullet}_{\pm}$ are the modules at those points $P_\bullet$ that are in $C_{\pm}$.

The isomorphism giving the factorization theorem is constructed in \S \ref{section:Fact}.

Propositions \ref{prop:chiralnodal}, \ref{prop:FiniteTwisted}, and \ref{prop:coinvZhat} are at the heart of this work, and arise from the construction in \S\ref{sec:VAstable} of the sheaf $\Vv_C$ globalizing a vertex algebra $V$ over a nodal curve $C$, integral to the definition of the chiral Lie algebra. The sheaf $\Vv_C$ is a slight variant on the sheaf $\mathscr{V}_C$ that we defined in \cite{dgt}. The two sheaves coincide for smooth curves, and as we explain here, we recover the results of \cite{dgt} using $\Vv_C$ in place of  $\mathscr{V}_C$.  So while the projectively flat connection can be obtained in both constructions (see Proposition \ref{prop:LC} for $\Vv_C$), to have factorization, we have used $\Vv_C$ (see \S \ref{VSheaf}, and \S \ref{VSheafS}).  In Proposition \ref{prop:chiralnodal}, we explicitly describe the chiral Lie algebra on a nodal curve in terms of elements of the chiral Lie algebra on its normalization.  This involves stable $k$-differentials that satisfy an infinite number of identities. 

In Proposition \ref{prop:FiniteTwisted}, we show that vector spaces of coinvariants defined from the chiral Lie algebra and smooth curves of arbitrary genus are finite-dimensional.  Known to be true in special cases, this result is a natural generalization of work of Abe and Nagatomo \cite{an1} for coinvariants defined from Zhu's Lie algebra and smooth curves  with formal coordinates (see \S \ref{FiniteHistory} for the history of the problem).    As in \cite{an1}, we assume here that $V$ is $C_2$-cofinite, 
which implies $C_k$-cofiniteness for $k \ge 1$ \cite{BuhlSpanning,KarelLi}.

Proposition \ref{prop:coinvZhat} reinterprets coinvariants at a nodal curve as coinvariants on the normalization by the action of a Lie subalgebra of the chiral Lie algebra.  The proof of this result, for which we assume $V$ is $C_1$-cofinite, uses that lowest weight $V$-modules admit spanning sets of PBW-type, following \cite[Cor.~3.12]{KarelLi}.

In \S\ref{sec:sheafofcoinariantsfinal}, following Tsuchimoto \cite{ts} in the case of simple affine vertex algebras, we show the sheaf of coinvariants $\mathbb{V}\left(V;M^{\bullet}\right)$ is coordinate-free and descends to the moduli space $\overline{\mathcal{M}}_{g,n}$ of stable $n$-pointed curves of genus $g$. As an application of the \hyperlink{thm:Fact}{factorization theorem}, we show:

\begin{corVB} \hypertarget{thm:VBS}{}

Let $V$ be a  rational, $C_2$-cofinite vertex operator algebra with one-dimensional weight zero space, and $M^\bullet$ be an $n$-tuple of finitely-generated admissible $V$-modules.    Then $\mathbb{V}\left(V;M^\bullet \right)$ is a vector bundle of finite rank on $\overline{\mathcal{M}}_{g,n}$.
\end{corVB} 

The proof of the \hyperlink{thm:VBS}{VB corollary} is presented in \S \ref{sec:proofofVBC}. The result on the interior of $\overline{\mathcal{M}}_{g,n}$, the moduli space ${\mathcal{M}}_{g,n}$ of smooth pointed curves, follows from finite-dimensionality of coinvariants (Proposition \ref{prop:FiniteTwisted}) and the existence of a projectively flat connection \cite{dgt}. Two ingredients are needed to give \hyperlink{thm:VBS}{VB corollary}  on the whole space $\overline{\mathcal{M}}_{g,n}$:   Theorem \ref{thm:CoherenceOnS}, a more general result on finite-dimensionality of coinvariants, and the sewing property (Theorem \ref{thm:SewingAndFactorization}). Each of these involve evaluation of the sheaf of coinvariants on a family formed by infinitesimally smoothing a nodal curve (\S \ref{smoothings}). The proof of the sewing theorem (\S \ref{section:Sewing})   relies on the \hyperlink{thm:Fact}{factorization theorem}  and a sewing procedure originally found in \cite[\S 6.2]{tuy}. 

\subsection{Results on sheaves of coinvariants from \cite{dgt}}\label{DGT1Sheaf} 
As stated, the vertex algebra  sheaf $\Vv_C$ defined in \S\ref{sec:VAstable} is a slight variant on the sheaf  $\mathscr{V}_C$ defined in \cite{dgt}.  These give rise to sheaves of chiral Lie algebras $\Ll_{C\setminus P_{\bullet}}(V)$ and $\mathscr{L}_{C\setminus P_{\bullet}}(V)$, respectively, which in turn define two sheaves of coinvariants on the moduli space of curves, potentially different on the boundary. As these sheaves agree on the interior $\mathcal{M}_{g,n}$, i.e., the moduli space of smooth pointed curves, the results of \cite{dgt} on $\mathcal{M}_{g,n}$ hold for the sheaves of coinvariants defined here. In  particular, the identification of the Atiyah algebra acting on sheaves of coinvariants on $\mathcal{M}_{g,n}$  \cite[Theorem]{dgt}, and consequently, the computation of their Chern character on $\mathcal{M}_{g,n}$ \cite[Corollary]{dgt} hold here. Combining these results with the \hyperlink{thm:Fact}{factorization theorem}, we show in \cite{dgt3} that the sheaves of coinvariants defined here in the case of self-contragredient simple vertex algebras give rise to semisimple cohomological field theories, thus allowing one to determine their Chern character on $\overline{\mathcal{M}}_{g,n}$ in terms of the fusion rules.

A natural question is whether the two sheaves of coinvariants coincide on the  whole of $\overline{\mathcal{M}}_{g,n}$.  In particular, it is natural to ask whether  sheaves of coinvariants defined by $\mathscr{L}_{C\setminus P_{\bullet}}(V)$ satisfy factorization.

\subsection{History of factorization and sewing}\label{History}
Tsuchiya and Kanie used integrable modules at a fixed level over affine Lie algebras to form  spaces of coinvariants on smooth pointed rational curves with coordinates  \cite{TK}.   Generalized by Tsuchiya, Ueno, and Yamada  to moduli of stable pointed coordinatized curves of arbitrary genus, these sheaves were shown to satisfy a number of good properties including factorization and sewing \cite{tuy}. Tsuchimoto  \cite{ts} proved the bundles are independent of coordinates and descend to $\overline{\mathcal{M}}_{g,n}$. Beilinson, Feigin, and Mazur \cite{bfm} showed that factorization holds for coinvariants defined by modules over  Virasoro  algebras.  Our arguments  follow  \cite{nt} in the genus zero case after our study of the chiral Lie algebra allows one to replace Zhu's Lie algebra in the general~case. 

Sewing was proved for $g \in \{0,1\}$ by Huang \cite{HuangDiffEQ, HuangDualityModularInvariance, RigMod, VOAVer}.   His approach was to prove the operator product expansion and the modular invariance of intertwining operators, both conjectured by Moore and Seiberg  \cite{MooreSeiberg}. Huang  assumes that (i) $V=\oplus_{i\geq 0} V_i$ with $V_0\cong \mathbb{C}$; (ii) Every $\mathbb{N}$-gradable weak $V$-module is completely reducible; and  (iii) $V$ is $C_2$-cofinite. Our assumptions are (i); (ii') $V$ is rational;  and (iii).   Conditions (ii) and (iii) of Huang are equivalent to our conditions (ii') and (iii) (see \S \ref{RVAS} for more details). Huang shows that if one assumes in addition (iv) $V \cong V'$, then the modular tensor categories he constructs for $g \in \{0,1\}$ are {\em{rigid}} and {\em{nondegenerate}}. In case (i)-(iv) hold, $V$ is sometimes called {\em{strongly rational}}.

Codogni in \cite{codogni} proves factorization in case our hypotheses hold, with the additional assumptions (iv') $V \cong V'$ and (v) $V$ has no nontrivial modules.   Like Nagatomo and Tsuchiya, Codogni works with coinvariants defined on the moduli space of curves with formal coordinates.  We note that the additional  assumption (iv') gives that $V$ is quasi-primary generated and in particular,  coinvariants defined from Zhu's Lie algebra and the chiral Lie algebra are both well-defined and agree (see \S \ref{sec:CoinvariantsHistory}).  

\smallskip

Here we do not assume condition (iv) nor condition (v).  There are examples satisfying  (1--3) but not (v), see \S\ref{ExampleSection}. Furthermore, through private communications with Sven M\"oller, we have learned the existence of a vertex operator algebra that satisfies our conditions (1--3) but  not condition (iv).  Therefore, while establishing
 factorization for all $g\ge 0$, our work also covers  new examples of factorization for $g\in \{0,1\}$.

\subsection{History of coherence}\label{FiniteHistory}
 In \cite[page 3 and \S 5.5.4]{bzf} the authors single out rational vertex algebras as good candidates for defining finite-dimensional coinvariants (and hence leading to finite-rank vector bundles).  At that time, rationality and  $C_2$-cofiniteness were conjectured to be equivalent conditions \cite{DongLiMasonRegular, AbeBuhlDong}.  This has been disproved, as Abe has given a $C_2$-cofinite, non-rational vertex operator algebra~\cite{AbeNotRational}. 

Coherence of sheaves of coinvariants was shown previously in  special cases: for (1)   integrable modules at positive integral levels over affine Lie algebras  \cite{tuy}; (2)  modules over $C_2$-cofinite Virasoro vertex algebras  \cite{bfm};  (3) curves of genus zero and modules over $C_2$-cofinite vertex operator algebras $V=\oplus_{i\in \mathbb{N}}\,V_i$ such that  $V_0\cong \mathbb{C}$  \cite{nt};  (4) fixed smooth curves of positive  genus and modules over a quasi-primary generated, $C_2$-cofinite vertex operator algebras $V=\oplus_{i\in \mathbb{N}}\, V_i$ such that  \mbox{$V_0\cong \mathbb{C}$} \cite{an1}. After the first draft of this paper, \cite{vanekeren.heluani:2021} showed finite-dimensionality of spaces of coinvariants associated with trivial modules over elliptic curves under a weaker assumption than $C_2$-cofiniteness. In  \cite{DG} it is shown that if $V$ is generated in degree $1$ and  $g=0$, then coinvariants are finite-dimensional.


\section{Background}

\subsection{Vertex operator algebras}\label{sec:CVADef}
We work with a non-negatively graded  \textit{vertex operator algebra}, that is, a $4$-tuple  $\left(V, \bm{1}^{V}, \omega, Y(\cdot,z)\right)$, throughout simply denoted $V$ for short, such that:
\begin{enumerate}[(i)]
\item $V=\oplus_{i \in \mathbb{Z}_{\geq 0}} V_i$ is a vector space over $\mathbb{C}$ with $\dim V_i<\infty$;
\item  $\bm{1}^{V} \in V_0$ (the \textit{vacuum vector}), and $\omega \in V_2$ (the \textit{conformal vector});
\item $Y(\cdot,z)\colon V \rightarrow  \textrm{End}(V)\left\llbracket z,z^{-1} \right\rrbracket$ is a linear function  assigning to every element $A \in V$ the \textit{vertex operator} $Y(A,z) :=\sum_{i\in\mathbb{Z}} A_{(i)}z^{-i-1}$.
\end{enumerate}
The datum $\left(V, \bm{1}^{V}, \omega, Y(\cdot,z)\right)$ must satisfy the following axioms:

\begin{enumerate}[(a)]
\item \textit{(vertex operators are fields)} for all $A,B\in V$,  $A_{(i)}B=0$, for $i \gg 0$;
\item \textit{(vertex operators of the vacuum)}  $Y(\bm{1}^{V}, z)=\textrm{id}_V$:
\[
\bm{1}^{V}_{(-1)} = \textrm{id}_V \qquad \mbox{and} \qquad \bm{1}^{V}_{(i)} = 0, \quad\mbox{for $i\neq -1$,}
\]
and for all $A\in V$, \ $Y(A,z)\bm{1}^{V} \in A+ zV\llbracket z \rrbracket$:
\[
A_{(-1)}\bm{1}^{V} = A \qquad \mbox{and} \qquad A_{(i)}\bm{1}^{V} =0, \qquad \mbox{for $i\geq 0$};
\]
\item \textit{(weak commutativity)} for all $A,B\in V$, there exists an $N\in\mathbb{Z}_{\geq 0}$ such~that
\[
(z-w)^N \, [Y(A,z), Y(B,w)]=0 \quad \mbox{in }\textrm{End}(V)\left\llbracket z^{\pm 1}, w^{\pm 1} \right\rrbracket;
\]
\item \label{axiom.conf}
 \textit{(conformal structure)} for 
$Y(\omega,z)=\sum_{i\in\mathbb{Z}} \omega_{(i)}z^{-i-1}$,
\[
\left[\omega_{(p+1)}, \omega_{(q+1)} \right] = (p-q) \,\omega_{(p+q+1)} + \frac{c}{12} \,\delta_{p+q,0} \,(p^3-p) \,\textrm{id}_V. 
\]
Here $c\in\mathbb{C}$ is  the \textit{central charge} of $V$.  Moreover:
\[
\omega_{(1)}|_{V_i}=i\cdot\textrm{id}_V, \quad \text{for all } i, \qquad \mbox{and} \qquad Y\left(\omega_{(0)}A,z \right) = \partial_z Y(A,z).
\]
\end{enumerate}

\subsubsection{Action of Virasoro}\label{VirasoroSection}
As we next explain, the conformal structure encodes an action of the Virasoro (Lie) algebra $\textrm{Vir}$ on $V$.  The \textit{Witt (Lie) algebra} $\textrm{Der}\, \mathcal{K}$ represents the functor assigning to a $\mathbb{C}$-algebra $R$ the Lie algebra $\textrm{Der}\,\mathcal{K}(R):=R(\!( z)\!) \partial_z$  generated over $R$ by the derivations $L_p:=-z^{p+1}\partial_z$, for $p\in \mathbb{Z}$, with  relations $[L_p, L_q] = (p-q) L_{p+q}$.  The \textit{Virasoro (Lie) algebra} $\textrm{Vir}$ represents the functor assigning to $R$  the Lie algebra generated over $R$ by a formal vector $K$ and the elements $L_p$, for $p\in \mathbb{Z}$, with Lie bracket given~by
\[
[K, L_p]=0, \qquad [L_p, L_q] = (p-q) L_{p+q} + \frac{K}{12} (p^3-p)\delta_{p+q,0}.
\]

Setting $L_p=\omega_{(p+1)} \in \textrm{End}(V)$,  axiom \eqref{axiom.conf} gives an action of $\textrm{Vir}$  on $V$ with  \textit{central charge} $c\in \mathbb{C}$, that is, $K\in \textrm{Vir}$ acts as $c\cdot \textrm{id}_V$.

\subsubsection{Degree of $A_{(i)}$}
\label{sec:Degree} 
As a consequence of the axioms, one has  $A_{(i)}V_k\subseteq V_{k+d -i-1}$ for homogeneous $A\in V_d$ (see e.g.,~\cite{zhu}). Thus, we have
\begin{equation}
\label{eq:degAi}
\deg A_{(i)}:= \deg (A) -i-1, \qquad \mbox{for homogeneous $A$ in $V$.}
\end{equation}
Axiom \eqref{axiom.conf} implies that $L_0$ acts as a \textit{degree} operator on $V$, and  $L_{-1}$, called the \textit{translation}  operator, is determined by $L_{-1}A=A_{(-2)} \bm{1}^{V}$, for $A\in V$.

\subsection{Modules of vertex operator algebras}
\label{sec:VMod} 
Let $\left(V, \bm{1}^{V}, \omega,Y(\cdot,z)\right)$ be a vertex operator algebra. A \textit{weak $V$-module} is a pair $\left(M,Y^M(\cdot,z)\right)$, where:
\begin{enumerate}[(i)]
    \item $M$ is a vector space over $\mathbb{C}$;
    \item $Y^M(\cdot,z)\colon V \rightarrow  \textrm{End}(M)\left\llbracket z,z^{-1} \right\rrbracket$ is a linear function that assigns to $A \in V$ an  $\textrm{End}(M)$-valued \textit{vertex operator} $Y^M(A,z) :=\sum_{i\in\mathbb{Z}} A^M_{(i)}z^{-i-1}$.    
\end{enumerate}
The pair $\left(M,Y^M(\cdot,z)\right)$ must satisfy the following axioms:
\begin{enumerate}[(a)]
    \item for all $A \in V$ and $v \in M$, one has $A^M_{(i)}v=0$, for $i \gg 0$;
    \item $Y^M\left(\bm{1}^{V},z\right)=\textrm{id}_M$;
    \item \label{wcomm} for all $A, B \in V$, there exists $N\in\mathbb{Z}_{\geq 0}$ such that for all $v \in M$ one has
    \[
(z-w)^N \left[ Y^M(A,z), Y^M(B,w) \right]v=0;
\]
    \item \label{wass} for all $A \in V$ and $v \in M$,  there exists $N\in\mathbb{Z}_{\geq 0}$, such that for all $B \in V$ one has
     \[
(w+z)^N \left( Y^M(Y(A,w)B,z)- Y^M(A,w+z)Y^M(B,z)\right)v =0;
\]
    \item \label{VirAct} For   $Y^M(\omega,z) = \sum_{i \in \mathbb{Z}} \omega_{(i)}^M z^{-i-1}$, one has
    \[
    \left[\omega^M_{(p+1)}, \omega^M_{(q+1)}\right] = (p-q) \,\omega^M_{(p+q+1)} + \frac{c}{12} \,\delta_{p+q,0} \,(p^3-p) \,\textrm{id}_M, 
    \] 
	where $c \in \mathbb{C}$ is the central charge of $V$. We identify $\omega_{(p+1)} \in \textrm{End}(M)$ with an action of $L_{p} \in \textrm{Vir}$ on $M$. Moreover $Y^M\left(L_{-1}A,z \right) = \partial_z Y^M(a,z)$.
\end{enumerate}
 
In the literature, axiom (\ref{wcomm}) is referred to as \textit{weak commutativity}, and axiom (\ref{wass}) as  \textit{weak associativity}. Weak associativity and  weak commutativity are known to be equivalent to the \textit{Jacobi identity} (see e.g., \cite{dl, fhl,  lepli, lvertexsuperalg}).  Moreover, by \cite[Lemma 2.2]{DongLiMasonRegular}, axiom (\ref{VirAct}) is redundant.

Throughout,  by $V$-modules we mean admissible $V$-modules. These are   weak $V$-modules  such that 
\begin{enumerate}[(f)]
\item \label{admissibility} 
$M=\oplus_{i\in \mathbb{N}}\, M_i$ is $\mathbb{N}$-graded and
$A^M_{(i)} M_k \subseteq M_{k + \deg(A) -i-1}$ for every homogeneous $A \in V$.
\end{enumerate}
Note that $V$ is a module over itself  (see \cite[Thm 3.5.4]{lepli} or \cite[\S 3.2.1]{bzf}). 
We give a description of $V$-modules in \S \ref{sec:VmodAVmod}.

\subsection{The Lie algebra ancillary to $V$}
\label{LV}
Given a formal variable $t$, we call
\[
\mathfrak{L}_t(V) := \big( V\otimes \mathbb{C}(\!(t)\!) \big) \big/ \textrm{Im}\, \partial
\]
the \textit{Lie algebra} $\mathfrak{L}(V)=\mathfrak{L}_t(V)$ \textit{ancillary to} $V$. Here 
\begin{equation}
\label{eq:partialancillary}
\partial:= L_{-1}\otimes \textrm{id}_{\mathbb{C}(\!(t)\!)} +  \textrm{id}_V \otimes \partial_t.
\end{equation}
The space $\mathfrak{L}(V)$ is spanned by series of type $\sum_{i\geq i_0} c_i \, A_{[i]}$, for $A\in V$, $c_i\in\mathbb{C}$, and $i_0\in \mathbb{Z}$, where $A_{[i]}$ denotes the projection in $\mathfrak{L}(V)$ of $A\otimes t^i\in V\otimes \mathbb{C}(\!(t)\!)$. The Lie bracket of $\mathfrak{L}(V)$ is induced by
\begin{equation}
\label{bracket}
\left[A_{[i]}, B_{[j]} \right] := \sum_{k\geq 0} {i \choose k} \left(A_{(k)}\cdot B \right)_{[i+j-k]}.
\end{equation}
The axiom on the vacuum vector $\bm{1}^V$ implies that $\bm{1}^V _{[-1]}$ is central. The Lie algebra $\mathfrak{L}(V)$ is isomorphic to the current Lie algebra in \cite{nt}. For $t$ we will use a formal coordinate at a point $P$ on a curve, and we will denote $\mathfrak{L}_P(V)=\mathfrak{L}_t(V)$. A coordinate-free description of $\mathfrak{L}_t(V)$ is discussed in \S\ref{coordfreeLV}.

\subsection{The universal enveloping algebra} 
\label{sec:UV}
For a vertex algebra $V$, there is a complete topological associative algebra $\mathscr{U}\!(V)$, defined originally by Frenkel and Zhu \cite{FrenkelZhu}.  We review it here following the presentation in \cite{bzf}. Consider the universal enveloping algebra $U(\mathfrak{L}(V))$ of $\mathfrak{L}(V)$, and its completion
\[
\widetilde{U}(\mathfrak{L}(V)):=\varprojlim_N \ U(\mathfrak{L}(V))/I_N,
\]
where $I_N$ is the left ideal generated by $A_{[i]}$, for homogeneous $A \in V$ and $i \geq N+\deg(A)$.
The \textit{universal enveloping algebra} $\mathscr{U}\!(V)$ of $V$ is defined as the quotient of $\widetilde{U}(\mathfrak{L}(V))$ by the two-sided ideal generated by the Fourier coefficients of the series
 \[
 Y\left[A_{(-1)}B, z\right]\ -\ :\!Y[A,z] \ Y[B,z] \! :,  \quad \mbox{for all } A, \ B \in V,
 \]
where $Y[A,z]=\sum_{i\in \mathbb{Z}}A_{[i]}z^{-i-1}$ and  ``$: \quad :$'' is the normal ordering (see \cite{Monster}).

For an affine vertex algebra $V=V_{\ell}(\mathfrak{g})$, one has $\mathscr{U}\!(V)$ is isomorphic to a completion $\widetilde{U}_\ell(\widehat{\mathfrak{g}})$ of  $U_\ell(\widehat{\mathfrak{g}})$, for all  $\ell\in\mathbb{C}$. Here, $U_\ell(\widehat{\mathfrak{g}})$ is the quotient of $U(\widehat{\mathfrak{g}})$ by the two-sided ideal generated by $K-\ell$, where $K\in \widehat{\mathfrak{g}}/\mathfrak{g}\otimes \mathbb{C}(\!(t)\!)$, and 
\[
\widetilde{U}_\ell(\widehat{\mathfrak{g}}) := \varprojlim_N U_\ell(\widehat{\mathfrak{g}}) / U_\ell(\widehat{\mathfrak{g}}) \cdot \mathfrak{g}\otimes t^N\mathbb{C}\llbracket t \rrbracket.
\]
For more detail and other examples see \cite[\S 4.3.2, \S 5.1.8]{bzf}.

\subsection{Action on $V$-modules}\label{sec:VmodAVmod}
Both  $\mathfrak{L}(V)$ and $\mathscr{U}\!(V)$ act on any $V$-module $M$ via the Lie algebra homomorphism $\mathfrak{L}(V)\rightarrow \textrm{End}(M)$ obtained by mapping $A_{[i]}$ to the Fourier coefficient $A_{(i)}$ of the vertex operator $Y^M(A,z)=\sum_i A_{(i)}z^{-i-1}$.  Thus, the series $\sum_{i\geq i_0} c_i \, A_{[i]}$ acts on a $V$-module $M$ via
\begin{equation*}
\label{action}
\textrm{Res}_{z=0}\; Y^M(A,z)\sum_{i\geq i_0} c_i z^i dz.
\end{equation*}

An $\mathfrak{L}(V)$-module need not be a $V$-module. On the other hand, there is an equivalence between the categories of weak $V$-modules and smooth $\mathscr{U}\!(V)$-modules (see \cite[\S 5.1.6]{bzf}). A $\mathscr{U}\!(V)$-module $M$ is \textit{smooth} if for any $w\in M$ and $A\in V$, one has $A_{[i]}w=0$ for $i \gg 0$.   

A $V$-module is \textit{finitely generated} if it is finitely generated as a $\mathscr{U}(V)$-module.
The modules  in \cite{nt} are also finitely generated and admissible.

\subsection{Correspondence between $V$-modules and $A(V)$-modules} \label{sec:irrVAVmod}

A $V$-module $W$ is \textit{irreducible}, or \textit{simple},  if it is non zero and it has no sub-representation other than the trivial representation $0$ and $W$ itself. We review here Zhu's associative algebra $A(V)$, and the one-to-one correspondence between isomorphism classes of finite-dimensional simple $A(V)$-modules and isomorphism classes of simple $V$-modules \cite{zhu}. 

\textit{Zhu's algebra} is the quotient $A(V):=V/O(V)$, where $O(V)$ is the subspace of $V$ linearly spanned by elements of the form
\[
\textrm{Res}_{z=0}\,\frac{(1+z)^{\deg A}}{z^2}Y(A,z)B,
\]
where $A$ is homogeneous in $V$. The image of an element $A \in V$ in $A(V)$ is denoted by $o(A)$. The product in $A(V)$ is defined by
\[
o(A)*o(B)= \textrm{Res}_{z=0}\,\frac{(1+z)^{\deg A}}{z}Y(A,z)B,
\]
for homogeneous $A$ in $V$.  Nagatomo and Tsuchiya  \cite{nt} consider an  isomorphic copy of $A(V)$, which they refer to as the {\em{zero-mode algebra}}. 

Given a  $V$-module $W=\bigoplus_{i\geq 0}\, W_i$, one has that $W_0$ is an $A(V)$-module \cite[Thm 2.2.2]{zhu}. The action of $A(V)$ on $W_0$ is defined as follows: an element $o(A) \in A(V)$, image of a homogeneous element $A\in V$, acts on $W_0$ as the endomorphism $A_{(\deg A -1)}$, a Fourier coefficient of $Y^W(A,z)$.  

For the other direction, recall the triangular decomposition: 
\begin{equation}
\label{eq:triangdecLV}
\mathfrak{L}(V)= \mathfrak{L}(V)_{<0} \oplus \mathfrak{L}(V)_0 \oplus \mathfrak{L}(V)_{>0},
\end{equation}
determined by the degree  $\deg(A_{[i]}):=\deg(A)-i-1$, for homogeneous $A\in V$. From the  Lie bracket \eqref{bracket} of $\mathfrak{L}(V)$, one checks that each summand above is a Lie subalgebra of $\mathfrak{L}(V)$. This induces a subalgebra $\mathscr{U}\!(V)_{\leq 0}$ of $\mathscr{U}\!(V)$.

Given a finite-dimensional $A(V)$-module $E$, the \textit{generalized Verma $\mathscr{U}\!(V)$-module}  is 
\[
M(E):= \mathscr{U}\!(V)\otimes_{\mathscr{U}\!(V)_{\leq 0}} E.
\]
To make $E$ into an $\mathscr{U}\!(V)_{\leq 0}$-module, one lets  $\mathfrak{L}(V)_{<0}$ act trivially on $E$, and  $\mathfrak{L}(V)_{0}$ act by the homomorphism of Lie algebras $\mathfrak{L}(V)_{0} \twoheadrightarrow A(V)_{\rm Lie}$ induced by the identity endomorphism of $V$ \cite[Lemma 3.2.1]{lithesis}. For homogeneous $A\in V_k$, the image of the element $A_{[k-1]}\in \mathfrak{L}(V)_{0}$ in $A(V)$ is $o(A)$. By construction, $M(E)$ is automatically a $V$-module. 

Given an \textit{irreducible} $V$-module $W=\bigoplus_{i\geq 0} \,W_i$, the space $W_0$ is a finite-dimensional \textit{irreducible} $A(V)$-module; conversely, given a finite-dimensional \textit{irreducible} $A(V)$-module $E$,  there is a unique maximal proper $V$-submodule $N(E)$ of the $V$-module $M(E)$ with $N(E)\cap E=0$ such that $L(E)=M(E)/N(E)$ is an \textit{irreducible} $V$-module \cite{zhu}.

\subsection{Rational vertex algebras}
\label{RVAS}
A vertex algebra $V$ is \textit{rational} if every finitely generated $V$-module is a direct sum of irreducible $V$-modules. A rational vertex algebra has only finitely many isomorphism classes of irreducible modules, and an irreducible module $M=\bigoplus_{i\geq 0} M_i$ over a rational vertex algebra satisfies $\dim M_i<\infty$ \cite{DongLiMasonTwisted}.

An \textit{ordinary} $V$-module is a weak $V$-module  which carries a ${\mathbb{C}}$-grading $M=\bigoplus_{\lambda\in\mathbb{C}} M_\lambda$ such that: $L_0|_{M_\lambda}=\lambda \,\mathrm{id}_{M_\lambda}$; $\dim M_\lambda <\infty$; and for fixed $\lambda$, one has $M_{\lambda+n}=0$,  for integers $n\ll 0$.  Ordinary $V$-modules are admissible, and when $V$ is rational, every simple admissible $V$-module is ordinary \cite{DongLiMasonTwisted}, \cite[Rmk 2.4]{DongLiMasonRegular}. It follows that for rational $V$, finitely generated $V$-modules are direct sums of simple ordinary $V$-modules. In particular,  a finitely generated admissible module $M=\bigoplus_{i \geq 0}\, M_i$ over a rational vertex algebra satisfies $\dim M_i<\infty$, for all $i$, and $L_0$ acts semisimply on such $M$. 

When $V$ is rational, the associative algebra $A(V)$ is semisimple \cite{zhu}. 
For a rational vertex algebra $V$, given a simple module $E$ over  Zhu's algebra $A(V)$, the Verma module $M(E)$ remains simple.   In general, Verma modules are not necessarily simple, but they are indecomposable. Hence complete reducibility   implies that simple and indecomposable coincide.

\subsection{Dual modules}
\label{ContraMod}
Isomorphism classes of simple $V$-modules and simple $A(V)$-modules are in correspondence (\S \ref{sec:VmodAVmod}). Here we describe the $V$-module (and corresponding $A(V)$-module) structures on their duals.
\subsubsection{Duality for $V$-modules}
Let $V$ be a vertex algebra. Following~\cite[\S5.2]{fhl}, given a $V$-module $\left(M = \oplus_{i\geq 0} M_i, Y^M(-,z)\right)$,  its \textit{contragredient module} is $\left( M',  Y^{M'}(-,z)\right)$, where $M'$ is the graded dual of $M$, that is, $M':=\oplus_{i\geq 0} M_i^\vee$, with $M^\vee_i:=\textrm{Hom}_{\mathbb{C}}(M_i,\mathbb{C})$, and
 \[
 Y^{M'}(-,z) \colon V \to \mathrm{End} \left(M' \right) \left\llbracket z, z^{-1}\right\rrbracket
 \]
is the unique linear map determined by
\begin{equation}
 \label{eq:contr} 
 \left\langle Y^{M'}(A,z)\psi, m \right\rangle = \left\langle \psi, Y^M\left(e^{zL_1}(-z^{-2})^{L_0}A, z^{-1}\right)m \right\rangle
\end{equation}
for $A\in V$, $\psi\in M'$, and $m\in M$. Here and throughout, $\langle \cdot, \cdot \rangle$ is the natural pairing between a vector space and its graded dual.

\subsubsection{Duality for $A(V)$-modules}
For a $V$-module $M$, the $A(V)$-module structure on $M^\vee_0$ requires  the involution $\vartheta \colon \LV^f \to \LV^f$ induced from
\begin{equation} 
\label{eq:iota} 
\vartheta\left(A_{[j]}\right) := (-1)^{a-1}\sum_{i\geq 0} \frac{1}{i!} \left(L_1^i A\right)_{[2a -j- i -2]}
\end{equation}
for a homogeneous element $A \in V$ of degree $a$. Here $\LV^f \subset \LV$ denotes the quotient of $V \otimes_\CC \CC[t, t^{-1}]$ by $\mathrm{Im}\,\partial$. Observe that since the operator $L_1$ has negative degree, the above sum is finite, and that $\vartheta$ is a Lie algebra homomorphism \cite[Prop 4.1.1]{nt} (note the sign  difference between $\vartheta$ used here and the one in \cite{nt}). This involution  appeared in \cite{Borcherds}, and it naturally arises from the action of the vertex operators on the contragredient module $V'$. Since $\vartheta$ restricts to an involution of $\LV_0$ leaving $O(V)$ invariant, it induces an involution on Zhu's algebra $A(V)$. The following statement is a direct consequence of the definition of contragredient modules.

\begin{lemma} 
\label{lem:contr} 
\begin{enumerate}[i)]
\item The image of $\psi \in M_0^\vee$ under the action of $\sigma\in \mathfrak{L}(V)_0$ 
is the linear functional 
\[
\sigma \cdot \psi = - \psi \circ \vartheta(\sigma).
\]
\item The image of $\psi \in M_0^\vee$ under the action of $o(A)\in A(V)$ is 
\begin{equation*} 
\label{eq:contraction2} o(A) \cdot \psi = - \psi \circ \vartheta(o(A)).
\end{equation*} 
For homogeneous $A \in V$ of degree $a$ and for  $m \in M_0$, this is
\[ 
\left\langle o(A) \cdot \psi , m \right\rangle = (-1)^{a}\left\langle \psi, \sum_{i \geq 0} \dfrac{1}{i!}\left(L_1^i A\right)_{(a-i-1)}m\right\rangle.
\]
\end{enumerate}
\end{lemma}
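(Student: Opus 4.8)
The plan is to derive both parts directly from the defining relation \eqref{eq:contr} for the contragredient module, translated into the language of the Lie algebra $\mathfrak{L}(V)$ acting on $M'$ via residues. First I would recall that for a homogeneous $A\in V_k$, one has $(-z^{-2})^{L_0}A = (-1)^k z^{-2k}A$, so that $e^{zL_1}(-z^{-2})^{L_0}A = (-1)^k\sum_{i\geq 0}\frac{1}{i!}z^{i-2k}(L_1^i A)$, with the sum finite because $L_1$ strictly lowers degree. Substituting $z^{-1}$ for $z$ in $Y^M$ on the right-hand side of \eqref{eq:contr} and expanding $Y^M(B,z^{-1}) = \sum_j B_{(j)}z^{j+1}$, I would compute the Fourier coefficients of $Y^{M'}(A,z)$: the coefficient of $z^{-j-1}$ in $Y^{M'}(A,z)$, namely $A^{M'}_{(j)}$, is pairing-adjoint (up to a sign $(-1)^k$ and the substitution) to a finite sum of operators $(L_1^iA)_{(2k-j-i-2)}$ on $M$. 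This is exactly the combinatorial shape of the involution $\vartheta$ in \eqref{eq:iota}: one reads off that $\langle A^{M'}_{[j]}\psi, m\rangle = -\langle \psi, \vartheta(A_{[j]})\, m\rangle$, where the overall sign $-$ (rather than $(-1)^k$) comes from absorbing one factor of $(-1)$ and noting $\vartheta$ already carries the $(-1)^{k-1}$. Extending by linearity and continuity over series $\sum_{i\geq i_0}c_iA_{[i]}$ gives part (i): $\sigma\cdot\psi = -\psi\circ\vartheta(\sigma)$ for all $\sigma\in\mathfrak{L}(V)$.

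For part (ii), I would specialize (i) to the degree-zero piece. By \S\ref{sec:irrVAVmod}, the $A(V)$-action on $M_0^\vee$ is the one induced from the $\mathfrak{L}(V)_0$-action through the surjection $\mathfrak{L}(V)_0 \twoheadrightarrow A(V)_{\mathrm{Lie}}$, under which $A_{[k-1]}\mapsto o(A)$ for homogeneous $A\in V_k$. Since $\vartheta$ restricts to an involution of $\mathfrak{L}(V)_0$ preserving $O(V)$ (stated in the excerpt), it descends to the involution on $A(V)$ also denoted $\vartheta$, and the formula $o(A)\cdot\psi = -\psi\circ\vartheta(o(A))$ follows immediately from part (i) applied to $\sigma = A_{[k-1]}$. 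To get the explicit formula, I substitute $j = k-1$ into \eqref{eq:iota}: $\vartheta(A_{[k-1]}) = (-1)^{k-1}\sum_{i\geq 0}\frac{1}{i!}L_1^i A_{[2k - (k-1) - i - 2]} = (-1)^{k-1}\sum_{i\geq 0}\frac{1}{i!}(L_1^iA)_{[k-i-1]}$. Note that $L_1^iA\in V_{k-i}$, so $(L_1^iA)_{[k-i-1]}$ maps to $o(L_1^iA)$ in $A(V)$, and on $M_0$ it acts as the Fourier coefficient $(L_1^iA)_{(k-i-1)}$. Hence $\langle o(A)\cdot\psi, m\rangle = -\langle\psi, \vartheta(A_{[k-1]})\,m\rangle = -(-1)^{k-1}\langle\psi, \sum_{i\geq 0}\frac{1}{i!}(L_1^iA)_{(k-i-1)}m\rangle = (-1)^k\langle\psi, \sum_{i\geq 0}\frac{1}{i!}(L_1^iA)_{(k-i-1)}m\rangle$, which is the claimed identity.

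I expect the main obstacle to be purely bookkeeping: carefully matching the powers of $z$ and the signs coming from the three sources $e^{zL_1}$, $(-z^{-2})^{L_0}$, and the substitution $z\mapsto z^{-1}$, and checking that the index shift produced by extracting the coefficient of $z^{-j-1}$ lands precisely on $2k-j-i-2$ as in \eqref{eq:iota}. A secondary point requiring a line of justification is the passage from the module-level identity to the $A(V)$-level identity — specifically that the $\mathfrak{L}(V)_0$-action on $M_0^\vee$ obtained from part (i) genuinely factors through $A(V)$ and agrees with the $A(V)$-module structure on $M_0^\vee$ fixed in \S\ref{ContraMod}; this is where one invokes that $\vartheta$ preserves $O(V)$ and is compatible with the projection $\mathfrak{L}(V)_0\to A(V)_{\mathrm{Lie}}$. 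Everything else is a direct unwinding of definitions, so the lemma is, as the text says, "a direct consequence of the definition of contragredient modules."
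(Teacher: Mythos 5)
Your proposal is correct and is precisely the "direct consequence of the definition of contragredient modules" that the paper invokes: the paper gives no further proof of Lemma \ref{lem:contr}, and your unwinding of \eqref{eq:contr} — computing $e^{zL_1}(-z^{-2})^{L_0}A=(-1)^kz^{-2k}\sum_{i\ge0}\tfrac{1}{i!}z^iL_1^iA$, substituting $z\mapsto z^{-1}$, and matching the coefficient of $z^{-j-1}$ against the index $2k-j-i-2$ in \eqref{eq:iota} — is exactly the intended calculation, with the signs and the specialization to $j=k-1$ for part (ii) handled correctly.
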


See \cite[\S 10.4.8]{bzf} for a  geometric realization of the involution $\vartheta$. In \S \ref{Chiral}, we use $\vartheta$ to describe chiral Lie algebras on nodal curves.

\subsection{Stable $k$-differentials}
Let $(C,P_\bullet)$ be a stable $n$-pointed curve with at least one node, and  let $\omega_C$ be the dualizing sheaf on $C$. We review here \textit{(stable) $k$-differentials} on~$C$, that is, sections of $\omega_C^{\otimes k}$, for an integer $k$. When $k\geq 1$, by $\omega_C^{\otimes -k}$ we mean $(\omega_C^\vee)^{\otimes k}$.

Let $\Ct\to C$ be the partial normalization of~$C$ at a node $Q$,  let $\Qp, \Qm  \in \Ct$ be the two preimages of $Q$, and set $Q_{\bullet}=(\Qp, \Qm)$.  Note that the curve $\Ct$ may not be connected. Let $\tQp$ and $\tQm$ be formal coordinates at the points $\Qp$ and $\Qm$, respectively. We write $\Qpm$ to denote either point, and similarly use $s_{\pm}$ to denote either formal coordinate. For a section 
\[
\mu\in \text{H}^0\left(\Ct\setminus P_\bullet\sqcup Q_\bullet, \omega_{\Ct}^{\otimes k}\right)=:\widetilde{\Gamma},
\] 
let $\mu_{\Qpm}\in \mathbb{C}(\!( s_\pm)\!)(ds_\pm)^k$ be the Laurent series expansion of $\mu$ at $\Qpm$, that is, the image of $\mu$ under the restriction morphism
\[
\text{H}^0\left(\Ct\setminus P_\bullet\sqcup Q_\bullet, \omega_{\Ct}^{\otimes k}\right) \rightarrow \text{H}^0\left(D^\times_{\Qpm}, \omega_{\Ct}^{\otimes k}\right)\simeq_{\tQpm} \mathbb{C}(\!(\tQpm)\!)(ds_\pm)^k.
\]
Here $D^\times_{\Qpm}$ is the punctured formal disk about $\Qpm$, that is,  the spectrum of the field of fractions of the completed local ring $\widehat{\mathscr{O}}_{\Qpm}$, and $\simeq_{\tQpm}$ denotes the isomorphism given by fixing the formal coordinate $s_\pm$ at $\Qpm$. 

For a  $k$-differential $\mu$, define the \textit{order} $\textup{ord}_{\Qpm}\left(\mu\right)$ of $\mu$ at the point $\Qpm$ as the highest integer $m$ such that $\mu_{\Qpm}\in s_{\pm}^m\mathbb{C}\llbracket s_\pm\rrbracket (ds_\pm)^k$. For a  $k$-differential $\mu$ with $\textup{ord}_{\Qpm}\left(\mu\right) \geq -k$, the \textit{$k$-residue} $\textup{Res}^k_{\Qpm} \left(\mu\right)$ of $\mu$ at the point $\Qpm$ is defined as the coefficient of $s_\pm^{-k}(ds_\pm)^k$ in $\mu_{\Qpm}$. 

The order and the $k$-residue at $\Qpm$ are independent of the formal coordinate $s_\pm$ at $\Qpm$.  Here we only require the case $\textup{ord}_{\Qpm}\left(\mu\right) \geq -k$. For the definition of the $k$-residue without the assumption $\textup{ord}_{\Qpm}\left(\mu\right) \geq -k$, see e.g., \cite{bcggm}.

\begin{lemma}
\label{lem:omegaotimesm}
Assume that $C\setminus P_\bullet$ is affine. For all integers $k$, one has 
\[
\eta^*\,\mathrm{H}^0\left(C\setminus P_\bullet, \omega_C^{\otimes k}\right) \\
= \left\{ \mu \in \widetilde{\Gamma} \; \Bigg| \,  
\begin{array}{l}
\textup{ord}_{\Qpm}\left(\mu\right) \geq -k, \\[0.2cm]
\textup{Res}^k_{\Qp} \left(\mu\right) = (-1)^{k} \,\textup{Res}^k_{\Qm} \left(\mu\right)
\end{array}
\right\}.
\]
\end{lemma}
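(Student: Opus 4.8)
The plan is to compare both sides through their expansions at the node, exploiting that $\widetilde{C}\to C$ differs only by the gluing of $\Qp$ to $\Qm$. First I would set up the exact sequence relating $\omega_C$ and $\omega_{\widetilde{C}}$: pulling back along $\nu\colon\widetilde C\to C$ one has $\nu^*\omega_C\cong\omega_{\widetilde C}(\Qp+\Qm)$, and a section of $\omega_C^{\otimes k}$ on $C\setminus P_\bullet$ is the same as a section of $\omega_{\widetilde C}^{\otimes k}$ on $\widetilde C\setminus P_\bullet$ that acquires poles of order at most $k$ at $\Qp$ and $\Qm$ (this gives the two order conditions $\textup{ord}_{\Qpm}(\mu)\geq -k$ for free) \emph{and} whose germs at $\Qp$ and $\Qm$ match up under the identification of the two formal disks dictated by the node. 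The content of the lemma is that this last matching condition is exactly the $k$-residue identity $\textup{Res}^k_{\Qp}(\mu)=(-1)^k\textup{Res}^k_{\Qm}(\mu)$.

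Concretely, I would argue as follows. Since $C\setminus P_\bullet$ is affine, $H^0(C\setminus P_\bullet,\omega_C^{\otimes k})$ is computed locally: a section is determined by a section of $\omega_{\widetilde C}^{\otimes k}$ on the affine $\widetilde C\setminus P_\bullet\sqcup Q_\bullet$ together with the requirement that it extends across the node. I would work in the complete local ring of $C$ at $Q$, namely $\CC\llbracket x,y\rrbracket/(xy)$, whose normalization is $\CC\llbracket \tQp\rrbracket\times\CC\llbracket \tQm\rrbracket$ with $x\mapsto(\tQp,0)$, $y\mapsto(0,\tQm)$. The dualizing sheaf $\omega_C$ at the node is generated by the element that pulls back to $\tfrac{d\tQp}{\tQp}$ on the $\Qp$-branch and to $-\tfrac{d\tQm}{\tQm}$ on the $\Qm$-branch — this sign is the origin of the $(-1)^k$. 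Hence a local generator of $\omega_C^{\otimes k}$ pulls back to $\big(\tfrac{d\tQp}{\tQp}\big)^{\!k}$ on one branch and to $(-1)^k\big(\tfrac{d\tQm}{\tQm}\big)^{\!k}$ on the other. A $k$-differential $\mu\in\widetilde\Gamma$ lies in the image of $H^0(C\setminus P_\bullet,\omega_C^{\otimes k})$ if and only if $\mu_{\Qp}/(d\tQp/\tQp)^k$ and $\mu_{\Qm}/\big((-1)^k(d\tQm/\tQm)^k\big)$ both lie in the respective $\CC\llbracket\cdot\rrbracket$ and have the \emph{same constant term} (the value at the node). Writing $\mu_{\Qpm}=\sum_{m\geq -k} a^{\pm}_m s_{\pm}^m (ds_\pm)^k$, the condition $\textup{ord}_{\Qpm}(\mu)\geq -k$ says exactly that $\mu_{\Qpm}/(d\tQpm/\tQpm)^k\in\CC\llbracket s_\pm\rrbracket$, and its constant term is $a^{\pm}_{-k}=\textup{Res}^k_{\Qpm}(\mu)$; so the "same value at the node" condition becomes $a^+_{-k}=(-1)^k a^-_{-k}$, which is the asserted $k$-residue identity. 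Conversely, given $\mu\in\widetilde\Gamma$ satisfying the three conditions, gluing its germs produces a section of $\omega_C^{\otimes k}$ over a neighborhood of $Q$ that agrees with $\mu$ away from $Q$, hence a global section on $C\setminus P_\bullet$.

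The step I expect to require the most care is pinning down the sign: identifying the canonical local generator of $\omega_C$ at the node and verifying it restricts to $d\tQp/\tQp$ on one branch and $-d\tQm/\tQm$ on the other. This is the standard description of the dualizing sheaf of a nodal curve (the adjunction/residue pairing forces residues at the two preimages to be opposite), but I would state it carefully and note that both the order and the $k$-residue are coordinate-independent (as recalled just before the lemma), so the glued section is well-defined irrespective of the choices of $\tQp,\tQm$. Everything else — affineness giving a local-to-global criterion, and the bookkeeping of Laurent coefficients — is routine once the local picture at $Q$ is fixed.
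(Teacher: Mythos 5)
Your proof is correct, but it is organized differently from the paper's. The paper reduces the statement to the three base cases $k\in\{-1,0,1\}$, using that on the affine open $C\setminus P_\bullet$ sections of $\omega_C^{\otimes k}$ are sums of tensor products of sections of $\omega_C^{\pm 1}$ and that orders and $k$-residues behave multiplicatively on leading coefficients; the base cases are then read off from the definitions (Rosenlicht differentials for $k=1$, regular functions for $k=0$, and duality for $k=-1$). You instead treat all $k$ uniformly by exhibiting the local generator of $\omega_C$ at the node, which pulls back to $d\tQp/\tQp$ on one branch and $-d\tQm/\tQm$ on the other, so that $\omega_C^{\otimes k}$ is locally $\mathscr{O}_{C,Q}\cdot(d\tQp/\tQp)^{\otimes k}$ and membership reduces to the two series $\mu_{\Qp}/(d\tQp/\tQp)^k$ and $(-1)^{-k}\mu_{\Qm}/(d\tQm/\tQm)^k$ being regular with equal constant terms; since those constant terms are exactly $\textup{Res}^k_{\Qp}(\mu)$ and $(-1)^k\textup{Res}^k_{\Qm}(\mu)$, the sign falls out for every $k$ at once. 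Your route makes the origin of the $(-1)^k$ more transparent and avoids having to justify that the residue condition is compatible with taking tensor products, at the cost of invoking the local structure $\mathscr{O}_{C,Q}\cong\mathbb{C}\llbracket x,y\rrbracket/(xy)$ and the standard extension argument (coherence of an invertible sheaf on the affine $C\setminus P_\bullet$ plus the formal local criterion at $Q$) to pass from germs to an honest global section; both of those are routine, so there is no gap.
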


\begin{proof} It is enough to prove the statement for $k\in \{-1,0,1\}$: indeed, for negative integers $k$, sections of $\omega_{C}^{\otimes k}$ on the affine $C\setminus P_\bullet$ are tensor products of sections of $\omega_{C}^{-1}$, and the Laurent series expansions of sections of $\omega_{\Ct}^{\otimes k}$ at $\Qp$ and $\Qm$ are obtained as tensors of the Laurent series expansions of sections of $\omega_{\Ct}^{-1}$ at $\Qp$ and $\Qm$, respectively. An analogous argument may be made in case $k$ is a positive integer.

When $k=1$, the statement is about sections of $\omega_C$, and by definition sections of $\omega_C$ are sections of $\omega_{\Ct}$ with at most simple poles at $\Qp$ and $\Qm$ such that $\textrm{Res}_{\Qp}(\mu)+\textrm{Res}_{\Qm}(\mu)=0$. When $k=0$, the statement is about sections of $\mathscr{O}_C$, and indeed a regular function on $C$ is a regular function $\mu$ on $\Ct$ such that $\mu(\Qp)=\mu(\Qm)$. When $k=-1$, by definition we have $\omega_C^{-1}=\mathscr{H}\!\text{om}_{\mathscr{O}_C}(\omega_C, \mathscr{O}_C)$, and the statement follows from a direct computation using the cases $k\in \{0, 1\}$.
\end{proof}

We have used the notation $\eta^*$ to denote that the identification of elements of $\text{H}^0\left(C\setminus P_\bullet, \omega_C^{\otimes k}\right)$ with elements of  $\widetilde{\Gamma}$ is naturally induced by the map $\eta$. We will use the same notation in the statement of Proposition \ref{prop:chiralnodal}.

\subsection{A consequence of Riemann-Roch}
\label{sec:RR}
We will frequently use the following corollary of the Riemann-Roch theorem. Let $C$ be a smooth curve, possibly disconnected, with two non-empty sets of distinct marked points $P_\bullet=(P_1,\dots,P_n)$ and $Q_\bullet=(Q_1,\dots,Q_m)$. Assume that each irreducible component of $C$ contains at least one of the marked points $P_\bullet$, so that  $C\setminus P_\bullet$ is affine. Let $s_i$ be a formal coordinate at the point $Q_i$, for each $i$. Fix an integer $k$. For all integers $d$ and $N$, there exists $\mu\in \text{H}^0\left(C\setminus P_\bullet\sqcup Q_\bullet, \omega_C^{\otimes k}\right)$ such that its Laurent series expansions at the points $Q_\bullet$ satisfy:
\begin{align*}
\mu_{Q_i} &\equiv s_{i}^d (ds_i)^k  &&  \in && \mathbb{C}(\!( s_{i} )\!)(ds_i)^k /s_{i}^N\mathbb{C}\llbracket s_{i} \rrbracket (ds_i)^k , &&\mbox{for a fixed } i,\\
\mu_{Q_j} & \equiv 0  && \in && \mathbb{C}(\!( s_{j} )\!)(ds_j)^k /s_{j}^N \mathbb{C}\llbracket s_{j} \rrbracket (ds_j)^k, &&\mbox{for all } j\not= i.
\end{align*}
This statement has appeared for instance in \cite{ZhuGlobal}.


\section{Sheaves of vertex algebras on  curves}
\label{sec:VAstable}

We describe  the sheaf of vertex algebras $\Vv_C$ on a curve $C$ with (at worst) simple nodal singularities in \S\ref{VSheaf}, and its flat connection in \S\ref{FlatConnection}.  This yields  a coordinate-free description of the Lie algebra ancillary to $V$ in \S\ref{coordfreeLV}.

\subsection{The group scheme $\mathrm{Aut}\,\mathcal{O}$}
\label{AutO}
Consider the functor which assigns to a $\mathbb{C}$-algebra $R$ the group:
\[
\textrm{Aut}\,\mathcal{O}(R) = \left\{z \mapsto \rho(z)= a_1 z + a_2 z^2 + \cdots \, | \, a_i \in R, \, a_1 \textrm{ a unit}  \right\}
\]
of continuous automorphisms of the algebra $R\llbracket z \rrbracket$ preserving the ideal $zR\llbracket z \rrbracket$. The  group law  is given by composition of series: $\rho_1 \cdot \rho_2 := \rho_2\circ \rho_1$. This functor is represented by a group scheme, denoted $\textrm{Aut}\,\mathcal{O}$.  

\medskip

To construct the  sheaf of vertex algebras $\Vv_C$ on a stable curve $C$, we describe below the principal $(\textrm{Aut}\,\mathcal{O})$-bundle $\mathscr{A}ut_C \rightarrow C$, and actions of $\textrm{Aut}\,\mathcal{O}$ on the vertex operator algebra $V$ and on $\mathscr{A}ut_C \times V$.

\subsection{Coordinatized curves} 
Given a flat family \mbox{$C \to S$} of stable curves of genus $g$,  we construct an $(\textrm{Aut}\,\mathcal{O})$-torsor $\mathscr{A}ut_{C/S} \to C/S$ in \S \ref{blowupmu}. This torsor is  pulled back from a universal $(\textrm{Aut}\,\mathcal{O})$-torsor $\widetriangle{\mathcal{M}}_{g,1} \to \overline{\mathcal{M}}_{g,1}$. To define these objects, we begin in \S \ref{FixedCurve} with $S=\mathrm{Spec}(\mathbb{C})$.

\subsubsection{The principal $(\mathrm{Aut}\,\mathcal{O})$-bundle $\mathscr{A}ut_C$ at a fixed curve $C$}\label{FixedCurve} Assume first that $C$ is a \textit{smooth} curve.  Let $\mathscr{A}ut_C$ be the infinite-dimensional smooth variety whose points consist of pairs $(P,t)$, where $P$ is a point in $C$, and $t$ is a formal coordinate at $P$ (see \cite{adkp}).  A formal coordinate $t$ at $P$ is an element of the completed local ring $\widehat{\mathscr{O}}_P$ such that  $t\in \mathfrak{m}_P\setminus \mathfrak{m}^2_P$, where $\mathfrak{m}_P$ is the maximal ideal of $\widehat{\mathscr{O}}_P$. There is a natural forgetful map $\mathscr{A}ut_C\rightarrow C$, with fiber  at a  point $P\in C$ equal to the set of formal coordinates at $P$:
\[
\mathscr{A}ut_P =\left\{t \in \widehat{\mathscr{O}}_P \,\, | \,\, t\in \mathfrak{m}_P\setminus \mathfrak{m}^2_P \right\}.
\]
The group scheme $\textrm{Aut}\,\mathcal{O}$ acts on fibers of $\mathscr{A}ut_C \rightarrow C$ by change of coordinates: 
\[
\mathscr{A}ut_C \times \textrm{Aut}\,\mathcal{O} \rightarrow \mathscr{A}ut_C, \qquad \left((P,t),\rho\right) \mapsto \left(P, t\cdot \rho := \rho(t)\right).
\]
This action is simply transitive, thus $\mathscr{A}ut_C$ is a  principal $(\textrm{Aut}\,\mathcal{O})$-bundle on~$C$. The choice of a formal coordinate $t$ at $P$ gives rise to the trivialization
\[
\mathrm{Aut}\,\mathcal{O}\xrightarrow{\simeq_t}\mathscr{A}ut_P, \qquad \rho\mapsto \rho(t).
\]

For a \textit{nodal} curve $C$,  let $\Ct\rightarrow C$ denote the normalization of $C$. Assume for simplicity that $C$ has only one node $Q$, and let $\Qp$ and $\Qm$ be the two preimages in $\Ct$ of  $Q$. A choice of formal coordinates $s_+$ and $s_-$ at $\Qp$ and $\Qm$, respectively, determines a smoothing of the nodal curve $C$ over $\mathrm{Spec}(\mathbb{C}\llbracket q\rrbracket)$ such that,  locally around the point $Q$ in $C$, the family is defined by $s_+ s_- = q$ (see \S\ref{smoothings} for more details). A principal $(\mathrm{Aut}\,\mathcal{O})$-bundle on a nodal curve is equivalent to the datum of a principal $(\mathrm{Aut}\,\mathcal{O})$-bundle on its normalization together with a gluing isomorphism between the fibers over the two preimages of each node. In particular, one constructs the principal $(\mathrm{Aut}\,\mathcal{O})$-bundle $\mathscr{A}ut_C$ on $C$ from the principal $(\mathrm{Aut}\,\mathcal{O})$-bundle $\mathscr{A}ut_{\Ct}$ on $\Ct$ by identifying the fibers at the two preimages $\Qp$ and $\Qm$ of the node $Q$ in $C$ by the following gluing isomorphism:
\begin{equation}
\label{eq:glisomAut}
\mathscr{A}ut_{\Qp} \simeq_{s_+} \mathrm{Aut}\,\mathcal{O}  \xrightarrow{\cong} \mathrm{Aut}\,\mathcal{O} \simeq_{s_-} \mathscr{A}ut_{\Qm},
\qquad  \rho(s_+)\mapsto \rho \circ \gamma (s_-),
\end{equation}
where $\gamma\in \mathrm{Aut}\,\mathcal{O}$ is defined as
\begin{equation}
\label{eq:muz}
\gamma(z) := \frac{1}{1+z}-1 = -z+z^2-z^3+\cdots.
\end{equation}
Note that $(\gamma\circ\gamma)(z)=z$, hence \eqref{eq:glisomAut} determines an involution of $\mathrm{Aut}\,\mathcal{O}$. The isomorphism \eqref{eq:glisomAut} is induced from the identification $s_+=\gamma(s_-)$.

\subsubsection{$\mathscr{A}ut_{C/S}$ on a family $C\to S$ and the moduli space $\widetriangle{\mathcal{M}}_{g,1}$}
\label{blowupmu}
The above construction of $\mathscr{A}ut_C$ can be carried out in families, and for a flat family $C \rightarrow S$ of stable curves, one thus obtains a principal $(\mathrm{Aut}\,\mathcal{O})$-bundle $\mathscr{A}ut_{C/S}\rightarrow C/S$. This determines a principal $(\mathrm{Aut}\,\mathcal{O})$-bundle \mbox{$\widetriangle{\mathcal{C}}_{g}\rightarrow \overline{\mathcal{C}}_g$,} where $\overline{\mathcal{C}}_g\rightarrow \overline{\mathcal{M}}_g$ is the universal curve over the moduli space of stable curves of genus $g$. One has the following diagram made of fiber product squares:
\[
\begin{tikzcd}
\mathscr{A}ut_{C/S} \arrow[rightarrow]{r} \arrow[rightarrow]{d}& \widetriangle{\mathcal{C}}_{g} \arrow[rightarrow]{d}{\textrm{Aut}\,\mathcal{O}} \\
C \arrow[rightarrow]{r} \arrow[rightarrow]{d}& \overline{\mathcal{C}}_{g} \arrow[rightarrow]{d}\\
S \arrow[rightarrow]{r}{}& 	\overline{\mathcal{M}}_{g}.
\end{tikzcd}
\]

It can be convenient to identify the universal curve $\overline{\mathcal{C}}_g$ with the moduli space of stable pointed curves $\overline{\mathcal{M}}_{g,1}$. This leads us to consider a principal $(\mathrm{Aut}\,\mathcal{O})$-bundle $\widetriangle{\mathcal{M}}_{g,1}\rightarrow \overline{\mathcal{M}}_{g,1}$  identified with $\widetriangle{\mathcal{C}}_{g}\rightarrow\overline{\mathcal{C}}_g$. Namely:

\begin{defn}
Let  $\widetriangle{\mathcal{M}}_{g,1}$ be the moduli space of {\em{coordinatized stable pointed curves}}. An object over a scheme $S$ consists of a
semistable curve $C \to S$ with a section $P\colon S \to C$ mapping to the smooth locus of $C$, together with a formally unramified thickening $S \times  {\rm{Spf}}(\mathbb{C}\llbracket t\rrbracket) \to C$ of the section $P$, such that every genus one  component has at least one special point and every rational component has at least three special points. 
\end{defn}

Here a special point is either a marked point or a node, counted with multiplicity. Moreover, ${\rm{Spf}}(\mathbb{C}\llbracket t\rrbracket)$ is the formal spectrum of the complete topological ring $\mathbb{C}\llbracket t\rrbracket$ \cite[\S A.1.1]{bzf}.

The action of $\mathrm{Aut}\,\mathcal{O}$ via change of coordinates identifies $\widetriangle{\mathcal{M}}_{g,1}$ as a principal $(\mathrm{Aut}\,\mathcal{O})$-bundle over $\overline{\mathcal{M}}_{g,1}$. One has  the diagram of Cartesian squares:
\[
\begin{tikzcd}
\mathscr{A}ut_{P/S} \arrow[rightarrow]{r} \arrow[rightarrow]{d} &\mathscr{A}ut_{C/S} \arrow[rightarrow]{r} \arrow[rightarrow]{d}& \widetriangle{\mathcal{M}}_{g,1} \arrow[rightarrow]{d}{\textrm{Aut}\,\mathcal{O}} \\
S \arrow[rightarrow]{r}{P} &C \arrow[rightarrow]{r} \arrow[rightarrow]{d}& \overline{\mathcal{M}}_{g,1} \arrow[rightarrow]{d}\\
&S \arrow[rightarrow]{r}{}& 	\overline{\mathcal{M}}_{g}.
\end{tikzcd}
\]
In particular, for a flat family $C\rightarrow S$  of stable curves of genus~$g$, the space $\mathscr{A}ut_{C/S}$ is the pull-back of $\widetriangle{\mathcal{M}}_{g,1}$ via the moduli map $S \rightarrow \overline{\mathcal{M}}_{g}$.

\smallskip

For a nodal curve $C$, the fiber of $\mathscr{A}ut_C$ over a node can be described as follows. Assume for simplicity that $C$ has only one node $Q$. Over the singular point $Q\in C$, the fiber $\mathscr{A}ut_Q$ can be identified with the space of formal coordinates at the point $Q'$ in $C'$, where $(C',Q')$ is formed by stabilization of the unstable pointed curve $(C,Q)$. Stabilization is carried out by blowing-up. As a result, $C'$ consists of the partial normalization $\Ct$ of $C$ at $Q$ together with a rational exceptional component meeting $\Ct$ transversally at the two preimages $\Qp$ and $\Qm\in \Ct$ of the node $Q$.  Such a rational component contains three special points: the two attaching points and a  point labelled~$Q'$. Up to isomorphism, one can identify this rational component with $\mathbb{P}^1$, with the points attached to $\Qp$ and $\Qm$ identified with $0$ and $\infty \in \mathbb{P}^1$, respectively, and the point $Q'$ identified with $1 \in \mathbb{P}^1$. The fiber $\mathscr{A}ut_Q$ of $\mathscr{A}ut_C$ over the node $Q$ in $C$ is identified with the space of formal coordinates at the point $1$ in $\mathbb{P}^1\subset C'$. 

Here are more details: Choose formal coordinates $s_+$ and $s_-$ at $\Qp$ and $\Qm$, respectively. As mentioned in the previous section, a choice of such coordinates determines a smoothing of $C$ over $\mathrm{Spec}(\mathbb{C}\llbracket q\rrbracket)$ such that,  locally around the point $Q$ in $C$, the family is defined by $s_+ s_- = q$. After blowing-up such a family at the point $Q$, locally around the two resulting nodes incident to the exceptional component $\mathbb{P}^1$, the curve $C'$ at $q=0$ is given by equations $s_+s_0=0$ and $s_\infty s_-=0$, where $s_0,s_\infty$ are formal coordinates at $0,\infty \in \mathbb{P}^1$, respectively. The formal coordinates $s_0$ and $s_\infty$ satisfy $s_0 s_\infty=1$. A pair of such coordinates $s_0,s_\infty$ induce formal coordinates $s_0-1$ and $s_\infty-1$ at $1\in \mathbb{P}^1$, with change of coordinates given precisely by $s_0-1 = \gamma(s_\infty-1)$, where $\gamma$ is as in \eqref{eq:muz}. It follows that for the nodal curve $C'$, one has the identifications $s_+ = \gamma(s_0)$, $s_- = \gamma(s_\infty)$, and $s_0-1 = \gamma(s_\infty-1)$. These identifications determine the identification of the space $\mathscr{A}ut_C$  with the fiber of the projection  $\widetriangle{\mathcal{M}}_{g,1}\rightarrow \overline{\mathcal{M}}_{g}$ over the moduli point $[C] \in 	\overline{\mathcal{M}}_{g}$.

More generally, one defines $\widetriangle{\mathcal{M}}_{g,n}$ as the moduli space  of objects of type $(C,P_\bullet, t_\bullet)$, where $(C,P_\bullet=(P_1,\dots,P_n))$ is a stable $n$-pointed genus $g$ curve, and $t_\bullet=(t_1,\dots,t_n)$ with $t_i$  a formal coordinate at $P_i$, for each $i$. The action of $(\textrm{Aut}\,\mathcal{O})^{\oplus n}$ by change of coordinates endows $\widetriangle{\mathcal{M}}_{g,n}$ with the structure of an $(\textrm{Aut}\,\mathcal{O})^{\oplus n}$-torsor over $\overline{\mathcal{M}}_{g,n}$. For further information about $\widetriangle{\mathcal{M}}_{g,n}$ over the locus of smooth curves, see  \cite{adkp} and \cite[\S 6.5]{bzf}; over  stable curves with singularities, see \cite[\S 3.2]{dgt}.

\subsection{Action of $\textup{Aut}\,\mathcal{O}$ on  $V$}
Let $\textrm{Der}_0\, \mathcal{O}$ be the Lie algebra functor attached to the group functor $\textrm{Aut}\,\mathcal{O}$ in the sense of \cite[Exp.~II \S 3,4]{sga3}. This~is
\[
\textrm{Der}_0\, \mathcal{O}(R) = R\llbracket z \rrbracket z\partial_z.
\] 
The Lie algebra $\textrm{Der}_0\, \mathcal{O}(R)$ is generated over $R$ by the Virasoro elements $L_p$, for $p\geq 0$, hence $\textrm{Der}_0\, \mathcal{O}$ is a Lie subalgebra of the Virasoro Lie algebra.   The action of the Virasoro Lie algebra on a vertex operator algebra $V$ restricts to  an action of  $\textrm{Der}_0$ on $V$. One can integrate this to obtain a left action of $\textrm{Aut}\,\mathcal{O}$ on $V$ defined as the inductive limit of the finite-dimensional submodules  $V_{\leq k}:=\bigoplus_{i\leq k}V_i$.  This follows from the fact that $L_0$ acts semi-simply with integral eigenvalues, and $L_p$ acts locally nilpotently for $p>0$ \cite[\S 6.3]{bzf}.
 
Explicitly, to compute the action on $V$ of an  an element $\rho\in\mathrm{Aut}\,\mathcal{O}$, one proceeds as follows.  The element $\rho(z)$ can be expressed as 
\[
\rho(z)=\exp\left(\sum_{i\geq 0} a_i z^{i+1}\partial_z\right)(z)
\] 
for some  $a_i\in\mathbb{C}$ (see e.g., \cite[\S6.3.1]{bzf}). Assuming $0\leq \mathrm{Im}(a_0)<2\pi$, the coefficients $a_i$ are uniquely determined. Hence, $\rho$ acts on $V$ as $\exp\left(\sum_{i\geq 0} -a_i L_i\right)$.
 
As an example and for later use,  the element $\gamma \in \textrm{Aut}\,\mathcal{O}$ from \eqref{eq:muz} can be expressed as
\begin{equation}
\label{eq:muzexp}
\gamma(z)= e^{-z^2\partial_z}(-1)^{-z\partial_z}(z).
\end{equation}
Thus, $\gamma$ acts on $V$ as $e^{L_1}(-1)^{L_0}$.
This is a special case of the computation in \cite[(10.4.3)]{bzf} and is essential to the gluing isomorphism for $\mathcal{V}_C$~below.

\subsection{Action of $\textup{Aut}\,\mathcal{O}$ on $\mathscr{A}ut_C \times V$}
The group $\textrm{Aut}\,\mathcal{O}$ has a right equivariant action on the trivial bundle $\mathscr{A}ut_C \times V\rightarrow \mathscr{A}ut_C$ defined by
\[
(P,t, A)\cdot \rho = \left(P, \rho(t), \rho^{-1}\cdot A\right),
\]
for $\rho\in \textrm{Aut}\,\mathcal{O}$ and $(P,t, A)\in \mathscr{A}ut_C \times V$.

\subsection{The sheaf of vertex algebras}
\label{VSheaf}

As we next describe,  the sheaf $\Vv_C$ of vertex algebras on a smooth curve $C$ is constructed by faithfully flat descent of an $(\textrm{Aut}\,\mathcal{O})$-equivariant sheaf along an $(\textrm{Aut}\,\mathcal{O})$-torsor, in order to remove coordinate dependence. The description of the sheaf $\Vv_C$ over a nodal curve is more complex. See \S \ref{sec:sheafofcoinariantsfinal} for the extension to families of stable curves.

Assume first that the curve $C$ is \textit{smooth}. The quotient of  $\mathscr{A}ut_C \times V$ by the action of $\textrm{Aut}\,\mathcal{O}$ descends along the map $\pi\colon \mathscr{A}ut_C \rightarrow C$ to the \textit{vertex algebra bundle} $V_C$ on $C$:
\[
\begin{tikzcd}
\mathscr{A}ut_C \times V \arrow[rightarrow]{r} \arrow[rightarrow]{d}& \mathscr{A}ut_C \times_{\textrm{Aut}\,\mathcal{O}}V =:V_C \arrow[rightarrow]{d}\\
\mathscr{A}ut_C \arrow[rightarrow]{r}{\textrm{Aut}\,\mathcal{O}}[swap]{\pi} & C.
\end{tikzcd}
\]
In $V_C$, one has identities
\begin{equation}
\label{eq:idVC}
(P,t, A) = \left(P, \rho(t), \rho^{-1}\cdot A\right),
\end{equation}
for $\rho\in \textrm{Aut}\,\mathcal{O}$ and $(P,t, A)\in \mathscr{A}ut_C \times V$.

The sheaf of sections of $V_{C}$ is the sheaf of vertex algebras:
\[
\Vv_C:=\left( V \otimes \pi_* \, \mathscr{O}_{\mathscr{A}ut_C} \right)^{\textrm{Aut}\,\mathcal{O}}.
\]
This is a locally-free  sheaf of $\mathscr{O}_C$-modules on $C$. The fiber of $\Vv_C$ at a point $P\in C$ is isomorphic to $\mathscr{A}ut_P\times_{\textrm{Aut}\,\mathcal{O}}V$. Given a formal coordinate $t$ at $P$, one has the trivialization 
\[
\mathscr{A}ut_P\times_{\textrm{Aut}\,\mathcal{O}}V \simeq_t V.
\]

For a \textit{nodal} curve $C$, the description of $\Vv_C$ is more involved. Assume for simplicity that $C$ has only one node $Q$. Let $\eta \colon \Ct \to C$ be the normalization of $C$, and let $\Qp$ and $\Qm$ in $\Ct$ be the two preimages of $Q$.  Choose formal coordinates $s_+$ and $s_-$ at $\Qp$ and $\Qm$, respectively. Denote by $\widetilde{\pi}\colon \mathscr{A}ut_{\Ct} \rightarrow \Ct$ the natural projection.  The action of $\textrm{Aut}\,\mathcal{O}$ on $V \otimes \widetilde{\pi}_* \, \mathscr{O}_{\mathscr{A}ut_{\Ct}}$ restricts to an action of $\textrm{Aut}\,\mathcal{O}$ on
\[
\bigoplus_{k\geq 0}\,  V_k  \otimes  \mathscr{O}_{\Ct}\left(-kQ_+ -kQ_- \right)  \otimes  \widetilde{\pi}_*  \,\mathscr{O}_{\mathscr{A}ut_{\Ct}}.
\]
Consider  the sheaf
\begin{equation}
\label{eq:VprimeCtilde}
\widetilde{\Vv} :=\left( \bigoplus_{k\geq 0}\,  V_k  \otimes  \mathscr{O}_{\Ct}\left(-kQ_+ -kQ_- \right)  \otimes  \widetilde{\pi}_*  \,\mathscr{O}_{\mathscr{A}ut_{\Ct}}  \right)^{\textrm{Aut}\,\mathcal{O}}. 
\end{equation}
The sheaf $\mathcal{V}_C$ is realized as a subsheaf of $\eta_* (\widetilde{\mathcal{V}})$ which coincides with $\eta_*(\widetilde{\Vv})$ on $C\setminus Q$. To describe its fiber over $Q$, we  use the involution \mbox{$\gamma\in \textup{Aut}\,\mathcal{O}$} from \eqref{eq:muz} and \eqref{eq:muzexp}, hence for homogeneous $A \in V$ of degree $a$, one has
\[
\gamma(A)= e^{L_1}(-1)^{L_0}A=(-1)^{a}\sum_{i \geq 0} \dfrac{1}{i!} L_1^i(A).
\] 
The fiber of $\Vv_C$ over $Q$ is obtained by identifying the fibers of $\widetilde{\mathcal{V}}$ at $Q_+$ and $Q_-$  via the gluing isomorphism induced by  $\gamma$ as in the diagram below:
\[
\begin{tikzcd}
V \arrow{r}{\gamma}& V \arrow{d}[]{\cong} \\
\underset{k\geq 0}{\bigoplus} V_k \otimes_\mathbb{C} s_+^{k} \arrow{u}{\cong} & \underset{k\geq 0}{\bigoplus} V_k \otimes_\mathbb{C} s_-^{k} \arrow{d}[]{\simeq_{s_-}} \\
 \mathscr{A}ut_{\Qp}\times_{\textrm{Aut}\,\mathcal{O}} V\arrow[dashed]{r}{\cong} \arrow{u}{\simeq_{s_+}} & 
 \mathscr{A}ut_{\Qm}\times_{\textrm{Aut}\,\mathcal{O}} V.
\end{tikzcd}
\]
Equivalently, after trivializing with respect to $s_\pm$, the gluing isomorphism is 
\begin{align*}
\underset{k\geq 0}{\bigoplus} V_k \otimes_\mathbb{C} s_+^{k}  & \rightarrow  \underset{k\geq 0}{\bigoplus} V_k \otimes_\mathbb{C} s_-^{k}, &  
A \otimes_\mathbb{C} s_+^{k}  & \mapsto  (-1)^{k}\sum_{i \geq 0} \dfrac{1}{i!} L_1^i(A) \otimes_\mathbb{C} s_-^{k-i}.
\end{align*}

We next describe spaces of sections of $\Vv_C$ for the nodal curve $C$. Over an open set $U\subset C$ with $Q\notin U$, one has \mbox{$\Vv_C|_U := \eta_*\left(\Vv_{\Ct}\,|\,_{\eta^{-1}(U)}\right)$.} Since $\eta$ is an isomorphism away from $Q$, sections of $\Vv_C$ over $U$ are defined as in the case of smooth curves. To define  spaces of sections of $\Vv_C$ over an open set containing $Q$, it is sufficient to do so on the formal neighborhood
\begin{equation}
\label{eq:DQ}
D_Q=\text{Spec} \,\widehat{\mathscr{O}}_Q =\text{Spec}(\CC \llbracket \tQp, \tQm \rrbracket /(\tQp\tQm)).
\end{equation}
The space of sections  $\Vv_C(D_Q)$ is defined  as the subspace of 
\[\eta_*{\widetilde{\Vv}}(D_Q)=\bigoplus_{k \geq 0} V_k \otimes \left(\tQp^k \CC\llbracket\tQp\rrbracket \oplus \tQm^k \CC\llbracket\tQm\rrbracket \right) \] 
consisting of elements in the kernel of the map 
\begin{equation*} 
\bigoplus_{k \geq 0} V_k \otimes \left(\tQp^k \CC\llbracket\tQp\rrbracket \oplus \tQm^k \CC\llbracket\tQm\rrbracket \right)  \longrightarrow     V
\end{equation*} 
induced by
\begin{align*}
 \left(A \otimes s_+^a f(s_+), B\otimes s_-^b g(s_-)\right) \mapsto f(0)\,\gamma(A) -g(0)\,B
\end{align*}
for homogeneous $A,B \in V$ with $\deg(A)=a$ and $\deg(B)=b$, and for all $f(s_+) \in \CC\llbracket\tQp\rrbracket$ and $g(s_-) \in \CC\llbracket\tQm\rrbracket$. Hence, $\Vv_C(D_Q)$ is spanned by elements
\[
\Big(A \otimes s_+^a, \, (-1)^a \sum_{i\geq 0} \dfrac{1}{i!}L_1^i(A) \otimes s_-^{a-i}\Big) \ \mbox{ and } \ \left(B \otimes \tQp^{b+1}f(s_+), D \otimes \tQm^{d+1}g(s_-)\right)
\]
for homogeneous  $A\in V_a$, $B \in V_b$, and $D \in V_d$, with $f(s_+) \in \CC\llbracket\tQp\rrbracket$ and $g(s_-) \in \CC\llbracket\tQm\rrbracket$. The sheaf $\Vv_C|_{D_Q}$ is naturally an $\widehat{\mathscr{O}}_Q$-module.

\smallskip

To summarize, the sheaf $\Vv_C$ on the nodal curve $C$ can be described as follows. Consider the exact sequence of $\mathscr{O}_{\widetilde{C}}$-modules
\[ 
\begin{tikzcd}
0 \ar[r] & \mathscr{O}_{\widetilde{C}}(-Q_+-Q_-) \otimes \widetilde{\Vv} \ar[r] & \widetilde{\Vv} \ar{r}{} & V_{Q_+} \oplus V_{Q_-} \ar[r] & 0,
\end{tikzcd}
\] 
where $V_{Q_\pm}$ is the skyscraper sheaf supported at $Q_\pm$ with space of sections isomorphic to $V$ via the choice of the coordinate $s_\pm$. Pushing forward this sequence along $\eta$, we obtain an exact sequence which fits in the diagram
\begin{equation} 
\label{eq:diagramVC} 
\begin{tikzcd}
0 \ar[r] & \eta_* \left(\mathscr{O}_{\widetilde{C}}(-Q_+-Q_-) \otimes \widetilde{\Vv}\right) \ar[r] & \eta_* \widetilde{\Vv} \ar{r}{q} & V_{Q}^{\oplus 2} \ar[r] \ar[->>]{d}{\gamma\,\circ\, \pi_1 -\pi_2} & 0 \\
&&& V_Q.
	\end{tikzcd}
\end{equation}
Here, $\pi_i\colon V_{Q} ^{\oplus 2} \rightarrow V_{Q}$ is the natural projection, for $i=1,2$.
The sheaf $\Vv_C$ is then defined as $\ker\left( (\gamma\circ \pi_1 -\pi_2) \circ q\right)$.

\subsection{The structure of the sheaf $\Vv_C$}\label{VSheafS}
We describe here some properties of $\Vv_C$. For a smooth curve $C$, the sheaf $\Vv_C$ is filtered by  the sheaves $\Vv_{\leq k}$ defined as the sheaves of sections of the vector bundles of finite rank $\mathscr{A}ut_C \mathop{\times}_{\textup{Aut}\, \mathcal{O}} V_{\leq k}$. While the action of $\textrm{Aut}\,\mathcal{O}$ on $V_{\leq k}$ is well-defined,  the action of $\textrm{Aut}\,\mathcal{O}$ on $V_k$ is so only modulo $V_{\leq k-1}$, for each~$k$. In particular, $\Vv_{\leq k}$ is well-defined, but $\mathscr{A}ut_C \mathop{\times}_{\textup{Aut}\, \mathcal{O}} V_{k}$ only makes sense as a quotient of $\mathscr{A}ut_C \mathop{\times}_{\textup{Aut}\, \mathcal{O}} V_{\leq k}$ modulo $\mathscr{A}ut_C \mathop{\times}_{\textup{Aut}\, \mathcal{O}} V_{\leq k-1}$.

Assuming for simplicity that the curve $C$ has only one node $Q$, the sheaves 
\[
\left( \bigoplus_{i=0}^k\,  V_i  \otimes  \mathscr{O}_{\Ct}\left(-i\,Q_+ -i\,Q_- \right)  \otimes  \widetilde{\pi}_*\,  \mathscr{O}_{\mathscr{A}ut_{\Ct}}  \right)^{\textrm{Aut}\,\mathcal{O}}
\]
provide an increasing filtration of the sheaf $\widetilde{\Vv}$ on the normalization $\Ct$ from \eqref{eq:VprimeCtilde}. The restriction of the gluing isomorphism in \S\ref{VSheaf} gives a gluing isomorphism between the fibers of such sheaves at the two preimages $Q_+$ and $Q_-$ of the node $Q$, and hence induces an increasing filtration of $\Vv_C$. We denote the subsheaves of such a filtration as $\Vv_{\leq k}$, as in the smooth case.

Consider the associated graded sheaf
\[
\textup{gr}_\bullet \Vv_C :=\oplus_{k\geq 0}\, \textup{gr}_k \Vv_C, \qquad\mbox{where}\qquad \textup{gr}_k \Vv_C:= \Vv_{\leq k} / \Vv_{\leq k-1}.
\]

\begin{lemma}
\label{lemma:gr}
One has
\[
\textup{gr}_\bullet \Vv_C \cong  \oplus_{k\geq 0}  \left(\omega_C^{\otimes -k} \right)^{\oplus \dim V_k}.
\]
\end{lemma}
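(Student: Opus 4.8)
The plan is to compute each graded piece $\textup{gr}_k\mathscr{V}_C$ separately and identify it with $(\omega_C^{\otimes -k})^{\oplus\dim V_k}$; summing over $k$ then gives the statement. The key observation is that, although the action of $\textup{Aut}\,\mathcal{O}$ on $V_k$ itself is only well defined modulo $V_{\leq k-1}$ (as noted in \S\ref{VBundle}), it descends to an action on the quotient $V_{\leq k}/V_{\leq k-1}\cong V_k$, and there it is a character. This will let me recognize both $\textup{gr}_k\mathscr{V}_C$ and $(\omega_C^{\otimes -k})^{\oplus\dim V_k}$ as sheaves of sections of bundles associated with $\mathscr{A}ut_C$ via \emph{isomorphic} $\textup{Aut}\,\mathcal{O}$-representations, whence they are isomorphic.

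First I would record the relevant representation. For $\rho\in\textup{Aut}\,\mathcal{O}$ written as $\exp(\sum_{i\geq 0}a_i z^{i+1}\partial_z)(z)$, the element $\rho$ acts on $V$ as $\exp(\sum_{i\geq 0}-a_i L_i)$. Since $L_p$ with $p>0$ has degree $-p<0$, it acts as $0$ on $V_{\leq k}/V_{\leq k-1}$, while $L_0$ acts on $V_k$ as $k\cdot\textup{id}$; hence the induced action of $\rho$ on $V_{\leq k}/V_{\leq k-1}\cong V_k$ is the scalar $e^{-ka_0}=\rho'(0)^{-k}$. With the conventions of \S\ref{VBundle}, this identifies $\textup{gr}_k\mathscr{V}_C=\mathscr{V}_{\leq k}/\mathscr{V}_{\leq k-1}$ with the sheaf of sections of $\mathscr{A}ut_C\times_{\textup{Aut}\,\mathcal{O}}(V_k,\ \rho\mapsto\rho'(0)^{-k})$. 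On the other hand, a formal coordinate $t$ at a point records in particular the covector $dt$, which exhibits $\omega_C$ as $\mathscr{A}ut_C\times_{\textup{Aut}\,\mathcal{O}}\mathbb{C}_\chi$ with $\chi(\rho)=\rho'(0)$: for smooth $C$ this is \cite[\S6.6]{bzf}, and for nodal $C$ it follows from the identification of the fibre $\mathscr{A}ut_Q$ over a node in \S\ref{blowupmu}, since there the relevant change of coordinates contributes exactly the factor $\gamma'(0)=-1$, the same sign appearing in the gluing $ds_+/s_+=-ds_-/s_-$ of $\omega_C$. Consequently $\omega_C^{\otimes -k}\cong\mathscr{A}ut_C\times_{\textup{Aut}\,\mathcal{O}}\mathbb{C}_{\rho\mapsto\rho'(0)^{-k}}$, and any linear isomorphism $V_k\cong\mathbb{C}^{\dim V_k}$ is automatically $\textup{Aut}\,\mathcal{O}$-equivariant (both sides carry the scalar representation $\rho\mapsto\rho'(0)^{-k}$), so the associated bundles agree: $\textup{gr}_k\mathscr{V}_C\cong(\omega_C^{\otimes -k})^{\oplus\dim V_k}$, uniformly for every stable curve $C$.

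As a cross-check, and to align with the normalization picture of \S\ref{VSheaf}: pulling back along $\nu\colon\widetilde C\to C$ gives $\textup{gr}_k\mathscr{V}_{\widetilde C}\cong(\omega_{\widetilde C}^{\otimes -k})^{\oplus\dim V_k}$ by the above, and $\mathscr{V}_C\subset\nu_*\mathscr{V}_{\widetilde C}$ is glued at each node by $e^{L_1}(-1)^{L_0}$, which preserves the filtration and induces on $\textup{gr}_k$ merely the scalar $(-1)^{L_0}=(-1)^k$; on the $\omega$-side the corresponding sign is $(-1)^{-k}=(-1)^k$ (Lemma \ref{lem:omegaotimesm} with $-k$ in place of $k$; equivalently $\nu^*\eta=ds_+/s_+=-ds_-/s_-$ for a local generator $\eta$ of $\omega_C$), so the two gluing patterns match. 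I expect the only genuine subtlety to be precisely this nodal identification of $\omega_C$ as an associated bundle — equivalently, the fact that $\omega_C^{\otimes -k}$ is a line bundle on $C$ whose pullback to $\widetilde C$ is $\omega_{\widetilde C}^{\otimes -k}(-k\Qp-k\Qm)$, so that $\textup{gr}_k\mathscr{V}_C$ and $(\omega_C^{\otimes -k})^{\oplus\dim V_k}$ are \emph{not} literally the same subsheaf of $\nu_*(\omega_{\widetilde C}^{\otimes -k})^{\oplus\dim V_k}$, only abstractly isomorphic. The associated-bundle argument sidesteps this bookkeeping, while the matching signs $(-1)^k$ confirm that it is consistent.
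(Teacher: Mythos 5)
Your proof is correct and takes essentially the same route as the paper's: both recognize that $\textup{Aut}\,\mathcal{O}$ acts on $V_{\leq k}/V_{\leq k-1}$ through the character $\rho\mapsto\rho'(0)^{-k}$ (the paper phrases this as $L_0\cdot A=kA$, $L_{p>0}\cdot A=0$ and works one line $\mathscr{A}ut_C\times_{\textup{Aut}\,\mathcal{O}}A$ at a time), identify $\omega_C^{\otimes -k}$ as the associated bundle for that character via \cite[\S 6.5.9]{bzf} on the smooth locus, and reduce the nodal case to matching the sign $(-1)^{L_0}=(-1)^k$ induced on $\textup{gr}_k$ by the gluing $e^{L_1}(-1)^{L_0}$ with the residue condition of Lemma \ref{lem:omegaotimesm}. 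One caution about your cross-check aside: if $\textup{gr}_k\mathscr{V}_C$ really were the naive fibre-identification subsheaf of $\nu_*\textup{gr}_k\mathscr{V}_{\widetilde C}$, it could not be even abstractly isomorphic to $(\omega_C^{\otimes -k})^{\oplus \dim V_k}$ (their pullbacks to $\widetilde C$ would differ in degree by $2k$ per node); the resolution is that the trivializations of $\mathscr{V}_C$ near a node arise from the rescaled coordinates of the blow-up picture in \S\ref{blowupmu}, which is exactly what forces the order-$k$ vanishing at $Q_\pm$ visible in Proposition \ref{prop:chiralnodal} — but your main associated-bundle argument does not depend on that aside.
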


This was proved in \cite[\S 6.5.9]{bzf} for smooth curves. The  argument made there extends to stable curves if one replaces the sheaf of differentials $\Omega^1_C$ with the dualizing sheaf $\omega_C$. We sketch the proof for the reader's convenience.

\begin{proof}
Consider $V_k$ as the quotient $(\textup{Aut}\, \mathcal{O})$-representation $V_{\leq k}/V_{\leq k-1}$, and let $A\in V_k$ be nonzero. One has $L_0\cdot A=kA$ and $L_p\cdot A=0$ in $V_k=V_{\leq k}/V_{\leq k-1}$ for $p>0$. Assume first that $C$ is smooth. Then it follows that 
\[
\mathscr{A}:=\left(\mathbb{C}A\otimes \pi_*\,\mathscr{O}_{\mathscr{A}ut_{C}}\right)^{\textup{Aut}\, \mathcal{O}}
\]
 is a line sub-bundle of $\textup{gr}_k \Vv_C$. From \cite[\S 6.5.9]{bzf}, the transition functions of $\mathscr{A}$ and $\omega_C^{\otimes -k}$ match, hence one concludes that $\mathscr{A}\cong \omega_C^{\otimes -k}$, and this implies the statement.

Next, we consider the case when $C$ is nodal. Assume for simplicity that $C$ has only one node $Q$. Consider the line bundle $\mathscr{A}$  constructed from the line bundle
\[
\mathscr{A}' := \left(\mathbb{C}A\otimes \widetilde{\pi}_*\,\mathscr{O}_{\mathscr{A}ut_{\Ct}}\right)^{\textup{Aut}\, \mathcal{O}}\otimes \mathscr{O}_{\Ct}(-kQ_+ -kQ_-)
\]
on the normalization $\Ct$ of $C$ by identifying the fibers at the preimages $Q_+$ and $Q_-$ of the node. From the discussion of the smooth case, one has 
 \[
 \mathscr{A}' \cong \omega_{\Ct}^{\otimes -k}\left(-kQ_+ -kQ_- \right).
 \]
It remains to determine the isomorphisms used to identify the fibers at $Q_+$ and $Q_-$.  The gluing isomorphism $A\mapsto e^{L_1}(-1)^{L_0}A$ of $\Vv_{\leq k}$ from \S\ref{VSheaf} induces the  gluing  $A\mapsto (-1)^{L_0}A$ of $\textup{gr}_k \Vv_C$. In particular, this is the gluing isomorphism of $\mathscr{A}$, which coincides with the gluing of sections for $\omega_C^{\otimes -k}$ given by the condition on the residues in Lemma \ref{lem:omegaotimesm}. Hence one has $\mathscr{A}\cong \omega_C^{\otimes -k}$ in this case as well. The assertion therefore follows.
\end{proof}

\begin{rmk} The reader will notice how both sheaves $\left(\omega_C^{\otimes -k} \right)^{\oplus \dim V_k}$ and $\Vv_{\leq k}/\Vv_{\leq k-1}$ are defined using diagrams similar to the one in \eqref{eq:diagramVC}. From this point of view, the above proof  can be seen as comparing the gluing data encoded by the  vertical maps in the corresponding diagrams.
\end{rmk}

As a consequence of \cite[\S 6.5.9]{bzf}, which is Lemma \ref{lemma:gr} for smooth curves, one has the following statement, which will be used throughout.

\begin{lemma}
\label{lem:small}
Let $(C,P_\bullet)$ be a smooth $n$-pointed curve. One has:  
\[
\mathrm{H}^0(C\setminus P_\bullet, \Vv_C )\cong  \mathrm{H}^0(C\setminus P_\bullet,  \textup{gr}_\bullet\Vv_C).
\]
\end{lemma}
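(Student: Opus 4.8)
The plan is to deduce the statement from the filtration of $\mathscr{V}_C$ together with the vanishing of higher cohomology guaranteed by affineness. First I would recall that $\mathscr{V}_C$ is exhausted by the coherent subsheaves $\mathscr{V}_{\leq k}$, with successive quotients $\mathrm{gr}_k\mathscr{V}_C = \mathscr{V}_{\leq k}/\mathscr{V}_{\leq k-1}$ isomorphic, by Lemma \ref{lemma:gr}, to $(\omega_C^{\otimes -k})^{\oplus \dim V_k}$. Since $C\setminus P_\bullet$ is affine and each $\mathscr{V}_{\leq k}$ as well as each $\mathrm{gr}_k\mathscr{V}_C$ is a coherent $\mathscr{O}_C$-module, all higher cohomology of these sheaves on $C\setminus P_\bullet$ vanishes. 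Hence for each $k$ the short exact sequence
\[
0 \to \mathscr{V}_{\leq k-1} \to \mathscr{V}_{\leq k} \to \mathrm{gr}_k\mathscr{V}_C \to 0
\]
gives, after taking sections over $C\setminus P_\bullet$, a short exact sequence
\[
0 \to H^0(C\setminus P_\bullet,\mathscr{V}_{\leq k-1}) \to H^0(C\setminus P_\bullet,\mathscr{V}_{\leq k}) \to H^0(C\setminus P_\bullet,\mathrm{gr}_k\mathscr{V}_C) \to 0,
\]
the surjectivity coming precisely from $H^1(C\setminus P_\bullet,\mathscr{V}_{\leq k-1})=0$.

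Next I would assemble these. By induction on $k$, using the above short exact sequences, one gets (non-canonically) $H^0(C\setminus P_\bullet,\mathscr{V}_{\leq k}) \cong \bigoplus_{j=0}^k H^0(C\setminus P_\bullet,\mathrm{gr}_j\mathscr{V}_C)$; more to the point, the associated graded of the filtration on $H^0(C\setminus P_\bullet,\mathscr{V}_{\leq k})$ induced by the $\mathscr{V}_{\leq j}$ is identified with $\bigoplus_{j=0}^k H^0(C\setminus P_\bullet,\mathrm{gr}_j\mathscr{V}_C)$. Then I would pass to the limit: since $\mathscr{V}_C = \varinjlim_k \mathscr{V}_{\leq k}$ and taking sections over a (quasi-compact, quasi-separated, in fact affine) scheme commutes with filtered colimits of quasi-coherent sheaves, $H^0(C\setminus P_\bullet,\mathscr{V}_C) = \varinjlim_k H^0(C\setminus P_\bullet,\mathscr{V}_{\leq k})$, whose associated graded is $\bigoplus_{k\geq 0} H^0(C\setminus P_\bullet,\mathrm{gr}_k\mathscr{V}_C) = H^0(C\setminus P_\bullet,\mathrm{gr}_\bullet\mathscr{V}_C)$. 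This yields the claimed isomorphism at the level of associated graded vector spaces; since the statement as written is an abstract isomorphism of vector spaces, choosing splittings of the short exact sequences above (possible since we are over the field $\mathbb{C}$) upgrades this to an honest, if non-canonical, isomorphism.

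The one point requiring a little care — and what I expect to be the main obstacle — is the commutation of $H^0$ with the colimit in the nodal case, and relatedly whether $C\setminus P_\bullet$ is genuinely affine: this is exactly the hypothesis imposed, and it is what makes both the $H^1$-vanishing and the colimit argument work. For a nodal curve, $\mathscr{V}_C$ is a subsheaf of $\nu_*\mathscr{V}_{\widetilde C}$ glued at the preimages of the nodes, but since $\mathscr{V}_{\leq k}$ are genuine coherent sheaves on $C$ (not just on $\widetilde C$) the affineness of $C\setminus P_\bullet$ applies directly and no separate argument on the normalization is needed. I would also remark that one does not even need the full strength of the filtration splitting: the statement is routinely used only through its graded consequences, so it would suffice to record the isomorphism of associated graded spaces; but since vector spaces over $\mathbb{C}$ split, I would state it as an isomorphism as in the lemma. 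Finally, I would note that convergence issues in the completed/pro-finite direction do not arise here because we are taking an increasing union (a colimit), not a limit.
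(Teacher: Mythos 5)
Your argument is correct, and it rests on the same two pillars as the paper's proof: the filtration of $\mathscr{V}_C$ by the $\mathscr{V}_{\leq k}$, the local freeness of the graded pieces from Lemma \ref{lemma:gr}, and affineness of $C\setminus P_\bullet$ to split the resulting short exact sequences. The difference is where the splitting happens. The paper splits the sequences
\[
0 \to \mathscr{V}_{\leq k-1} \to \mathscr{V}_{\leq k}\to \textup{gr}_k \mathscr{V}_C \to 0
\]
at the level of sheaves, using that a locally free sheaf on an affine scheme is projective; by induction this gives a sheaf isomorphism $\mathscr{V}_{\leq k}\cong \textup{gr}_{\leq k}\mathscr{V}_C$ over $C\setminus P_\bullet$, and the lemma follows by taking sections and passing to the union (handled there by exhibiting compatible injections $\phi_k$ and noting every section lies in some $\textup{gr}_{\leq k}$). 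You instead take sections first, use $H^1$-vanishing on the affine to keep the sequences exact, and split the resulting sequences of $\mathbb{C}$-vector spaces, then commute $H^0$ with the filtered colimit. Both routes are valid and both prove the lemma as stated; one thing the paper's version buys that yours does not is the sheaf-level identification on the affine open, which is what is actually invoked downstream (e.g., in the proof of Proposition \ref{prop:chiralnodal}, where the isomorphism \eqref{isoVomplusVom} is described via split exact sequences of vector bundles and the block upper-triangular form of the transition functions). If you only record the isomorphism of global sections, you would need to revisit that point later. One minor remark: when you assemble the non-canonical splittings before taking the colimit, you should (as you implicitly do) choose them compatibly with the inclusions $H^0(\mathscr{V}_{\leq k-1})\hookrightarrow H^0(\mathscr{V}_{\leq k})$, extending a complement at each stage, so that the colimit is genuinely the direct sum $\oplus_k H^0(C\setminus P_\bullet,\textup{gr}_k\mathscr{V}_C)$.
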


\begin{proof}
We claim that on the affine open set $C\setminus P_\bullet$, one has 
\begin{equation}\label{claim1}
\Vv_{\leq k} \cong  \oplus_{i\le k}  \Vv_{\leq i} / \Vv_{\leq i-1} =  \textup{gr}_{\leq k} \Vv_C.
\end{equation}
 Assuming \eqref{claim1}, then one has, for every $k \in \mathbb{Z}_{\ge 0}$, an injection
\[
\phi_k \colon \textup{gr}_{\leq k}\Vv_C \hookrightarrow \Vv_C,
\]
altogether defining a map
$\phi \colon \textup{gr}_\bullet \Vv_C \longrightarrow \Vv_C.$
The map $\phi$ gives the isomorphism we seek.  To  see that $\phi$ is injective, note that any element $x$ in $\textup{gr}_\bullet \Vv_C$  is in fact a finite sum, and hence $x$ is in $\textup{gr}_{\leq k}\Vv$ for some $k$.  So if $x$ is mapped to zero by $\phi$, then $x$ is mapped to zero by $\phi_k$ for some $k$.  Since all maps $\phi_k$ are injective, $x$ is zero.  Surjectivity of $\phi$ follows from the fact that $\Vv_C$ is filtered by the
$\Vv_{\leq k}$.

We prove \eqref{claim1} by induction on $k$ with base case $k=0$.  Lemma \ref{lemma:gr} implies that $\textup{gr}_k \Vv_C$ is locally free.  On affines, locally free sheaves are  projective, and hence on the affine open set $C\setminus P_\bullet$, the following sequence splits:
\[
0 \rightarrow \Vv_{\leq k-1} 
\rightarrow \Vv_{\leq k}\rightarrow \textup{gr}_k \Vv_C \rightarrow 0.
\]
In particular, on $C\setminus P_\bullet$
\[
\Vv_{\leq k}\cong    \Vv_{\leq k-1} \oplus \textup{gr}_k \Vv_C
 \cong  \textup{gr}_{\leq k-1} \Vv_C \oplus \textup{gr}_k \Vv_C,
\]
 and \eqref{claim1} holds.
\end{proof}

\subsection{The logarithmic connection}
\label{FlatConnection}

In this section, we describe the logarithmic connection on $\Vv_C$. For smooth curves, this was defined in \cite[\S 6]{bzf}. The adjective \textit{logarithmic} is used here to emphasize that we work  with possibly nodal curves. For this purpose, we replace the sheaf $\Omega^1_C$, used in  \cite[\S 6]{bzf}, with the dualizing sheaf $\omega_C$ which arises from logarithmic differentials on the normalization of $C$. Hence a \textit{logarithmic connection} on $\Vv_C$ is a $\mathbb{C}$-linear map $\nabla \colon \Vv_C \to \Vv_C \otimes \omega_C$ satisfying $\nabla(fs)= s\otimes df + f\nabla(s)$ for all local sections $f$ of $\mathscr{O}_C$ and  $s$ of $\Vv_C$.  We will use that on open sets $U\subset C$ where $\omega_C$ is trivial,  one can describe a connection as an endomorphism of~$\Vv_C|_U$.

\subsubsection{The connection on smooth curves} \label{connonSmooth} Let $C$ be a smooth curve. On an open subset $U$ of $C$ admitting a global coordinate~$t$  (for instance, on an open $U$ admitting an \'etale map $U\rightarrow \mathbb{A}^1$), one has a trivialization $\Vv_C|_U\simeq_t V\otimes \mathscr{O}_U$. On $\Vv_C|_U$, the connection $\nabla$ is given by the endomorphism $L_{-1}\otimes \mathrm{id}_U+ \mathrm{id}_V\otimes \partial_t$ (compare with~\eqref{eq:partialancillary}).  
This canonically defines a connection $\nabla \colon \Vv_C \to \Vv_C \otimes \omega_C$ independent of the choice of the coordinate $t$ (as in \cite[Thm 6.6.3]{bzf}).

\subsubsection{The connection on nodal curves}
Let $C$ be a nodal curve, and let $\eta \colon \Ct\rightarrow C$ be  its normalization.

\begin{proposition}\label{prop:LC}  
The connection  on $\Vv_{\widetilde{C}}$ described in \S\ref{connonSmooth} naturally induces a logarithmic connection $\nabla \colon \Vv_C \rightarrow \Vv_C\otimes \omega_C$.
\end{proposition}

\begin{proof} 
For simplicity, we assume that $C$ has only one node $Q$.
Recall that the sheaf $\Vv_C$ is defined as a subsheaf of $\eta_*(\widetilde{\Vv})$, where $\widetilde{\Vv}$ is the sheaf  in \eqref{eq:VprimeCtilde}, and similarly, $\omega_C$ is a subsheaf of $\eta_*\,\omega_{\widetilde{C}}(Q_++Q_-)$. Since $\widetilde{C}$ is a smooth curve, restricting the prescription given in \S \ref{connonSmooth} to 
$\widetilde{\Vv}\subset\Vv_{\widetilde{C}}$
 defines 
\[ \widetilde{\nabla} \colon \widetilde{\Vv}\longrightarrow \widetilde{\Vv}\otimes \mathscr{O}_{\Ct}(Q_++Q_-)\otimes \omega_{\widetilde{C}}.
\] 
Pushing forward to $C$ along $\eta$ and restricting to $\Vv_C\subset \eta_*(\widetilde{\Vv})$, we obtain 
\begin{equation} 
\label{eq:defiNablaNodal} 
\nabla \colon \Vv_C \longrightarrow \eta_*\left(\widetilde{\Vv}\otimes \omega_{\widetilde{C}}(Q_++Q_-)\right).
\end{equation} 
We claim that this factors through $\Vv_C \otimes \omega_C$, defining in this way the desired logarithmic connection. That this holds away from $Q$ is clear. 
Therefore, it remains to check the assertion over the formal neighborhood $D_Q$ from \eqref{eq:DQ}.

The sheaf $\omega_C$ is locally free over $C$, and by an argument analogous to the proof of Lemma \ref{lem:omegaotimesm}, we obtain an explicit trivialization over $D_Q$ as  \begin{equation*} 
\omega_C(D_Q) \cong \dfrac{\CC\llbracket s_+, s_-\rrbracket}{(s_+s_-)} \left(\dfrac{ds_+}{s_+}, - \dfrac{ds_-}{s_-}\right) \subset \CC \llbracket s_+\rrbracket \dfrac{ds_+}{s_+} \oplus \CC \llbracket s_-\rrbracket \dfrac{ds_-}{s_-}.
\end{equation*} 
Using this, the restriction of \eqref{eq:defiNablaNodal} to $D_Q$ can be explicitly written as 
\begin{equation}
\label{eq:nablaDQ}
\nabla \colon \Vv_C(D_Q) \longrightarrow \bigoplus_{k\geq 0} V_k \otimes \left( s_+^{k-1} \CC \llbracket s_+\rrbracket \, ds_+  \oplus s_-^{k-1}\CC \llbracket s_-\rrbracket \, ds_- \right).
\end{equation}
To conclude, we show that this factors through $\Vv_C(D_Q)\otimes \left(\frac{ds_+}{s_+}, - \frac{ds_-}{s_-}\right)$. 
By definition, the image via $\nabla$ of an element $(A\otimes f(s_+), B\otimes g(s_-)) \in \Vv_C(D_Q)$, with $A$, $B \in V$, is 
\[
(L_{-1}(A) \otimes f(s_+) ds_+ + A \otimes f'(s_+)ds_+, L_{-1}(B) \otimes g(s_-) ds_- + B \otimes g'(s_-)ds_-).\] 
The coefficient of $\left(\frac{ds_+}{s_+}, - \frac{ds_-}{s_-}\right)$ is 
\begin{equation}\label{eq:exprVc}
(L_{-1}(A)\otimes s_+f(s_+)+A \otimes s_+f'(s_+),-L_{-1}(B)\otimes s_-g(s_-) - B\otimes s_-g'(s_-)).
\end{equation}
We are then left to check that if $(A\otimes f(s_+), B\otimes g(s_-)) \in \Vv_C(D_Q)$, then  \eqref{eq:exprVc} is also an element of $\Vv_C(D_Q)$. To do so, it is enough to check that this is true for the elements of  type 
\[
\Bigg(A \otimes s_+^a, \, (-1)^a \sum_{i\geq 0} \dfrac{1}{i!}L_1^i(A) \otimes s_-^{a-i}\Bigg) \in \Vv_C(D_Q)
\]
 for homogeneous  $A\in V$ of degree $a$. Checking this amounts to showing that
\[(-1)^{a+1} \sum_{j= 0}^{a+1} \dfrac{1}{j!}L_1^j(L_{-1}(A)) \otimes s_-^{a+1-j}+ a \, (-1)^a    \sum_{i= 0}^a \dfrac{1}{i!}L_1^i(A) \otimes s_-^{a-i}
\] 
is equal to 
\[(-1)^{a+1} \sum_{i= 0}^{a} \dfrac{1}{i!} L_{-1}(L_1^i(A)) \otimes s_-^{a-i+1} + (-1)^{a+1} \sum_{i= 0}^{a} \dfrac{a-i}{i!}L_1^i(A) \otimes s_-^{a-i}. \]
Here the sums are truncated, since the terms with larger values of $i$ and $j$ vanish for degree reasons.
Comparing powers of $s_-$, we need to verify  for every $k \in \{0,\dots, a\}$  that
\begin{equation} \label{eq:LHSconn}\dfrac{(-1)^{a+1}}{(k +1)!} L^{k+1}_{1}L_{-1}(A) + \dfrac{ a(-1)^a}{k!} L^k_{1}(A)
\end{equation} is equal to
\begin{equation} \label{eq:RHSconn}\dfrac{(-1)^{a+1}}{(k+1)!} L_{-1}L^{k+1}_{1}(A) + \dfrac{(a-k)(-1)^{a+1}}{k!} L^k_{1}(A)
\end{equation} 
in $V$. Repeatedly applying the commutator  $[L_1, L_{-1}]=2L_0$,  one has
\[ L_1^{k+1}L_{-1}(A)=L_{-1}L_1^{k+1}(A)+(2a-k)(k+1)\,L_1^{k}(A) ,\]
which indeed implies that \eqref{eq:LHSconn} and \eqref{eq:RHSconn} are  equal in $V$. It follows that \eqref{eq:nablaDQ} factors through $\Vv_C(D_Q)\otimes \left(\frac{ds_+}{s_+}, - \frac{ds_-}{s_-}\right)$, hence the statement.
\end{proof}

\subsubsection{The connection: summary} In conclusion, the connection is given~by:

\begin{defn} \label{def:nabla} 
Let $C$ be a curve which has at worst simple nodal singularities. We define the logarithmic connection $\nabla \colon \Vv_C \to \Vv_C \otimes \omega_C$ as the map induced by the following endomorphisms on formal neighborhoods of points of $C$:
for a smooth point $P \in C$ with formal coordinate $t$ at $P$, the endomorphism of $\Vv_C(D_P)$ given by
$L_{-1} \otimes \text{id}_{\CC\llbracket t \rrbracket} + \text{id}_V\otimes \partial_t$,
and for a node $Q \in C$ locally given by $s_+ s_-=0$, the endomorphism of $\Vv_C(D_Q)$ given by
\begin{equation} 
\label{eq:nablaendonod} \left(L_{-1} \otimes s_+ \text{id}_{\CC\llbracket s_+ \rrbracket} + \text{id}_V \otimes s_+ \partial_{s_+}, -L_{-1} \otimes s_- \text{id}_{\CC\llbracket s_- \rrbracket} - \text{id}_V \otimes s_-
 \partial_{s_-} \right).
\end{equation}
\end{defn}

\subsection{The coordinate-free Lie algebra ancillary to $V$} 
\label{coordfreeLV} As a first application, one obtains a coordinate-free version of the Lie algebra ancillary to~$V$. For a punctured disk $D^\times_{P}$ about a smooth point $P$ on $C$ and a formal coordinate $t$ at $P$, one has
\begin{equation}
\label{eq:simeqt}
\text{H}^0\left(D^\times_{P}, \Vv_C\otimes \omega_C/\textrm{Im}\nabla \right) \xrightarrow{\simeq_t} \mathfrak{L}_t(V).
\end{equation}
A section of $\Vv_C\otimes \omega_C$ on $D^\times_{P}$ with respect to the $t$-trivialization
\begin{equation*}
B\otimes \sum_{i\geq i_0}a_i t^i dt \quad\in\quad  V\otimes_{\mathbb{C}} \mathbb{C}(\!(t)\!) \otimes_{\mathbb{C}(\!(t)\!)} \mathbb{C}(\!(t)\!)dt \;\simeq_{t}\text{H}^0 \bigl(D^\times_{P},  \Vv_C\otimes \omega_C \bigr) 
\end{equation*}
maps to
\begin{equation*}
\textrm{Res}_{t=0}\,Y[B,t] \sum_{i\geq i_0}a_i t^i dt \quad\in\quad \mathfrak{L}_{t}(V),
\end{equation*}
where $Y[B,t]:=\sum_{i\in \mathbb{Z}}B_{[i]}t^{-i-1}$. Sections in $\textrm{Im}\nabla \subset\Vv_C\otimes \omega_C$ map to zero, and this defines a linear map from sections of $\Vv_C\otimes \omega_C/\textrm{Im}\nabla$ on $D^\times_{P}$ to  $\mathfrak{L}_{t}(V)$. The vector space $\text{H}^0\left(D^\times_{P}, \Vv_C\otimes \omega_C/\textrm{Im}\nabla \right)$ has the structure of a Lie algebra such that  \eqref{eq:simeqt} is an isomorphism of Lie algebras  \cite[\S\S 19.4.14, 6.6.9]{bzf}.


\section{The new chiral Lie algebra $\Ll_{C\setminus P_{\bullet}}(V)$}
\label{Chiral}

\subsection{Definition of the chiral Lie algebra}\label{ChiralDef}
 For $(C,P_{\bullet})$ a stable $n$-pointed curve and $V$ a vertex operator algebra,  set 
\[
\Ll_{C\setminus P_\bullet}(V) := \text{H}^0\left(C\setminus P_\bullet, \frac{\Vv_C\otimes \omega_C}{\textrm{Im}\nabla} \right).
\]
Here $\Vv_C$ and its logarithmic connection $\nabla$ are as in \S \ref{sec:VAstable}.

\subsection{The chiral Lie algebra maps to the Lie algebra ancillary to $V$}
\label{ChiralMaps}
For each~$i$, let $t_i$ be a formal coordinate at~$P_i$, let $D^\times_{P_i}$ be the punctured formal disk about $P_i$ on $C$,  and $\mathfrak{L}_{t_i}(V)$ be  the  Lie algebra ancillary to $V$ (\S\ref{LV}). Consider the linear map obtained as the composition
\begin{equation}
\label{eq:chiraltosingleancillary}
\Ll_{C\setminus P_\bullet}(V) \rightarrow  \text{H}^0\left(D^\times_{P_i}, \Vv_C\otimes \omega_C/\textrm{Im}\nabla \right) 
\xrightarrow{\cong}  \mathfrak{L}_{t_i}(V).
\end{equation} The first map is canonical and obtained by restricting sections. The second map is the isomorphism of Lie algebras  \eqref{eq:simeqt} and depends on the formal coordinates~$t_i$. From \eqref{eq:chiraltosingleancillary}, we construct the linear map
\begin{equation}
\label{phiL}
\varphi_{\Ll}\colon \Ll_{C\setminus P_\bullet}(V) \rightarrow \oplus_{i=1}^n \text{H}^0\left(D^\times_{P_i}, \Vv_C\otimes \omega_C/\textrm{Im}\nabla \right) 
\xrightarrow{\cong} \oplus_{i=1}^n \mathfrak{L}_{t_i}(V).
\end{equation}
After \cite[\S 19.4.14]{bzf}, when $C$ is smooth, the first map of \eqref{eq:chiraltosingleancillary} is a homomorphism of Lie algebras, hence so is $\varphi_{\Ll}$, next denoted simply $\varphi$. The map $\varphi$ thus induces an action of $\Ll_{C\setminus P_\bullet}(V)$ on $\mathfrak{L}(V)^{\oplus n}$-modules. This will be used in \S\ref{Coinvariants}. In Proposition \ref{lem:M3} we show an analogous result for nodal curves.

\subsection{A close look at the chiral Lie algebra for nodal curves}
\label{CloserLook}
Let $(C, P_\bullet)$ be a stable $n$-pointed curve such that $C\setminus P_\bullet$ is affine. Assume for simplicity that $C$ has exactly one simple node, which we denote by $Q$.  Let $\eta\colon\Ct\rightarrow C$ be the normalization of $C$,  let $\Qp$ and $\Qm$ be the two preimages of $Q$, and set $Q_\bullet=(\Qp, \Qm)$. Let $s_+$ and $s_-$ be formal coordinates at $\Qp$ and $\Qm$, respectively, such that locally around $Q$, the curve $C$ is given by the equation $s_+ s_-=0$. The chiral Lie algebra for $\left(\Ct,P_\bullet\sqcup Q_\bullet\right)$ is 
\[
\Ll_{\Ct\setminus P_\bullet\sqcup Q_\bullet}(V) = \text{H}^0\left(\Ct\setminus P_\bullet\sqcup Q_\bullet, \Vv_{\Ct}\otimes\omega_{\Ct}/\textrm{Im}\,\nabla\right),
\]
and consider the linear map given by restriction:
\begin{equation}
\label{chiraltoLQp}
\Ll_{\Ct\setminus P_\bullet\sqcup Q_\bullet}(V) \rightarrow \text{H}^0\left(D^\times_{\Qp}, \Vv_{\Ct}\otimes\omega_{\Ct}/\textrm{Im}\,\nabla\right)\xrightarrow{\simeq_{s_+}} \mathfrak{L}_{\Qp}(V).
\end{equation}
Recall the triangular decomposition of $\mathfrak{L}_{\Qpm}(V)$ from \eqref{eq:triangdecLV}:
\[
\mathfrak{L}_{\Qpm}(V)=\mathfrak{L}_{\Qpm}(V)_{<0} \oplus \mathfrak{L}_{\Qpm}(V)_0 \oplus \mathfrak{L}_{\Qpm}(V)_{>0}.
\]
Let $\sigma_{\Qpm} \in \mathfrak{L}_{\Qpm}(V)$ be the image of $\sigma \in \Ll_{\Ct\setminus P_\bullet\sqcup Q_\bullet}(V)$, and let $\left[\sigma_{\Qpm}\right]_0$ be the image of $\sigma_{\Qpm}$ under the projection $\mathfrak{L}_{\Qpm}(V)\rightarrow \mathfrak{L}_{\Qpm}(V)_0$.

Recall the involution $\vartheta$ of $\mathfrak{L}(V)^f$ in \eqref{eq:iota}.  This restricts to an involution on $\mathfrak{L}(V)_0$ given for homogeneous $A\in V$ of degree $a$ by
\[
\vartheta\left(A_{[a-1]} \right) = (-1)^{a-1} \sum_{i\geq 0} \frac{1}{i!} \left(L_1^i A\right)_{[a - i -1]}.
\]

\begin{proposition}
\label{prop:chiralnodal}
For $C\setminus P_\bullet$ affine, one has
\[
\eta^* \Ll_{C\setminus P_\bullet}(V) =
\left\{ \sigma \in \Ll_{\Ct\setminus P_\bullet\sqcup Q_\bullet}(V) \; \text{\Bigg|} \, 
\begin{array}{rl}
\mbox{(i)}& \sigma_{\Qp}, \sigma_{\Qm}\in \mathfrak{L}(V)_{\leq 0} \\ [0.2cm]
\mbox{(ii)}& \left[\sigma_{\Qm}\right]_0 = \vartheta\left( \left[\sigma_{\Qp}\right]_0\right) 
\end{array}
\right\}.
\]
\end{proposition}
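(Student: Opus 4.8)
The plan is to realize $\mathscr{L}_{C\setminus P_\bullet}(V)$ inside $\mathscr{L}_{\widetilde{C}\setminus P_\bullet\sqcup Q_\bullet}(V)$ by restriction of sections, and then to characterize the image by a local analysis at the node $Q$. Since $C\setminus P_\bullet$ and $\widetilde{C}\setminus P_\bullet\sqcup Q_\bullet$ are affine, $\mathscr{V}_C$ and $\mathscr{V}_{\widetilde{C}}$ have no higher cohomology there, so
\[
\mathscr{L}_{C\setminus P_\bullet}(V) = \frac{H^0\!\left(C\setminus P_\bullet, \mathscr{V}_C\otimes\omega_C\right)}{\nabla\,H^0\!\left(C\setminus P_\bullet, \mathscr{V}_C\right)},
\]
and similarly for $\widetilde{C}$. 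As $\mathscr{V}_C$ is a subsheaf of $\nu_*\mathscr{V}_{\widetilde{C}}$ and $\nu$ is an isomorphism away from $Q$, restriction induces a map $\iota\colon\mathscr{L}_{C\setminus P_\bullet}(V)\to\mathscr{L}_{\widetilde{C}\setminus P_\bullet\sqcup Q_\bullet}(V)$. First I would check $\iota$ is injective: a section of the torsion-free sheaf $\mathscr{V}_C\otimes\omega_C$ that vanishes on the dense open $C\setminus\{P_\bullet,Q\}$ is zero, and the compatibility with $\nabla$ follows because if $\nabla\tau$ extends across $Q$ with at worst a simple pole then $\tau$ is already regular at $\Qp$, $\Qm$ and glues to a section of $\mathscr{V}_C$. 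It then remains to identify the image: a class $\sigma\in\mathscr{L}_{\widetilde{C}\setminus P_\bullet\sqcup Q_\bullet}(V)$ lies in it precisely when it extends across $Q$, which by construction depends only on the germs of $\sigma$ at $\Qp$ and $\Qm$, that is, on $\sigma_{\Qp},\sigma_{\Qm}\in\mathfrak{L}(V)$ under the coordinate-free identification \eqref{eq:simeqt}.

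Next I would extract the condition $\sigma_{\Qpm}\in\mathfrak{L}(V)_{\le0}$. Applying Lemma~\ref{lem:small} to $\mathscr{V}_C\otimes\omega_C$ (its proof carries over, since each $\textup{gr}_k\mathscr{V}_C\otimes\omega_C$ is locally free and $C\setminus P_\bullet$ is affine), a section of $\mathscr{V}_C\otimes\omega_C$ on $C\setminus P_\bullet$ is determined, via the filtration, by its graded pieces, and by Lemma~\ref{lemma:gr} the degree-$k$ piece is a section of $\omega_C^{\otimes(1-k)}$ on $C\setminus P_\bullet$. By Lemma~\ref{lem:omegaotimesm} such a section, expanded at $\Qpm$ in the coordinate $s_\pm$, has order $\ge k-1$, with the $(1-k)$-residues at $\Qp$ and $\Qm$ agreeing up to the sign $(-1)^{k-1}$. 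Feeding this through \eqref{eq:simeqt}, which sends $A\otimes s_\pm^{\,j}\,ds_\pm$ to $A_{[j]}$ of degree $\deg A-j-1$: the order bound in each degree $k$ says exactly that $\sigma_{\Qpm}$ has no component of positive degree, i.e.\ $\sigma_{\Qpm}\in\mathfrak{L}(V)_{\le0}$, while the residue condition becomes the matching of the degree-$0$ parts up to that sign $(-1)^{k-1}$.

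It remains to sharpen the residue matching into $[\sigma_{\Qm}]_0 = \vartheta\big([\sigma_{\Qp}]_0\big)$. The graded picture above only sees the $(-1)^{L_0}$-part of the gluing, whereas $\mathscr{V}_C$ is glued at $Q$ by $A\mapsto e^{L_1}(-1)^{L_0}A$, with the two branches related by the coordinate change $s_+=\gamma(s_-)$ of \eqref{eq:glisomAut}--\eqref{eq:muz}. Transporting a germ at $\Qp$ whose expansion lies in $\mathfrak{L}(V)_{\le0}$ to a germ at $\Qm$ under this combined gluing — the $e^{L_1}$ factor lowers the $V$-degree, the non-leading terms of $s_+=\gamma(s_-)$ shift the index — one finds that the degree-$0$ part transforms exactly by the operator of \eqref{eq:iota}: for homogeneous $B\in V_k$, $B_{[k-1]}\mapsto (-1)^{k-1}\sum_{i\ge0}\tfrac{1}{i!}(L_1^iB)_{[k-i-1]}$, which is the restriction of $\vartheta$ to $\mathfrak{L}(V)_0$; this is the geometric content of $\vartheta$, compare \cite[\S 10.4.8]{bzf}. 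With this computation both inclusions follow. For $\sigma\in\mathscr{L}_{C\setminus P_\bullet}(V)$ one reads the two conditions off the local description. Conversely, given $\sigma$ satisfying them, $\sigma_{\Qpm}\in\mathfrak{L}(V)_{\le0}$ allows a representative that near each $\Qpm$ has, in degree $k$, a zero of order $\ge k-1$, and $[\sigma_{\Qm}]_0 = \vartheta([\sigma_{\Qp}]_0)$ is exactly the compatibility required for the $\Qp$- and $\Qm$-germs to glue into a section of $\mathscr{V}_C\otimes\omega_C$ near $Q$; together with $\sigma$ away from $Q$ this produces a preimage.

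The main obstacle is the local computation at $Q$ behind the third paragraph: one must track simultaneously the three twists — the $e^{L_1}(-1)^{L_0}$-gluing of $\mathscr{V}_C$, the simple-pole/opposite-residue gluing of $\omega_C$, and the coordinate change $s_+=\gamma(s_-)$ — while working modulo $\textrm{Im}\,\nabla$, and verify that their net effect on the degree-$0$ part is precisely $\vartheta$, signs included. A secondary technical point is the reduction in the first paragraph, namely the injectivity of $\iota$ and the identification of $H^0$ of the quotient sheaf with the quotient of $H^0$'s, which genuinely uses that $C\setminus P_\bullet$ is affine.
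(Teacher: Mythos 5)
Your proposal is correct and follows essentially the same route as the paper: reduce to lifts in $H^0\left(C\setminus P_\bullet, \mathscr{V}_C\otimes\omega_C\right)$ using affineness, pass to the graded pieces via Lemmas \ref{lemma:gr} and \ref{lem:small} so that Lemma \ref{lem:omegaotimesm} gives the order bound forcing $\sigma_{\Qpm}\in\mathfrak{L}(V)_{\leq 0}$, and then track the $e^{L_1}(-1)^{L_0}$-gluing of $\mathscr{V}_C$ (block upper-triangular in the graded decomposition) to identify the degree-zero matching as $\vartheta$. The local computation you flag as the main obstacle is precisely what the paper's Step 3 carries out, via the identification $L_1^i A\otimes s_\pm^{-1}ds_\pm\mapsto A_{[k-i-1]}$ and the gluing $A\otimes s_+^{-1}ds_+\mapsto -\left(e^{L_1}(-1)^{L_0}A\right)\otimes s_-^{-1}ds_-$.
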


\begin{proof}
Since  $C\setminus P_\bullet$ is affine, one has 
\[
\Ll_{C\setminus P_\bullet}(V)=\text{H}^0\left( C\setminus P_\bullet, \Vv_C\otimes \omega_C \right)/\nabla \text{H}^0\left( C\setminus P_\bullet, \Vv_C \right)
\]
and similarly, since $\widetilde{C} \setminus P_\bullet \sqcup Q_\bullet$ is also affine, one has
\[
\Ll_{\widetilde{C} \setminus P_\bullet \sqcup Q_\bullet}(V)=\text{H}^0\left(\widetilde{C} \setminus P_\bullet \sqcup Q_\bullet, \Vv_{\widetilde{C}} \otimes \omega_{\widetilde{C}} \right)/\nabla \text{H}^0\left( \widetilde{C} \setminus P_\bullet \sqcup Q_\bullet, \Vv_{\widetilde{C}} \right).
\]
To characterize elements in $\Ll_{C\setminus P_\bullet}(V)$, we can first describe their lifts in the vector space $\text{H}^0\left( C\setminus P_\bullet, \Vv_C\otimes \omega_C \right)$, 
and then show that the description descends to the quotient by the image of $\nabla$. 

By definition of $\Vv_C$ over nodal curves via the sheaf $\widetilde{\Vv}$ in \eqref{eq:VprimeCtilde} and by Lemma \ref{lem:omegaotimesm}, we have the inclusion
\begin{equation}
\label{eq:isoVomplusVomnew}
\eta^* \,\text{H}^0\left( C\setminus P_\bullet, \Vv_C\otimes \omega_C \right) \subseteq \text{H}^0\left(\Ct \setminus P_\bullet, \widetilde{\Vv} \otimes \omega_{\Ct}(Q_++Q_-)\right).
\end{equation} 
To see that \eqref{eq:isoVomplusVomnew}  implies \textit{(i)}, consider the composition of linear maps
\begin{equation}
\label{eq:sigmatosigmaQpmnew}
\begin{tikzcd}[column sep=1.5em, row sep=1.3em]
 \text{H}^0\left(\Ct \setminus P_\bullet, \widetilde{\Vv} \otimes \omega_{\Ct}(Q_++Q_-)\right)  \arrow[rightarrow]{d}\arrow[rightarrow, dashed]{r}
 & \underset{k\geq 0}{\bigoplus} V_k\otimes_{\mathbb{C}} s_{\pm}^{k-1}\mathbb{C}\left\llbracket s_{\pm}\right\rrbracket ds_{\pm}\\
\text{H}^0\left(D_{\Qpm}, \widetilde{\Vv} \otimes \omega_{\Ct}(Q_++Q_-)\right)
\arrow[rightarrow]{ru}[swap]{\simeq_{s_\pm} }	
\end{tikzcd}
\end{equation}
where the left vertical map is the restriction, followed by the $s_{\pm}$-trivialization. 
By \S \ref{coordfreeLV}, the projection $V\otimes \mathbb{C}(\!(s_\pm )\!)ds_{\pm}\rightarrow\mathfrak{L}_{\Qpm}(V)$~is given by
\begin{equation*} 
B\otimes \mu \mapsto \textrm{Res}_{s_\pm=0} \,Y\left[B,s_\pm\right]\mu \quad \in\quad \mathfrak{L}_{\Qpm}(V).
\end{equation*}
It follows that the image of 
\begin{equation}
\label{eq:projtoLqpmV}
\underset{k\geq 0}{\bigoplus} V_k\otimes_{\mathbb{C}} s_{\pm}^{k-1}\mathbb{C}\left\llbracket s_{\pm}\right\rrbracket ds_{\pm}
\rightarrow \mathfrak{L}_{\Qpm}(V)
\end{equation}
 lies in $\mathfrak{L}_{\Qpm}(V)_{\leq 0}$. 
 Composing \eqref{eq:sigmatosigmaQpmnew} with \eqref{eq:projtoLqpmV}, we  deduce that for $\sigma$ in \eqref{eq:isoVomplusVomnew}, its image $\sigma_{Q_\pm}$ in $\mathfrak{L}_{\Qpm}(V)$ lies in $\mathfrak{L}_{\Qpm}(V)_{\leq 0} \cong\mathfrak{L}(V)_{\leq 0}$, hence \textit{(i)}.

The assertion \textit{(ii)} follows from the gluing isomorphisms that define $\Vv_C$ and $\omega_C$ as subsheaves of $\eta_*\widetilde{\Vv}$ and $\eta_*\,\omega_{\Ct}(Q_++Q_-)$, respectively, using diagrams as in \eqref{eq:diagramVC}.  Given $\sigma$ in \eqref{eq:isoVomplusVomnew}, denote by
$\left[\sigma_{\Qpm}\right]_0$ its image via the composition of \eqref{eq:sigmatosigmaQpmnew} with the projection 
\begin{equation*}
\bigoplus_{k\geq 0} V_k\otimes_{\mathbb{C}} s_{\pm}^{k-1}\mathbb{C}\left\llbracket s_{\pm}\right\rrbracket ds_{\pm}
 \twoheadrightarrow \bigoplus_{k\geq 0}\, V_k\otimes_{\mathbb{C}} s_{\pm}^{k-1}ds_{\pm}.
\end{equation*}
In view of a diagram as in \eqref{eq:diagramVC},   $\left[\sigma_{\Qpm}\right]_0$ is the restriction of $\sigma$ at the fibers at $\Qpm$. For $\sigma$ to correspond to a section of $\eta^*\,\mathrm{H}^0(C \setminus P_\bullet, \Vv_C \otimes \omega_C)$, the elements $\left[\sigma_{Q_+}\right]_0$ and $\left[\sigma_{Q_-}\right]_0$ need to satisfy an identity coming from the gluing isomorphism between the fibers of $\widetilde{\Vv} \otimes \omega_{\Ct}(Q_++Q_-)$ at~$\Qpm$. 
For homogeneous $A\in V$ of degree $a$, the gluing isomorphism on fibers of $\eta_*\widetilde{\Vv}$ at~$\Qpm$ defining $\Vv_C$ is given by 
\[
A\otimes s_+^a \mapsto (-1)^{a}\sum_{i \geq 0} \dfrac{1}{i!} L_1^i(A) \otimes s_-^{a-i}.
\]
 The gluing on fibers of $\eta_*\,\omega_{\Ct}(Q_++Q_-)$ at~$\Qpm$ defining $\omega_C$ is given by $s_+^{-1}ds_+\mapsto -s_-^{-1}ds_+$.
Combining the two, the induced gluing on fibers of $\widetilde{\Vv} \otimes \omega_{\Ct}(Q_++Q_-)$ at~$\Qpm$ is given by
\begin{equation}
\label{eq:gluingchirallift}
A \otimes s_+^{a-1}ds_+ \mapsto
 (-1)^{a-1}\sum_{i\geq 0} \dfrac{1}{i!} L_1^i(A) \otimes s_-^{a-i-1} ds_-.
\end{equation}
After mapping to $\mathfrak{L}_{\Qpm}(V)_0$ via the restriction of \eqref{eq:projtoLqpmV}, 
the gluing \eqref{eq:gluingchirallift} implies that the images of the elements $\left[\sigma_{\Qpm}\right]_0$ in $\mathfrak{L}_{\Qpm}(V)_0$, still denoted $\left[\sigma_{\Qpm}\right]_0$, satisfy \textit{(ii)} by definition of $\vartheta$.

To show that the conditions are $\nabla$-equivariant,  it is enough to check on a neighborhood of $Q$. 
 Since $\nabla$ on $\Vv_C$ is constructed from $\nabla$ on $\Vv_{\widetilde{C}}$, one  verifies that $\eta^*\nabla \Vv_C(D_Q) \subset \nabla \Vv_{\widetilde{C}}(D_{Q_+}^\times \sqcup D_{Q_-}^\times)$, hence the statement.
\end{proof}

We  combine \S \ref{ChiralMaps} with Proposition \ref{prop:chiralnodal} to show the following:

\begin{proposition}
\label{lem:M3}
The normalization map $\eta\colon \Ct \rightarrow C$ identifies $\Ll_{C\setminus P_\bullet}(V)$ with a Lie subalgebra of $\Ll_{\Ct \setminus P_\bullet \sqcup Q_\bullet}(V)$. Moreover, this induces an action of $\Ll_{C\setminus P_\bullet}(V)$ on $\mathfrak{L}(V)^{\oplus n}$-modules.
\end{proposition}

\begin{proof}
For simplicity, we may assume that $C$ has a single node $Q$. 
By  Proposition \ref{prop:chiralnodal}, $\eta^*\Ll_{C\setminus P_\bullet}(V)$ can be identified with the subspace of sections in $\Ll_{\Ct \setminus P_\bullet \sqcup Q_\bullet}(V)$ whose restrictions to $Q_\pm$  is given by the subspace of
$\mathfrak{L}_{Q_+}(V) \oplus \mathfrak{L}_{Q_-}(V)\simeq_{s_\pm} \mathfrak{L}(V)^{\oplus 2}$
generated by \mbox{$\mathfrak{L}(V)_{< 0}^{\oplus 2}$} and the elements of  type $(A_{[a-1]}, \vartheta(A_{[a-1]}))\in \mathfrak{L}(V)_{0}^{\oplus 2}$ for homogeneous $A \in V$ of degree $a$. 

Since $\mathfrak{L}(V)_{<0}$ and $\mathfrak{L}(V)_{0}$ are Lie subalgebras of $\mathfrak{L}(V)$ and $\vartheta$ is a Lie algebra morphism, it follows that 
 $\eta^*\Ll_{C\setminus P_\bullet}(V)$ is a Lie subalgebra of $\Ll_{\Ct \setminus P_\bullet \sqcup Q_\bullet}(V)$.

To conclude, the analogue of the morphism \eqref{phiL} for the nodal $C\setminus P_\bullet$ is the composition of the Lie algebra morphisms:
\[
\begin{tikzcd}
 \Ll_{\Ct \setminus P_\bullet \sqcup Q_\bullet}(V) \ar{dr}{\widetilde{\varphi}}&\\
\Ll_{C\setminus P_\bullet}(V) \ar{u}{\eta^*} \ar[dashed]{r}[swap]{\varphi} &\oplus_{i=1}^n \mathfrak{L}_{t_i}(V),
\end{tikzcd}
\] 
where  $\eta ^*$ is  described in Proposition \ref{prop:chiralnodal} and $\widetilde{\varphi}$ is as in \eqref{phiL}.
\end{proof}

\subsection{A consequence of Riemann-Roch for chiral Lie algebras}
\label{sec:chiralRR}
We give here a statement parallel to \S\ref{sec:RR} for chiral Lie algebras.
Let $C$ be a smooth curve, possibly disconnected, with two non-empty sets of distinct marked points $P_\bullet=(P_1,\dots,P_n)$ and $Q_\bullet=(Q_1,\dots,Q_m)$. For  $i \in \{1, \dots, m\}$, let $s_i$ be a formal coordinate at  $Q_i$. For $\sigma\in \Ll_{C\setminus P_\bullet\sqcup Q_\bullet}(V)$, let $\sigma_{Q_i}$ be the image of $\sigma$ under the map given by restriction
\[
\Ll_{C\setminus P_\bullet\sqcup Q_\bullet}(V) \rightarrow \text{H}^0\left(D^\times_{Q_i}, \Vv_{C}\otimes\omega_{C}/\textrm{Im}\,\nabla\right)\xrightarrow{\simeq_{s_i}} \mathfrak{L}_{Q_i}(V).
\]
For an integer $N$, consider 
\[
\mathfrak{L}_{Q_i}(V,NQ_i) = V\otimes s_i^N \mathbb{C} \llbracket s_i\rrbracket / \mathrm{Im}\, \partial.
\]
This is a Lie subalgebra of $\mathfrak{L}_{Q_i}(V)$.

\begin{proposition}
\label{prop:chiralRR}
Assume  $C\setminus P_\bullet$ is affine, 
fix   $E\in V$ homogeneous, and integers $d$ and $N$. There exists 
$\sigma\in \Ll_{C\setminus P_\bullet\sqcup Q_\bullet}(V)$ such that: 
\begin{align*}
\sigma_{Q_i} &\equiv E_{[d]}  &&  \in &&  \mathfrak{L}_{Q_i}(V)/\mathfrak{L}_{Q_i}(V,NQ_i) , &&\mbox{for a fixed } i,\\
\sigma_{Q_j} & \equiv 0  && \in &&\mathfrak{L}_{Q_j}(V)/\mathfrak{L}_{Q_j}(V,NQ_j), &&\mbox{for all } j\not= i.
\end{align*}
\end{proposition}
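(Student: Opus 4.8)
The plan is to reduce Proposition~\ref{prop:chiralRR} to the statement about $k$-differentials in \S\ref{sec:RR}, using the filtered structure of $\mathscr{V}_C$ together with Lemmas~\ref{lemma:gr} and~\ref{lem:small}. First I would observe that the Laurent expansion $\sigma_{Q_i}\equiv E_{[d]}$ modulo $\mathfrak{L}_{Q_i}(V,NQ_i)$ only depends on finitely many Fourier coefficients, and that the element $E_{[d]}\in\mathfrak{L}(V)$ is the image of a section of $\mathscr{V}_C\otimes\omega_C$ supported in a single graded piece. Concretely, if $E$ is homogeneous of degree $k$, then under the identification coming from Lemma~\ref{lemma:gr} the relevant component of $\mathscr{V}_C\otimes\omega_C$ is $\omega_C^{\otimes 1-k}$ (the factor $\omega_C$ twisting $\omega_C^{\otimes -k}$), and the map \eqref{eq:simeqt} sends a section whose Laurent expansion at $Q_i$ is $s_i^{d}(ds_i)^{1-k}$ (times the basis vector $E$ in the $V_k$-summand) to an element of $\mathfrak{L}_{Q_i}(V)$ whose leading behavior is governed by $E_{[e]}$ for an explicit $e$ determined by $d$ and $k$; solving $e=d$ just fixes which power of $s_i$ to prescribe.

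Next I would invoke \S\ref{sec:RR} with $k$ replaced by $1-k$ and with the target Laurent data chosen so that, after applying $\mathrm{Res}_{s_i=0}\,Y[E,s_i](-)$, the image in $\mathfrak{L}_{Q_i}(V)$ is congruent to $E_{[d]}$ modulo $s_i^N$, i.e. modulo $\mathfrak{L}_{Q_i}(V,NQ_i)$; and so that the expansion at every $Q_j$ with $j\neq i$ lies in $s_j^N\mathbb{C}\llbracket s_j\rrbracket(ds_j)^{1-k}$, hence maps into $\mathfrak{L}_{Q_j}(V,NQ_j)$. This produces a section $\mu\in H^0\bigl(C\setminus P_\bullet\sqcup Q_\bullet,\omega_C^{\otimes 1-k}\bigr)$. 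Using Lemma~\ref{lem:small} (so that on the affine $C\setminus P_\bullet\sqcup Q_\bullet$ one has $H^0(\mathscr{V}_C\otimes\omega_C)\cong H^0(\mathrm{gr}_\bullet\mathscr{V}_C\otimes\omega_C)$, by the same splitting argument as in Proposition~\ref{prop:chiralnodal}), I lift $\mu$ to a section of $\mathscr{V}_C\otimes\omega_C$ on $C\setminus P_\bullet\sqcup Q_\bullet$ sitting in the degree-$k$ summand, and then let $\sigma$ be its class in $\mathscr{L}_{C\setminus P_\bullet\sqcup Q_\bullet}(V)=H^0(C\setminus P_\bullet\sqcup Q_\bullet,\mathscr{V}_C\otimes\omega_C/\mathrm{Im}\,\nabla)$. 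Passing to the quotient by $\mathrm{Im}\,\nabla$ does not affect the images $\sigma_{Q_j}$ in the various $\mathfrak{L}_{Q_j}(V)$ since \eqref{eq:simeqt} kills $\mathrm{Im}\,\nabla$ by construction.

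The main obstacle is bookkeeping rather than conceptual: one must carefully match the indexing conventions — the degree shift $\deg A_{[i]}=\deg A-i-1$ from \eqref{eq:degAi}, the twist by $\omega_C$ versus $\omega_C^{\otimes -k}$, and the residue normalization in the map \eqref{eq:simeqt} — so that the explicit power $s_i^d$ fed into \S\ref{sec:RR} really yields $E_{[d]}$ and not $E_{[d']}$ for some off-by-one $d'$, and so that the ``mod $N$'' truncation on the differential side corresponds exactly to the Lie subalgebra $\mathfrak{L}_{Q_i}(V,NQ_i)$. A secondary point needing a line of justification is that prescribing the single graded component $E$ (with $E$ ranging over a homogeneous basis one element at a time) suffices because $\mathfrak{L}(V)$ is spanned by such $E_{[d]}$, and because the block upper-triangular form of the transition functions (as recalled in the proof of Proposition~\ref{prop:chiralnodal}) guarantees that a section chosen in a single diagonal block extends to an honest section of $\mathscr{V}_C\otimes\omega_C$ without disturbing the lower-degree data modulo the relevant truncation.
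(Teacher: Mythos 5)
Your proposal is correct and follows essentially the same route as the paper: lift elements of $\mathscr{L}_{C\setminus P_\bullet\sqcup Q_\bullet}(V)$ to sections of $\mathscr{V}_C\otimes\omega_C$ on the affine open set, identify these via Lemmas~\ref{lemma:gr} and~\ref{lem:small} with sections of $\oplus_{k\geq 0}\bigl(\omega_C^{\otimes 1-k}\bigr)^{\oplus\dim V_k}$, and invoke the Riemann--Roch statement of \S\ref{sec:RR} to prescribe the Laurent data at the $Q_j$ modulo order $N$. The bookkeeping you flag does work out as you hope (the composition \eqref{eq:sigmatosigmaQpm} sends the $(1-k)$-differential $s_i^d(ds_i)^{1-k}$ in the $E$-summand to $\mathrm{Res}_{s_i=0}\,Y[E,s_i]\,s_i^d\,ds_i=E_{[d]}$, with no index shift), so no further argument is needed.
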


\begin{proof}
Since $C\setminus P_\bullet$ is affine, so is $C\setminus P_\bullet\sqcup Q_\bullet$.
As in Proof of Proposition \ref{prop:chiralnodal}, elements of $\Ll_{C\setminus P_\bullet\sqcup Q_\bullet}(V)$ can be lifted to sections of $\Vv_C\otimes \omega_C$ on $C\setminus P_\bullet\sqcup Q_\bullet$, and thus, via Lemma \ref{lem:small}, described as sections of $\oplus_{k\geq 0}\left(\omega_C^{\otimes 1-k}\right)^{\oplus \dim V_k}$ on $C\setminus P_\bullet\sqcup Q_\bullet$. The statement thus follows from the analogous property of sections of tensor products of $\omega_C$, discussed in \S\ref{sec:RR}.
\end{proof}


\section{Spaces of coinvariants}
\label{Coinvariants}

Given a stable pointed curve $(C,P_\bullet)$ and a vertex operator algebra $V$, we define spaces of coinvariants using representations of the chiral Lie algebra.

\subsection{Representations of the chiral Lie algebra}
The chiral Lie algebra $\Ll_{C\setminus P_\bullet}(V)$ acts on the tensor product $M^{\bullet}:=M^1\otimes \cdots \otimes M^n$ of $V$-modules $M^1,\dots, M^n$.     
For each~$i$, let $t_i$ be a formal coordinate at~$P_i$,  and $\mathfrak{L}_{t_i}(V)$ be the  Lie algebra  ancillary to $V$ (\S\ref{LV}). 
 Each  $\mathfrak{L}_{t_i}(V)$ acts on the $V$-module $M^i$ as in \S\ref{LV},  and the sum $\oplus_{i=1}^n \mathfrak{L}_{t_i}(V)$ acts diagonally on $M^{\bullet}$.
 The map $\eqref{phiL}$ thus induces an action of $\Ll_{C\setminus P_{\bullet}}(V)$  on $M^{\bullet}$  as follows: for $\sigma\in \Ll_{C\setminus P_{\bullet}}(V)$ and $A^i\in M^i$, one has
\[
\sigma  \left(A^1\otimes \cdots \otimes A^n\right) = \mbox{$\sum_{i=1}^n$} A^1\otimes \cdots \otimes \sigma_{P_i} \left(A^i \right) \otimes \cdots \otimes A^n,
\]
where $\sigma_{P_i}$ is the restriction of the section $\sigma$ to the punctured formal disk $D^\times_{P_i}$   about $P_i$ on $C$.

\subsection{Coinvariants}\label{sec:Coinvariants}
When $C\setminus P_\bullet$ is affine, the \textit{space of coinvariants} at $(C,P_{\bullet}, t_\bullet)$ is
\[
\mathbb{V}\left(V;M^{\bullet}\right)_{(C,P_{\bullet}, t_\bullet)} := M^{\bullet}_{\Ll_{C\setminus P_{\bullet}}(V)} = M^{\bullet}\big/\Ll_{C\setminus P_{\bullet}}(V) \cdot M^{\bullet}.
\]
This is the largest quotient of  $M^{\bullet}$ on which $\Ll_{C\setminus P_{\bullet}}(V)$ acts trivially.  In general, when $C\setminus P_\bullet$ is not necessarily affine, the \textit{space of coinvariants} at $(C,P_{\bullet}, t_\bullet)$ is defined as the direct limit
\begin{equation}
\label{eq:coinvdef}
\mathbb{V}\left(V;M^{\bullet}\right)_{(C,P_{\bullet}, t_\bullet)} := \varinjlim_{(Q_\bullet, s_\bullet)} \mathbb{V}\left(V;M^{\bullet}\sqcup(V,\dots,V)\right)_{(C,P_{\bullet}\sqcup Q_\bullet, t_\bullet\sqcup s_\bullet)}
\end{equation}
where $Q_\bullet=(Q_1,\dots, Q_m)$ ranges over the set of smooth points of $C$ such that $P_\bullet \cap Q_\bullet=\emptyset$ and $C\setminus P_\bullet\sqcup Q_\bullet$ is affine, and $s_\bullet=(s_1,\dots,s_m)$, with $s_i$ a formal coordinate at $Q_i$, for each $i$. 
As in the case of affine Lie algebras \cite{fakhr, loowzw},
the above direct limit is well defined thanks to the propagation of vacua theorem, which is discussed in \S \ref{sec:POV}.

The construction of the chiral Lie algebra in \S \ref{Chiral} extends to families of \textit{smooth} pointed curves over an arbitrary smooth base, and one obtains sheaves of coinvariants as follows. Let $(C\rightarrow S, P_\bullet)$ be a family of smooth $n$-pointed curves. In this case, $C\setminus P_\bullet (S)$ is affine over $S$. Let $t_i$ be formal coordinates at $P_i(S)$, for $i=1,\dots,n$. Equivalently, fix a formally unramified thickening $S\times \mathrm{Spf}(\mathbb{C}\llbracket t_i\rrbracket)\rightarrow C$ of the section $P_i$, for each $i$. One then obtains  sheaves of Lie algebras $\Ll_{{C}\setminus P_\bullet}(V)$ and of coinvariants 
\[
\mathbb{V}(V; M^{\bullet})_{\left({C}/S, P_\bullet, t_\bullet\right)}:=
\left(M^\bullet \otimes \mathscr{O}_S\right)_{\Ll_{{C}\setminus P_\bullet}(V)}
\]
over $S$, for given $V$-modules $M^1,\dots,M^n$. 

We will extend  sheaves of coinvariants over families of \textit{stable} pointed curves over an arbitrary smooth base in \S\ref{sec:sheafofcoinariantsfinal}.

\subsection{Propagation of vacua}\label{sec:POV}
The propagation of vacua theorem, first proved by Tsuchiya, Ueno, and Yamada  for spaces of coinvariants constructed from representations of affine Lie algebras  \cite[Prop 2.2.3, Cor 2.2.4]{tuy},  says that spaces of coinvariants associated to a stable $n$-pointed curve with coordinates remain invariant when adding a new marked point and the trivial module.  

The result was established in the generality we need here by Codogni \cite[Thm 3.6]{codogni}  (see also \cite[Thm 6.2]{dgt}). 
Other special cases were previously treated in the literature, including  the case of coinvariants defined by quasi-primary generated vertex operator algebras $V$ for which $V_0\cong \mathbb{C}$  either at a fixed smooth pointed curve with coordinates \cite{ZhuGlobal, an1}, or on stable pointed rational curves \cite{nt}.    Moreover, propagation of vacua was proved for conformal blocks defined at a fixed smooth curve  in \cite[\S 10.3.1]{bzf}.

To state the theorem, we need the following setup. Let $\left({C}\rightarrow S, P_\bullet, t_\bullet\right)$ be a family of stable $n$-pointed curves with coordinates.  Let \mbox{$Q\colon S\rightarrow {C}$} be a section such that $Q(S)$ is contained in the smooth locus of ${C}$ and is disjoint from $P_i(S)$, for each $i \in \{1,\ldots, n\}$, and let $r$ be a formal coordinate at $Q(S)$. 

\begin{theorem}[Propagation of Vacua {\cite[Thm 3.6]{codogni}}]
\label{thm:POV}
Let $V$ be a  vertex operator algebra with one-dimensional weight zero space.
Assume that \mbox{${C}\setminus P_\bullet(S)$} is affine over $S$. The linear map 
\[
M^\bullet \rightarrow M^\bullet \otimes V, \qquad u\mapsto u\otimes \bm{1}^V
\]
 induces a canonical $\mathscr{O}_S$-module isomorphism
\[
\mathbb{V}\left(V;M^\bullet\right)_{\left({C}/ S, P_\bullet, t_\bullet\right)} \xrightarrow{\cong}
\mathbb{V}\left(V;M^\bullet\otimes V\right)_{\left({C}/ S, P_\bullet \sqcup Q, t_\bullet\sqcup r\right)}.
\]
Varying $(Q,r)$, the induced isomorphisms are compatible. Moreover, as $\bm{1}^V$ is fixed by the action of $\mathrm{Aut}\,\mathcal{O}$, the isomorphism is equivariant with respect to change of coordinates.
\end{theorem}

The proof requires two main ingredients: (1) the axiom on the vacuum vector;  and (2) the existence of a PBW basis for $V$  \cite{GN}.


\section{Finite-dimensionality of  coinvariants}
\label{sec:FINITESection}

Using coinvariants by the action of Zhu's Lie algebra (\S\ref{sec:CoinvariantsHistory}), Abe and Nagatomo show that spaces of coinvariants at smooth pointed curves  of arbitrary genus are finite-dimensional \cite{an1}. 

 We show here that the result of \cite{an1} extends to coinvariants by the action of the chiral Lie algebra. 
Moreover, we further extend the result in \cite{an1} by allowing the following twist of the chiral Lie algebra: given a smooth $n$-pointed curve
$(C, P_\bullet)$, and an effective divisor $D=\sum_{i=1}^m n_i Q_i$ on~$C$ not supported at $P_{\bullet}$, consider
\begin{equation} \label{eq:Lcp(D)}
\Ll_{C\setminus P_\bullet}(V,D):= \text{H}^0\left(C\setminus P_\bullet, \Vv_C\otimes \omega_C(-D)/ \textrm{Im}\nabla\right),
\end{equation} 
where $\textrm{Im}\nabla$ denotes the intersection of $\nabla(\Vv_C)$ and $\Vv_C\otimes \omega_C(-D)$.
This is the space of sections in $\Ll_{C\setminus P_\bullet}(V)$ vanishing with order at least $n_i$ at $Q_i$, for each $i$,
and gives a Lie subalgebra of $\Ll_{C\setminus P_\bullet}(V)$.

\subsection{$C_2$-cofiniteness}
\label{sec:C2}
For $k\geq 2$ and  a $V$-module $M$ (e.g., $M=V$), set:
\[
C_k(M):=\mathrm{span}_{\mathbb{C}}\left\{A_{(-k)}m \,:\, A \in V,\, m \in M\right\}.
\]
One says that $M$ is \textit{$C_k$-cofinite} if $\dim_{\mathbb{C}} M/C_k(M)<\infty$.  
For  a  $C_2$-cofinite vertex operator algebra $V$ with one-dimensional weight zero space,
any finitely generated $V$-module is $C_k$-cofinite, for $k\geq 2$ \cite{BuhlSpanning}. As explained in \cite{ArakawaC2Lisse}, the $C_2$-cofiniteness has a natural geometric interpretation which generalizes the concept of lisse modules introduced in \cite{bfm} for the Virasoro algebra.

\begin{proposition}
\label{prop:FiniteTwisted}
Let $V$ be a  $C_2$-cofinite vertex operator algebra with one-dimensional weight zero space.
Let $C$ be a smooth curve with distinct  points $P_1,\dots, P_n$, and $D$ an effective divisor on $C$ not supported at $P_\bullet$.  Fix formal coordinates $t_i$ at $P_i$, for each $i$.
For finitely generated $V$-modules $M^1,\ldots, M^n$, the  coinvariants $M^{\bullet}_{\Ll_{C\setminus P_ \bullet}(V,D)}$ are finite-dimensional.
\end{proposition}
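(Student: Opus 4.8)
The plan is to follow the strategy of Abe--Nagatomo \cite{an1}, transporting it from Zhu's Lie algebra to the chiral Lie algebra and adding the twist by $D$; the two structural inputs are Buhl's spanning theorem (a consequence of $C_2$-cofiniteness, \cite{BuhlSpanning}) and the Riemann--Roch statement for chiral Lie algebras, Proposition \ref{prop:chiralRR}.

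\emph{Reduction and setup.} We may assume $C\setminus P_\bullet$ is affine: if not, adjoin finitely many smooth points labeled by $V$, chosen away from $\operatorname{supp}D$, so that the complement becomes affine; by propagation of vacua for the $D$-twisted chiral Lie algebra (as in \cite{dgt}) this does not change the coinvariants. It then suffices to show $\dim_{\mathbb{C}} M^{\bullet}/\mathscr{L}_{C\setminus P_\bullet}(V,D)\cdot M^{\bullet}<\infty$. Since $V$ is $C_2$-cofinite with $V_0\cong\mathbb{C}$ it is strongly generated by a finite set of homogeneous elements $a^1,\dots,a^r$ of strictly positive degree, and by Buhl's theorem each finitely generated module $M^i$ contains a finite-dimensional subspace $U^i$ so that $M^i$ is spanned by the vectors $a^{j_1}_{(-n_1)}\cdots a^{j_k}_{(-n_k)}u$ with $u\in U^i$, $k\ge 0$, and $n_1\ge\cdots\ge n_k\ge 1$. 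Consequently every graded piece $M^i_d$ is finite-dimensional and there is $d_0$ with $M^i_{>d_0}\subseteq\sum_{j,\,n\ge1}a^j_{(-n)}M^i$.

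\emph{The descent.} Consider the exhaustive filtration $M^{\bullet}=\bigcup_d M^{\bullet}_{\le d}$, where $M^{\bullet}_{\le d}=\bigoplus_{d_1+\cdots+d_n\le d}M^1_{d_1}\otimes\cdots\otimes M^n_{d_n}$ is finite-dimensional. I claim there is $D_0$ with $M^{\bullet}_{\le d}\subseteq M^{\bullet}_{\le d-1}+\mathscr{L}_{C\setminus P_\bullet}(V,D)\cdot M^{\bullet}$ for all $d\ge D_0$; iterating down to $M^{\bullet}_{\le D_0}$ then gives the finite-dimensionality. By Buhl's spanning set it is enough to treat a single tensor of monomials $x=m^1\otimes\cdots\otimes m^n$ of total degree $d\ge D_0$; taking $D_0>n\,d_0$, some factor $m^i$ has degree $>d_0$, hence $m^i=a^j_{(-n)}m'$ with $n\ge1$ and $\deg m'<\deg m^i$. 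By Proposition \ref{prop:chiralRR}, applied on $C$ with auxiliary points $\{P_\ell:\ell\ne i\}\cup\operatorname{supp}D$, with $E=a^j$, integer exponent $-n$, and $N$ large (at least the multiplicities of $D$ and at least a bound named below), there is $\sigma\in\mathscr{L}_{C\setminus P_\bullet}(V,D)$ with $\sigma_{P_i}\equiv a^j_{[-n]}\pmod{\mathfrak{L}_{P_i}(V,NP_i)}$ and $\sigma_{P_\ell}\equiv 0\pmod{\mathfrak{L}_{P_\ell}(V,NP_\ell)}$ for $\ell\ne i$. Acting with $\sigma$ on $m^1\otimes\cdots\otimes m'\otimes\cdots\otimes m^n$ reproduces $x$ plus a remainder $R$ gathering the action of $\sigma_{P_i}-a^j_{[-n]}$ on $m'$ at the $i$-th factor and of $\sigma_{P_\ell}$ on $m^\ell$ at the other factors; since $\sigma\cdot(-)\in\mathscr{L}_{C\setminus P_\bullet}(V,D)\cdot M^{\bullet}$, it remains to check $R\in M^{\bullet}_{\le d-1}$.

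\emph{The main obstacle.} The heart of the matter, exactly as in \cite{an1}, is the uniform control of $R$. By Lemmas \ref{lemma:gr} and \ref{lem:small}, over the affine $C\setminus P_\bullet$ the sheaf $\mathscr{V}_C\otimes\omega_C(-D)$ is a \emph{finite} direct sum of line bundles $\omega_C^{\otimes 1-k}(-D)$, and to realize a mode coming from $V_{\le \deg a^j}$ one only needs the summands with $k\le\deg a^j$ (the trivializations mix $V$-degrees downward only). Hence $\sigma$ can be chosen of $V$-degree bounded by a constant $K$ depending only on $\deg a^j$, in particular independent of $N$, so that $\sigma_{P_i}-a^j_{[-n]}$ and the $\sigma_{P_\ell}$ ($\ell\ne i$) act through operators $B_{(m)}$ with $m\ge N$ and $B\in V_{\le K}$; such an operator shifts $L_0$-degree by $\deg B-m-1\le K-N-1$. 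Taking $N\ge K$ and using $\deg a^j_{(-n)}=\deg a^j+n-1\ge1$, a direct bookkeeping on the tensor degrees shows every monomial in $R$ has total $L_0$-degree at most $d-\deg a^j-n<d$, i.e. $R\in M^{\bullet}_{\le d-1}$, proving the claim. The twist by $D$ enters only through the extra vanishing imposed at $\operatorname{supp}D$, which Proposition \ref{prop:chiralRR} already accommodates. I expect the delicate point to be precisely this last step: making the $V$-degree bound on the transporting sections uniform in $N$ (and in $d$), since that is what allows the induction on $L_0$-degree to close.
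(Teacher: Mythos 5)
Your overall architecture (filter $M^{\bullet}$ by total $L_0$-degree, descend by realizing negative modes as restrictions of global sections, control the error terms by bounding the $V$-degree of the transporting section) is the same as the paper's, and your final degree bookkeeping and the uniform bound $B\in V_{\le \deg a^j}$ are correct — that last point is handled in one line in the paper and is not where the difficulty lies. The genuine gap is in the step where you produce, for \emph{every} $n\ge 1$, a section $\sigma\in\mathscr{L}_{C\setminus P_\bullet}(V,D)$ with $\sigma_{P_i}\equiv a^j_{[-n]}$ modulo high-order terms. Proposition \ref{prop:chiralRR} does not give this: it prescribes the leading behavior only at the auxiliary points $Q_\bullet$, where the sections it produces are regular up to the prescribed term, and it achieves this precisely because unbounded poles are permitted at the $P_\bullet$ to absorb the Riemann--Roch obstruction. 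You are asking instead for a prescribed \emph{principal part} at one of the points $P_i$ where the poles live, with no auxiliary divisor left to twist by. Concretely, realizing $a^j_{[-n]}$ requires a section of $\omega_C^{\otimes 1-k}(-D)$ on $C\setminus P_\bullet$ with a pole of exact order governed by $n$ at $P_i$; for small $n$, positive genus, and nontrivial $D$, such sections need not exist (Weierstrass gaps). So the claim ``$m^i=a^j_{(-n)}m'$ with $n\ge1$ can always be transported'' fails for the low modes, and your induction cannot start from Buhl's spanning set as stated.

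This is exactly why the paper (following \cite{an1}) first fixes a threshold $N$ via Riemann--Roch and the Weierstrass gap theorem so that $H^0\left(C,\omega_C^{\otimes 1-k}(lP_i-D)\right)\neq\emptyset$ for all $l\ge N$ and $k\le d_U$, uses the geometry only to kill the deep modes $A_{(-l)}m$ with $l\ge N$ (the subspace $C_N(U,M^i)$), and then needs the purely algebraic input \cite[Prop.~4.5]{an1} — that $C_k(M^i)\subset C_N(U,M^i)$ for some $k$, combined with $C_k$-cofiniteness of finitely generated modules from \cite{BuhlSpanning} — to conclude that $M^i/C_N(U,M^i)$ is finite-dimensional. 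Your proof omits this second input entirely, and without it the argument does not close. To repair it, replace ``$n\ge1$'' by ``$l\ge N$'' in the descent and add the $C_N(U,M^i)$ finite-codimension step; alternatively note that the paper phrases the descent via the associated graded of the filtered module $M^\bullet$, which packages the same bookkeeping you do by hand.
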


\begin{proof}
 Recall the map from \eqref{eq:chiraltosingleancillary} obtained by fixing the formal coordinates $t_i$ at $P_i$, for each $i$: 
 $\Ll_{C\setminus P_ \bullet}(V,D)\rightarrow \mathfrak{L}_{P_i}(V)$, $\sigma\mapsto \sigma_{P_i}$. For $k\in \mathbb{N}$, define
\[
\mathcal{F}_k\,\Ll_{C\setminus P_ \bullet}(V,D) := \left\{\sigma \in \Ll_{C\setminus P_ \bullet}(V,D) \, | \, \deg \sigma_{P_i}\leq k, \, \mbox{for all $i$}  \right\},
\]
which gives $\Ll_{C\setminus P_ \bullet}(V,D)$ the structure of a filtered Lie algebra. Let
\[
\mathcal{F}_{k}M^{\bullet}=\bigoplus_{0\leq d \leq k}M^{\bullet}_d, \quad\mbox{where}\quad
M^{\bullet}_d:=\sum_{d_1+\cdots+d_n=d} M^1_{d_1} \otimes \cdots \otimes M^n_{d_n}.
 \]
Since $\mathcal{F}_{k}\,\Ll_{C\setminus P_ \bullet}(V,D)\cdot \mathcal{F}_{l}M^{\bullet} \subset \mathcal{F}_{k+l}M^{\bullet}$,  the $\Ll_{C\setminus P_ \bullet}(V,D)$-module $M^\bullet$ is a filtered $\Ll_{C\setminus P_ \bullet}(V,D)$-module.
One has an induced filtration on $M^\bullet_{\Ll_{C\setminus P_ \bullet}(V,D)}$:
\[
\mathcal{F}_{k} \left( M^\bullet_{\Ll_{C\setminus P_ \bullet}(V,D)}\right) :=
\left( \mathcal{F}_{k} M^\bullet + \Ll_{C\setminus P_ \bullet}(V,D)\cdot  M^\bullet\right)\big/\Ll_{C\setminus P_ \bullet}(V,D) \cdot M^\bullet.
\]

\noindent \textit{Step 1.} Let $U$ be a finite-dimensional subspace of $V$ such that $V=U\oplus C_2(V)$. 
Contrary to \cite{an1}, elements of $U$ are not required to be quasi-primary here.
Let $d_U$ be the maximum of the degree of the homogeneous elements in $U$. 
Similar to \cite[Lemma 4.1]{an1}, by an application of the Riemann-Roch and the Weierstrass gap theorem, there exists an integer $N$ such that
\[
\text{H}^0\left(C,\omega_C^{\otimes 1-k}\left(lP_i-D\right)\right)\neq 0, \quad\mbox{for all } k\leq d_U, \, l\geq N, \,i\in\{1,\dots,n\}.
\]

\noindent \textit{Step 2.} For a $V$-module $M$ and with $N$ as in Step 1, define the subset
\[
C_N(U,M)=\mathrm{span}_{\mathbb{C}}\left\{A_{(-l)}m \,:\, A \in U, \, m \in M, \, l\geq N\right\}.
\]
We claim that for each $i$ the set $M^1\otimes \cdots \otimes C_{N}\left(U,M^i\right) \otimes \cdots \otimes M^{n}$ is in the kernel of the canonical surjective linear map 
\[
M^{\bullet}
\overset{\pi}{\twoheadrightarrow} \mathrm{gr}_{\bullet}\left(M^\bullet_{\Ll_{C\setminus P_ \bullet}(V,D)} \right)
:= \underset{k\geq 0}{\oplus} \mathcal{F}_{k} \left( M^\bullet_{\Ll_{C\setminus P_ \bullet}(V,D)}\right)\big/ \mathcal{F}_{k-1} \left( M^\bullet_{\Ll_{C\setminus P_ \bullet}(V,D)}\right).
\]
For this, it is enough to show that
$\pi\left(m_1\otimes \cdots \otimes A_{(-l)}m_i\otimes \cdots \otimes m_n \right)=0,$
for homogeneous $A\in U$ of degree $a$, $m_i\in M^i_{d_i}$, and $l\geq N$. Note that $C\setminus P_i$ is affine for all $i$. As in the proof of Proposition \ref{prop:chiralnodal}, elements of $\Ll_{C\setminus P_i}(V,D)\subset \Ll_{C\setminus P_\bullet}(V)$ can be lifted to sections of $\Vv_C\otimes \omega_C(-D)$ on $C\setminus P_i$. By Lemmas \ref{lem:small} and \ref{lemma:gr}, the vector space of such sections is  isomorphic to the space of sections of 
\begin{equation}
\label{eq:Vkomega}
\oplus_{k\geq 0} V_k\otimes \omega_C^{\otimes 1-k}(-D)
\end{equation}
on $C\setminus P_i$. 
Following Step 1, there exists a section $\sigma=A\otimes \mu$ of \eqref{eq:Vkomega} on $C\setminus P_i$ such that its image via the map $\Ll_{C\setminus P_ i}(V,D)\rightarrow \mathfrak{L}_{P_i}(V)$ from \eqref{eq:chiraltosingleancillary} is
\[
\sigma_{P_i}=  A_{[-l]} + \sum_{j>-l} c_j A_{[j]}, \quad \mbox{for some $c_j\in\mathbb{C}$.}
\]
One has $A_{[-l]}\cdot M^i_{d_i}\subset M^i_{d_i+a+l-1}$ and $A_{[j]}\cdot M^i_{d_i}\subset M^i_{d_i+a+l-2}$ for $j>-l$.
Moreover, since $\mu$ is holomorphic at a point $P_r\neq P_i$, one has 
$\sigma_{P_r}=\sum_{s\geq 0}a_s A_{[s]}$, for some $a_s\in\mathbb{C}$.
 It follows that $\sigma_{P_r}\cdot M^r_{d_r}\subset M^r_{d_r+a-1}$. 
From the identity
\[
\sigma \left(m_1\otimes \cdots\otimes m_n\right) =  \mbox{$\sum_{r=1}^n$}\,  m_1\otimes \cdots\otimes \sigma_{P_r} (m_r) \otimes \cdots\otimes m_n,
\]
one has
\[
m_1\otimes \cdots \otimes A_{(-l)}m_i\otimes \cdots \otimes m_n \in \mathcal{F}_{\sum_r d_r +a+l-2}M^\bullet+ \Ll_{C\setminus P_ \bullet}(V,D)\cdot  M^\bullet.
\]
Since the element on the left-hand side is in $\mathcal{F}_{\sum_r d_r +a+l-1}M^\bullet$, it follows that it maps to zero via $\pi$. The claim follows.

\noindent \textit{Step 3.} After Step 2, the map $\pi$ factors through
\begin{equation}
\label{eq:pithrough}
M^1/C_N\left(U,M^1\right) \otimes \cdots \otimes M^n/C_N\left(U,M^n\right)  \overset{\pi}{\twoheadrightarrow} \mathrm{gr}_{\bullet}\left(M^\bullet_{\Ll_{C\setminus P_ \bullet}(V,D)} \right).
\end{equation}
By \cite[Prop.~4.5]{an1},  there is a positive integer $k$ such that $C_k\left(M^i\right) \subset C_N\left(U,M^i\right)$ for all $i$.  In particular,
$\dim M^i/C_N\left(U,M^i\right)<\dim M^i/C_k\left(M^i\right)$.
These are finite as the $M^i$ are all $C_k$-cofinite by \cite{BuhlSpanning}.
It follows that the source in \eqref{eq:pithrough} is finite-dimensional, hence so is the target.
This implies that the coinvariants are finite-dimensional as well.
\end{proof} 

The proof of Proposition \ref{prop:FiniteTwisted} extends over families of smooth curves \mbox{${C}\rightarrow S$} with $n$ disjoint sections $P_1, \dots, P_n$, and for each $i$,  a formal coordinate $t_i$ at $P_i(S)$.  Hence, we conclude:

\begin{corollary}
\label{cor:CoherenceOnMgn} 
Let $V$ be a   $C_2$-cofinite vertex operator algebra with one-dimensional weight zero space. For any collection of finitely generated $V$-modules $M^1, \dots, M^n$, the sheaf of coinvariants $\mathbb{V}(V; M^{\bullet})_{\left({C}/S, P_\bullet, t_\bullet\right)}$ is a coherent $\mathscr{O}_{S}$-module.
\end{corollary}


\section{The modules $Z$ and $\overline{Z}$}

In service of the proof of the factorization theorem,  we consider the modules ${Z}$ and $\overline{Z}$ in \S\ref{sec:defZZ}, and coinvariants constructed from them in \S\ref{sec:coinvZhat}.

\subsection{Definitions and properties}
\label{sec:defZZ}
Let $V$ be a vertex operator algebra. Recall the associative algebra $\mathscr{U}\!(V)$ from \S\ref{sec:UV}. Consider the $\mathscr{U}\!(V)^{\otimes 2}$-module
\[
Z:= \left( \mathrm{Ind}^{\mathscr{U}\!(V)}_{\mathscr{U}\!(V)_{\leq 0}} A(V) \right)^{\otimes 2} = \left(\mathscr{U}\!(V) \otimes_{\mathscr{U}\!(V)_{\leq 0}} A(V) \right)^{\otimes 2}
\]
where $\mathscr{U}\!(V)_{<0}$ acts trivially on $A(V)$, and the action of $\mathscr{U}\!(V)_{0}$ on $A(V)$ is induced from the projection $\mathscr{U}\!(V)_{0} \rightarrow A(V)$. With the notation from \S \ref{sec:irrVAVmod}, one has that $Z=M(A(V))^{\otimes 2}$, where $M(A(V))$ is the generalized Verma $\mathscr{U}\!(V)$-module induced from the natural representation $A(V)$ of $A(V)$. 

We will also consider a quotient $\overline{Z}$ of $Z$ defined as follows. Let $\mathscr{P}$ be the subalgebra of $\mathscr{U}\!(V)^{\otimes 2}$ generated by $\mathscr{U}\!(V)\otimes_{\mathbb{C}} \mathscr{U}\!(V)_{< 0}$,  \mbox{$\mathscr{U}\!(V)_{< 0}\otimes_{\mathbb{C}} \mathscr{U}\!(V)$}, and $\mathscr{U}\!(V)_0\otimes_{\mathbb{C}} \mathscr{U}\!(V)_0$.
Consider the $\mathscr{U}\!(V)^{\otimes 2}$-module
\[
\overline{Z}:= \mathrm{Ind}^{\mathscr{U}\!(V)^{\otimes 2}}_{\mathscr{P}} A(V) = \left(\mathscr{U}\!(V)^{\otimes 2}\right) \otimes_{\mathscr{P}} A(V)
\]
where $\mathscr{U}\!(V)\otimes_{\mathbb{C}} \mathscr{U}\!(V)_{<0}$ and $\mathscr{U}\!(V)_{<0}\otimes_{\mathbb{C}} \mathscr{U}\!(V)$ act trivially on $A(V)$, 
and the action of $\mathscr{U}\!(V)_0\otimes_{\mathbb{C}} \mathscr{U}\!(V)_0$ on $A(V)$ is induced  
via the natural surjection $\mathscr{U}\!(V)_0\otimes_{\mathbb{C}} \mathscr{U}\!(V)_0\rightarrow A(V)\otimes_\mathbb{C}A(V)$
from the action of $A(V)\otimes_\mathbb{C}A(V)$ given~by
\[
(a\otimes b)(c)= a\cdot c \cdot (-\vartheta(b)),\quad \mbox{for $a\otimes b\in A(V)\otimes A(V)$, $c\in A(V)$.}
\]

\begin{lemma}\label{lemma:ZZ}
Let $V$ be a rational vertex operator algebra. 
One has $\mathscr{U}\!(V)^{\otimes 2}$-module isomorphisms
\[
Z \cong \bigoplus_{W,Y\in \mathscr{W}} \left( W \otimes W_0^\vee\right)\otimes \left( Y\otimes Y_0^\vee\right) \ \text{and } \ \overline{Z} \cong \bigoplus_{W\in \mathscr{W}} W \otimes W', 
\]
with $\mathscr{W}$ the set of representatives of isomorphism classes of simple $V$-modules.
\end{lemma}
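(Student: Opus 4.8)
The plan is to deduce both isomorphisms from the Artin--Wedderburn structure of Zhu's algebra. Since $V$ is rational, $A(V)$ is finite-dimensional and semisimple (\S\ref{RVAS}), so $A(V)\cong\bigoplus_{W\in\mathscr{W}}\mathrm{End}_{\mathbb{C}}(W_0)$; by the correspondence of \S\ref{sec:irrVAVmod} the blocks are indexed by the (finitely many) simple $V$-modules $W$, with $W_0$ the associated simple $A(V)$-module. Viewing $A(V)$ as a left $A(V)$-module — equivalently as a $\mathscr{U}(V)_{\leq0}$-module with $\mathscr{U}(V)_{<0}$ acting by zero — this reads $A(V)\cong\bigoplus_{W}W_0\otimes_{\mathbb{C}}W_0^\vee$, where the factor $W_0^\vee$ is a multiplicity space.

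For $Z$: I would first note that generalized Verma induction $M(-)=\mathscr{U}(V)\otimes_{\mathscr{U}(V)_{\leq0}}(-)$ commutes with direct sums and with tensoring by a fixed vector space carrying the trivial $\mathscr{U}(V)_{\leq0}$-action, whence $M(A(V))\cong\bigoplus_{W}M(W_0)\otimes_{\mathbb{C}}W_0^\vee$ as $\mathscr{U}(V)$-modules. Since $V$ is rational, $M(W_0)$ is simple and hence equals $L(W_0)\cong W$ (\S\S\ref{RVAS},\ref{sec:irrVAVmod}), so $M(A(V))\cong\bigoplus_W W\otimes_{\mathbb{C}}W_0^\vee$; taking the outer tensor square gives $Z=M(A(V))^{\otimes2}\cong\bigoplus_{W,Y}(W\otimes W_0^\vee)\otimes(Y\otimes Y_0^\vee)$, which is the first assertion. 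This half is essentially bookkeeping once semisimplicity and the simplicity of generalized Verma modules over a rational $V$ are invoked.

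For $\overline{Z}$ the extra ingredient is the involution $\vartheta$, and this is where the real work lies. The plan is to unwind the $\mathscr{P}$-module structure on $A(V)$: restricted to $\mathscr{U}(V)_{\leq0}\otimes_{\mathbb{C}}\mathscr{U}(V)_{\leq0}$ it is $A(V)$ as an $A(V)$-bimodule, the left action being the usual one and the right action precomposed with $-\vartheta$. Decomposing by semisimplicity, $A(V)\cong\bigoplus_W W_0\otimes_{\mathbb{C}}W_0^\vee$ with the second $A(V)$-factor now acting on $W_0^\vee$ through the $-\vartheta$ twist; by Lemma~\ref{lem:contr} this twisted action is precisely the $A(V)$-structure of $(W')_0=W_0^\vee$ coming from the contragredient module $W'$. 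Thus, as $\mathscr{U}(V)_{\leq0}\otimes_{\mathbb{C}}\mathscr{U}(V)_{\leq0}$-modules, $A(V)\cong\bigoplus_W W_0\otimes_{\mathbb{C}}(W')_0$ with multiplicity one, so inducing up exhibits $\overline{Z}$ as a quotient of $\bigoplus_W M(W_0)\otimes M((W')_0)\cong\bigoplus_W W\otimes W'$; since $W\mapsto W'$ permutes $\mathscr{W}$ and each $W\otimes W'$ is a simple $\mathscr{U}(V)^{\otimes2}$-module (outer tensor product of simple modules), the right-hand side is semisimple, so $\overline{Z}\cong\bigoplus_{W\in S}W\otimes W'$ for some $S\subseteq\mathscr{W}$. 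Finally I would pin down $S=\mathscr{W}$ by comparing bidegree-$(0,0)$ parts: that of $\overline{Z}$ is the cyclic image of $A(V)$, of dimension $\sum_W(\dim W_0)^2$, which matches $\bigoplus_{W\in S}W_0\otimes(W')_0$ only when $S=\mathscr{W}$ — using that $a\mapsto 1\otimes a$ embeds $A(V)$ into $\overline{Z}$, which follows from faithful flatness of $\mathscr{U}(V)^{\otimes2}$ over the inducing subalgebra by a PBW/triangular-decomposition argument as in \S\ref{sec:irrVAVmod}. The main obstacle, then, is the correct identification via Lemma~\ref{lem:contr} of the $-\vartheta$-twisted action with the contragredient structure — this is what collapses the double sum appearing for $Z$ onto the diagonal — together with the routine but genuinely needed check that no block is lost in passing to $\overline{Z}$.
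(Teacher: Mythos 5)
Your argument is correct and follows essentially the same route as the paper: Wedderburn semisimplicity of $A(V)$, compatibility of generalized Verma induction with the block decomposition, simplicity of $M(W_0)$ for rational $V$, and the identification of the $(-\vartheta)$-twisted right action on $W_0^\vee$ with the contragredient structure $(W')_0$ via Lemma \ref{lem:contr} (the paper invokes the equivalent statement $M(E^\vee)\cong M(E)'$ from Nagatomo--Tsuchiya and then says "by linearity"). The one soft spot is your justification that no block dies in $\overline{Z}$: "faithful flatness of $\mathscr{U}(V)^{\otimes 2}$ over $\mathscr{P}$ by a PBW argument" does not literally apply to these completed topological algebras; the cleaner fix is to observe that the extra relations imposed by inducing from $\mathscr{P}$ rather than from $\mathscr{U}(V)_{\leq 0}^{\otimes 2}$ (namely, that elements with a strictly negative-degree factor annihilate $A(V)$) already hold in $\bigoplus_W W\otimes W'$, since negative-degree operators kill the degree-zero pieces $W_0$ and $(W')_0$, so the surjection from the $\mathscr{U}(V)_{\leq 0}^{\otimes 2}$-induction onto $\overline{Z}$ has trivial kernel.
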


\begin{proof}
Since $V$ is rational, the algebra $A(V)$ is semisimple \cite{zhu}.    From Wedderburn's theorem, one has $A(V)=\oplus_{E\in \mathscr{E}} \, E\otimes E^\vee$, where $\mathscr{E}$ is the finite set of representatives of isomorphism classes of simple $A(V)$-modules. Using  the one-to-one correspondence between simple $V$-modules and simple $A(V)$-modules \cite{zhu}, and rationality of $V$ which implies  that  the $V$-module induced from any simple $A(V)$-module is simple,  it follows that each simple $V$-module  is $W=\mathscr{U}\!(V)\otimes_{\mathscr{U}\!(V)_{\leq 0}}E$, for some $E\in \mathscr{E}$. Moreover, there exists a canonical $V$-module isomorphism 
$\mathscr{U}\!(V)\otimes_{\mathscr{U}\!(V)_{\leq 0}}E^\vee\cong (\mathscr{U}\!(V)\otimes_{\mathscr{U}\!(V)_{\leq 0}}E)'$, for $E\in \mathscr{E}$ \cite[Prop.~7.2.1]{nt}.
The statement follows by linearity.
\end{proof}

\subsection{Replacing coinvariants with $Z$}
\label{sec:coinvZhat}
The main result of this section is Proposition \ref{prop:coinvZhat},
which generalizes \cite[Prop.~7.2.2, Cor.~8.6.2]{nt}  to curves of arbitrary genus. 
The statement describes coinvariants of the action of  a Lie subalgebra $\mathcal{L}_{C\setminus P_\bullet}\left(V, \{Q_+, Q_-\}\right)$ of the chiral Lie algebra $\Ll_{C\setminus P_ \bullet}(V)$. 

We begin by defining $\mathcal{L}_{C\setminus P_\bullet}\left(V, \{Q_+, Q_-\}\right)$.
For this,  let $C$ be a smooth curve, possibly disconnected, with two nonempty, disjoint sets of distinct marked points $P_\bullet=(P_1,\dots,P_n)$ and $Q_\bullet=(Q_+, Q_-)$. Assume that $C\setminus P_\bullet$ is affine. 
After Lemmas \ref{lem:small} and \ref{lemma:gr}, one has
\[
\text{H}^0\left(C\setminus P_\bullet, \mathcal{V}_C\right)\cong\oplus_{k\geq 0}\, \text{H}^0\left(C\setminus P_\bullet, V_k\otimes_{\mathbb{C}}\omega_C^{\otimes -k}\right).
\]
Fixing an isomorphism, consider the following Lie subalgebra of the chiral Lie algebra $\Ll_{C\setminus P_ \bullet}(V)$:
\begin{equation}\label{eq:modifiedchiral}
\mathcal{L}_{C\setminus P_\bullet}\left(V, \{Q_+, Q_-\}\right):= \dfrac{\bigoplus_{k\geq 0}\, \text{H}^0\left( C\setminus P_\bullet, V_k\otimes_{\mathbb{C}}\omega_C^{\otimes 1-k}(-k Q_+ -k Q_-) \right)}{\nabla \text{H}^0(C \setminus P_\bullet, \mathcal{V}_C)}.
\end{equation}
As in \eqref{eq:Lcp(D)},   $\nabla \text{H}^0(C \setminus P_\bullet, \mathcal{V}_C)$ is the intersection of $\mathrm{Im}\nabla$ with the subspace $\bigoplus_{k\geq 0}\, \text{H}^0\left( C\setminus P_\bullet, V_k\otimes_{\mathbb{C}}\omega_C^{\otimes 1-k}(-k Q_+ -k Q_-) \right)$ of $\text{H}^0(C\setminus P_\bullet, \mathcal{V}_C \otimes \omega_C)$.
 
Select  formal coordinates $t_i$ at  $P_i$ and $s_i$ at $Q_i$. Let
 $\mathfrak{L}_{P_\bullet}(V):=\oplus_{i=1}^n\mathfrak{L}_{P_i}(V)$ and $\mathfrak{L}_{Q_\bullet}(V):=\mathfrak{L}_{Q_+}(V)\oplus \mathfrak{L}_{Q_-}(V)$. There are  
Lie algebra injections
\begin{equation}
\label{eq:resmaps}
\mathcal{L}_{C\setminus P_\bullet}\left(V, \{Q_+, Q_-\}\right)\rightarrow \mathfrak{L}_{P_\bullet}(V) 
\,\, \mbox{and} \,\,  \mathcal{L}_{C\setminus P_\bullet}\left(V, \{Q_+, Q_-\}\right)\rightarrow \mathfrak{L}_{Q_\bullet}(V).
\end{equation}
The image of an element of \eqref{eq:modifiedchiral} in $\mathfrak{L}_{Q_\bullet}(V)\cong \mathfrak{L}(V)^{\oplus 2}$ via the restriction map in $\eqref{eq:resmaps}$ is
\[
\sum_{i\geq k} a_i A_{[i]} \oplus \sum_{j\geq k} b_j A_{[j]} \quad\in\quad \mathfrak{L}(V)_{<0}^{\oplus 2}\subset \mathfrak{L}_{Q_\bullet}(V)
\]
for homogeneous $A\in V$ of degree $k\geq 0$ and coefficients $a_i, b_j\in\mathbb{C}$.

We use here the assumption that $V$ is \textit{$C_1$-cofinite}, i.e., \mbox{$\dim_{\mathbb{C}} V/C_1(V) <\infty$}, where $C_1(V)$ is the subspace of $V$ linearly spanned by $A_{(-1)}B$ for $A,B\in V_{>0}$ and by $L_{-1}V$. If $V$ is $C_1$-cofinite and $V_0\cong \mathbb{C}\bm{1}^V$, then lowest weight  $V$-modules admit spanning sets of PBW-type \cite{KarelLi}. Also, $C_2$-cofinitess implies $C_1$-cofinitess \cite{KarelLi}, hence the assumption here is weaker.

\begin{proposition}\label{prop:coinvZhat}
Consider $(C,P_\bullet\sqcup Q_\bullet, t_\bullet\sqcup s_\bullet)$, i.e., a smooth coordinatized $(n+2)$-pointed curve, possibly disconnected, such that $C\setminus P_\bullet$ is affine. Let $V$ be a  $C_1$-cofinite vertex operator algebra with one-dimensional weight zero space, 
and let $M^1, \ldots, M^n$ be $V$-modules. The map 
\[
M^\bullet \to M^\bullet \otimes_{\mathbb{C}} Z, \qquad w\mapsto w\otimes \bm{1}^{A(V)}\otimes \bm{1}^{A(V)},
\]
where $\bm{1}^{A(V)}\in A(V)$ is the unit, induces an isomorphism of vector spaces
\[
h\colon M^\bullet_{\mathcal{L}_{C\setminus P_\bullet}\left(V, \{Q_+,Q_-\}\right)} \xrightarrow{\cong} \left( M^\bullet \otimes_{\mathbb{C}} Z \right)_{\mathcal{L}_{C\setminus P_\bullet\sqcup Q_\bullet}(V)}.
\] 
\end{proposition}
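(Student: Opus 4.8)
The plan is to follow the strategy of Nagatomo--Tsuchiya \cite[Prop.~7.2.2, Cor.~8.6.2]{nt}, now carried out with the chiral Lie algebra in place of Zhu's Lie algebra and fed by the geometric input of \S\ref{Chiral}, principally the Riemann--Roch statement Proposition \ref{prop:chiralRR}, so that the argument works in arbitrary genus. Write $\iota(w)=w\otimes\bm 1^{A(V)}\otimes\bm 1^{A(V)}$ and, for $\sigma$ in the subalgebra $\mathscr{L}_{C\setminus P_\bullet}(V,\{Q_1,Q_2\})$, let $o_i(\sigma)\in A(V)=M(A(V))_0$ be the image of the zero‑mode $[\sigma_{Q_i}]_0$ under $\mathfrak L(V)_0\to A(V)$. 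Because $\sigma_{Q_i}\in\mathfrak L(V)_{\le 0}$, the germ $\sigma_{Q_i}$ acts on the lowest‑weight vector $\bm 1^{A(V)}$ of $M(A(V))$ through $o_i(\sigma)$ alone, and the defining matching condition (the one appearing at a node in Proposition \ref{prop:chiralnodal}) gives $o_2(\sigma)=\vartheta(o_1(\sigma))$; hence in $M^\bullet\otimes Z$ one has the identity
\[
\sigma\cdot(w\otimes\bm 1^{A(V)}\otimes\bm 1^{A(V)})=\iota(\sigma\cdot w)+w\otimes o_1(\sigma)\otimes\bm 1^{A(V)}+w\otimes\bm 1^{A(V)}\otimes \vartheta(o_1(\sigma)).
\]
So it suffices to show that the composite $\bar\iota\colon M^\bullet\to M^\bullet\otimes Z\to(M^\bullet\otimes Z)_{\mathscr{L}_{C\setminus P_\bullet\sqcup Q_\bullet}(V)}$ is surjective with kernel exactly $\mathscr{L}_{C\setminus P_\bullet}(V,\{Q_1,Q_2\})\cdot M^\bullet$.

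For surjectivity I would use a Verma‑module reduction. By Lemma \ref{lemma:ZZ}, $Z=M(A(V))^{\otimes 2}$ with $M(A(V))=\mathscr U(V)\otimes_{\mathscr U(V)_{\le 0}}A(V)$, so every element of $M^\bullet\otimes Z$ is a sum of $w\otimes v_1\otimes v_2$ with $v_1,v_2$ PBW monomials $\mathbf u\cdot a$, $\mathbf u$ a product of elements of $\mathfrak L(V)_{>0}$ and $a\in A(V)$. Peeling off one factor $A_{[j]}\in\mathfrak L(V)_{>0}$ from $v_1$ and applying Proposition \ref{prop:chiralRR} to produce $\tau\in\mathscr{L}_{C\setminus P_\bullet\sqcup Q_\bullet}(V)$ with $\tau_{Q_1}\equiv A_{[j]}$ modulo $\mathfrak L_{Q_1}(V,NQ_1)$ and $\tau_{Q_2}\equiv 0$ modulo $\mathfrak L_{Q_2}(V,NQ_2)$ for $N$ large, the $\mathbb Z_{\ge 0}$‑grading (admissibility) of $M(A(V))$, of the second copy $M(A(V))$ at $Q_2$, and of the $M^i$ forces the high‑order tail of $\tau_{Q_1}$ and all of $\tau_{Q_2}$ to annihilate the bounded degrees occurring; hence $w\otimes v_1\otimes v_2$ is congruent modulo $\mathscr{L}_{C\setminus P_\bullet\sqcup Q_\bullet}(V)$ to a sum of terms with strictly shorter monomial in the first slot. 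Iterating on the lengths of $v_1$ and of $v_2$ reduces everything to $M^\bullet\otimes A(V)\otimes A(V)$, and one more application of the same device --- realizing $o(B)$ for $B\in V_k$ by $\tau_{Q_i}\equiv B_{[k-1]}$ and invoking the matching condition so that $\tau$ can be taken in $\mathscr{L}_{C\setminus P_\bullet}(V,\{Q_1,Q_2\})$ --- reduces $A(V)\otimes A(V)$ down to $\mathbb C\bm 1^{A(V)}\otimes\mathbb C\bm 1^{A(V)}$, giving surjectivity of $\bar\iota$.

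The delicate point is the identification $\ker\bar\iota=\mathscr{L}_{C\setminus P_\bullet}(V,\{Q_1,Q_2\})\cdot M^\bullet$. Since $Z$ is built from finitely generated modules, $(M^\bullet\otimes Z)_{\mathscr{L}_{C\setminus P_\bullet\sqcup Q_\bullet}(V)}$ is finite‑dimensional by Proposition \ref{prop:FiniteTwisted}, so it is equivalent to pass to the dual spaces of conformal blocks and to prove that restriction along $\iota$, $\Phi\mapsto\Phi\circ\iota$, is a bijection from $\mathscr{L}_{C\setminus P_\bullet\sqcup Q_\bullet}(V)$‑invariant functionals on $M^\bullet\otimes Z$ to $\mathscr{L}_{C\setminus P_\bullet}(V,\{Q_1,Q_2\})$‑invariant functionals on $M^\bullet$. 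Injectivity of this restriction is immediate from surjectivity of $\bar\iota$. For surjectivity one must extend a conformal block $\phi$ on $M^\bullet$ to one on $M^\bullet\otimes Z$: by $\mathscr{L}$‑invariance and the reduction above, the value of the extension $\Phi$ on $w\otimes v_1\otimes v_2$ is forced, by a recursion lowering the monomial length and terminating on $M^\bullet\otimes A(V)\otimes A(V)$, where it is pinned down by $\phi$ and the matching of zero‑modes. The substantive content is the \emph{consistency} of this recursion --- independence of the factor peeled off and of the section chosen in Proposition \ref{prop:chiralRR} --- and this is exactly what the correlation‑function formalism of Remark \ref{correlation} is designed to handle: one encodes $\Phi$ by its system of correlation functions on $C\setminus(P_\bullet\cup Q_\bullet)$, shows they are determined by $\phi$ together with the prescribed behaviour of the kernels at $Q_1,Q_2$, and checks that they patch. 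This consistency check is the main obstacle; it is where rationality of $V$ is essential, since by Lemma \ref{lemma:ZZ} it makes $M(A(V))\cong\bigoplus_{W}W\otimes W_0^\vee$ a sum of genuine simple $V$‑modules, allowing the extension to be constructed one summand $W\otimes W'$ at a time. Feeding the resulting bijection of conformal blocks back through finite‑dimensionality yields the isomorphism $h$.
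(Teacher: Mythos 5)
Your overall strategy is the paper's: well-definedness from the triviality of the action at $Q_\bullet$ on the lowest-weight vectors, surjectivity by peeling off PBW factors using Proposition \ref{prop:chiralRR}, and injectivity by dualizing and extending conformal blocks via correlation functions. But there are concrete problems. First, you have misidentified the subalgebra $\mathscr{L}_{C\setminus P_\bullet}(V,\{Q_1,Q_2\})$: by its definition \eqref{eq:modifiedchiral} (twist by $\omega_C^{\otimes 1-k}(-kQ_1-kQ_2)$ in degree $k$), its elements restrict at $Q_1,Q_2$ to $\mathfrak{L}(V)_{<0}$, so the zero-modes $o_i(\sigma)$ are \emph{zero} and there is no matching condition $o_2(\sigma)=\vartheta(o_1(\sigma))$ — that condition belongs to the image of the nodal chiral algebra in Proposition \ref{prop:chiralnodal}, which is a different (larger) subalgebra used later in the Factorization Theorem. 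As written, your displayed identity leaves two extra terms whose vanishing in coinvariants you never justify; they are in fact identically zero, so well-definedness survives, but for the wrong reason. The same confusion makes your final surjectivity step internally inconsistent: an element $\tau$ of $\mathscr{L}_{C\setminus P_\bullet}(V,\{Q_1,Q_2\})$ cannot satisfy $\tau_{Q_i}\equiv B_{[k-1]}$ (a degree-zero mode); the correct move, and what the paper does, is to take $\tau$ in the full $\mathscr{L}_{C\setminus P_\bullet\sqcup Q_\bullet}(V)$ and absorb $\tau_{P_\bullet}(w)$ into a new $w'$, which is already covered by your induction. Also, your appeal to Proposition \ref{prop:FiniteTwisted} is not available here: that proposition needs $C_2$-cofiniteness, which is not a hypothesis of Proposition \ref{prop:coinvZhat}; it is also unnecessary, since surjectivity of $h^\vee$ alone implies injectivity of $h$ with no finiteness.

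The genuine gap is the injectivity step, which is the substance of the proposition. You correctly reduce it to extending a $\mathscr{L}_{C\setminus P_\bullet}(V,\{Q_1,Q_2\})$-invariant functional $\Phi_{0,0}$ on $M^\bullet$ to an invariant functional $\Phi$ on $M^\bullet\otimes Z$, and you correctly flag that the issue is consistency of the putative recursive definition — but you then defer exactly that point to ``the correlation-function formalism,'' which is to restate the problem, not solve it. The paper does not define $\Phi$ by a recursion on monomial length (which would face precisely the well-definedness problem you acknowledge); instead it builds $\Phi_{l,m}$ on $M^\bullet\otimes V^{\otimes l+m}$ by iterating the propagation-of-vacua isomorphism of Remark \ref{rmk:propvacuapoles}, realizes these as meromorphic forms on $C^{l+m}$ via the strong residue theorem, trivializes $\mathscr{V}_C$ on a rational subset containing $P_\bullet$ but not $Q_\bullet$, and \emph{defines} $\Phi(w\otimes z_1\otimes z_2)$ by iterated contour integrals of these forms against the monomials encoding $z_1\otimes z_2$; compatibility with the relations of $Z$ and the invariance $\Phi(\sigma_{P_\bullet}(w)\otimes z_1\otimes z_2)=-\Phi(w\otimes\sigma_{Q_1}(z_1)\otimes z_2)-\Phi(w\otimes z_1\otimes\sigma_{Q_2}(z_2))$ are then verified by the residue theorem applied to a global meromorphic $1$-differential. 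None of this construction or verification appears in your proposal, so the key implication remains unproved. (Your remark on where rationality enters is also off: in this proposition it is used so that $Z$ decomposes into smooth, finitely generated pieces — e.g.\ to choose $N$ in the surjectivity step and in Lemma \ref{lemma:ZZ} — not to build the extension ``one simple summand at a time.'')
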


\begin{proof} 
We proceed in three steps.

\smallskip

\noindent
{\textit{Step 1}}\label{St61}.  We first show that the map $h$ is well-defined. Observe that $Z$ is naturally equipped with a left action of $\mathcal{L}_{C \setminus P_\bullet \sqcup Q_\bullet}(V)$ induced by the Lie algebra homomorphisms $\mathcal{L}_{C \setminus P_\bullet \sqcup Q_\bullet }(V) \to \mathfrak{L}_{Q_\bullet}(V) \to \mathscr{U}\!(V)^{\otimes 2}$. 
Given an element $\sigma\in \mathcal{L}_{C\setminus P_\bullet}\left(V, \{Q_+, Q_-\}\right)\subset \mathcal{L}_{C\setminus P_\bullet\sqcup Q_\bullet}(V)$, let $\sigma_{P_\bullet}$ be the image of $\sigma$ in $\mathfrak{L}_{P_\bullet}(V)$ via the restriction map in \eqref{eq:resmaps}, and similarly let $\sigma_{Q_i}$ be its image in $\mathfrak{L}_{Q_i}(V)$, for $i=1,2$. Since $\sigma_{Q_i}\in \mathfrak{L}(V)_{<0}$, 
the elements $\sigma_{Q_+}\otimes 1$ and $1\otimes \sigma_{Q_-}$ act trivially on $A(V)\otimes A(V)\subset Z$.
This implies
\begin{multline}
\sigma_{P_\bullet}(w)\otimes \bm{1}^{A(V)}\otimes \bm{1}^{A(V)} 
=\\
\sigma\left(w\otimes \bm{1}^{A(V)}\otimes \bm{1}^{A(V)} \right) - w\otimes \sigma_{Q_+}\left(\bm{1}^{A(V)}\right) \otimes \bm{1}^{A(V)} - w\otimes \bm{1}^{A(V)} \otimes \sigma_{Q_-}\left(\bm{1}^{A(V)}\right)\\
=  \sigma\left(w\otimes \bm{1}^{A(V)}\otimes \bm{1}^{A(V)} \right)
\end{multline}
for $w\in M^\bullet$. It follows that the image of any element is independent of 
the equivalence class representative in the quotient, hence the map $h$ between the spaces of coinvariants is well-defined.

\smallskip

\noindent\textit{Step 2.}\label{St62}
 Next, we show that the map $h$ is surjective:
Given $w\otimes z_1\otimes z_2$ in $M^\bullet \otimes Z$, there exists $w'\in M^\bullet$ such that
\[
w\otimes z_1\otimes z_2 \equiv w'\otimes \bm{1}^{A(V)}\otimes \bm{1}^{A(V)} \quad \mbox{mod $\mathcal{L}_{C\setminus P_\bullet\sqcup Q_\bullet}(V) \left(M^\bullet \otimes Z\right)$.}
\]

By linearity, and reordering elements in $\mathscr{U}\!(V)$, we can reduce to the case
\[
z_1\otimes z_2 = D_l \cdots D_1  \bm{1}^{A(V)} \otimes E_m \cdots E_1  \bm{1}^{A(V)},
\]
with each $D_i$ and $E_j$ in $\mathfrak{L}(V)_{\geq 0}$. The surjectivity is clear when $l=m=0$. By induction on $l$ (and similarly on $m$), it is then enough to show that
\[
w\otimes z_1\otimes z_2 \equiv w'\otimes z'_1\otimes z_2 \qquad \mbox{mod $\mathcal{L}_{C\setminus P_\bullet\sqcup Q_\bullet}(V) (M^{\bullet}\otimes Z)$}
\]
for some $w'$ in $M^\bullet$, when
$z_1=D_{[d]}\left( z'_1\right)$ for some homogeneous $D\in V$ and  $D_{[d]}$ in $\mathfrak{L}(V)_{\geq 0}$. 
Each component of the curve $C$ has at least one of the marked points in $P_\bullet$. 
By Proposition~\ref{prop:chiralRR}, there exists 
$\sigma\in \mathcal{L}_{C\setminus P_\bullet\sqcup Q_\bullet}(V)$
such that
\begin{align*}
\sigma_{Q_+} &\equiv D_{[d]}  &  \in &&  \mathfrak{L}_{Q_+}(V)/\mathfrak{L}_{Q_+}(V,N Q_+) , \\
\sigma_{Q_-} & \equiv 0  & \in &&\mathfrak{L}_{Q_-}(V)/\mathfrak{L}_{Q_-}(V,NQ_-),
\end{align*}
for $N\gg0$. It is enough to take $N$ such that 
\[
D_{[i]}(z'_1)\otimes z_2 =  z'_1\otimes D_{[i]}(z_2) =0  \quad \mbox{ in $Z$ for all $i\geq N$.}
\]
Such $N$ exists because $\mathscr{U}\!(V)$ acts smoothly on each factor. This implies
\[
\sigma_{Q_+}(z'_1)\otimes z_2 + z'_1\otimes \sigma_{Q_-}(z_2) = D_{[d]}\cdot z'_1\otimes z_2 = z_1\otimes z_2.
\]
It follows that
\[
w\otimes z_1\otimes z_2 =  \sigma (w\otimes z'_1\otimes z_2) -  \sigma_{P_\bullet} \left( w\right) \otimes z'_1\otimes z_2,
\]
hence 
\[
w\otimes z_1\otimes z_2 \equiv -  \sigma_{P_\bullet} \left( w\right) \otimes z'_1\otimes z_2 \qquad \mbox{mod $\mathcal{L}_{C\setminus P_\bullet\sqcup Q_\bullet}(V) (M^{\bullet}\otimes Z)$}.
\]
Repeating the same argument for $z_2$, the surjectivity of $h$ follows.\\

\smallskip

\noindent\textit{Step 3.} \label{St63} 
It remains to show that $h$ is injective.
Equivalently, we show the surjectivity of the dual map 
\[
h^\vee \colon \left(\left( M^\bullet \otimes_{\mathbb{C}} Z \right)_{\mathcal{L}_{C\setminus P_\bullet\sqcup Q_\bullet}(V)}\right)^\vee \rightarrow 
\left( M^\bullet_{\mathcal{L}_{C\setminus P_\bullet}\left(V, \{Q_+, Q_-\}\right)} \right)^\vee
\] given by 
\[
h^\vee(\Phi)(w)= \Phi\left(w \otimes \bm{1}^{A(V)}\otimes \bm{1}^{A(V)}\right), \qquad \mbox {for $w\in M^\bullet$.}
\]
For this, select an element $\overline{\Phi}$ in the target of $h^\vee$. We  construct an element $\Phi$ in the source of $h^\vee$ such that $h^\vee(\Phi)=\overline{\Phi}$.	

First, we define $\Phi$ as a linear functional on  \mbox{$M^\bullet \otimes_{\mathbb{C}} Z$.}
We use a spanning set for $Z$ obtained as follows.
Since $V$ is $C_1$-cofinite and $V_0\cong \mathbb{C}\bm{1}^V$, lowest weight   $V$-modules admit spanning sets of PBW-type \cite[Cor.~3.12]{KarelLi}. Since $Z$ is the tensor product of two lowest weight   $V$-modules,  we conclude that $Z$ admits a spanning set of PBW-type. Select a spanning set of PBW-type for $Z$, and consider a generating element 
\begin{equation}
\label{eq:z1z2}
z_1\otimes z_2= D^l_{[i_l]} \cdots D^1_{[i_1]} \, \bm{1}^{A(V)} \otimes E^m_{[j_m]} \cdots E^1_{[j_1]} \, \bm{1}^{A(V)} \quad \in Z
\end{equation}
with $D^1,\dots, D^l, E^1, \dots, E^m\in V$, and $i_1,\dots, i_l,j_1,\dots,j_m\in\mathbb{Z}$ such that 
\[
D^l_{[i_l]},\dots, D^2_{[i_2]}, E^m_{[j_m]}, \dots, E^2_{[j_2]}\in \mathfrak{L}(V)_{>0} \quad\mbox{ and }\quad D^1_{[i_1]}, E^1_{[j_1]} \in \mathfrak{L}(V)_{\geq 0}.
\]
Looking more closely at \cite{KarelLi}, one can assume that all $D^1, \dots, D^l, E^1, \dots, E^m$ are in the complement of $C_1(V)$ in $V$.
Note that the elements $D^1_{[i_1]}, E^1_{[j_1]} \in \mathfrak{L}(V)_{0}$ generate the lowest weight space from the vector $\bm{1}^{A(V)} \otimes \bm{1}^{A(V)}$.
We proceed by induction on $l$ and $m$.
We start by defining
\[
\Phi\left(w \otimes \bm{1}^{A(V)}\otimes \bm{1}^{A(V)}\right) := \overline{\Phi}(w).
\]
Next, assume that $\Phi$ has been defined on elements $w\otimes z_1\otimes z_2$ for $w\in M^\bullet$ and $z_1\otimes z_2$ as in \eqref{eq:z1z2} for \textit{fixed} $l$ and $m$.
Consider an element 
\[
w\otimes D^{+}_{[i_{+}]} z_1\otimes z_2 \in M^\bullet \otimes_{\mathbb{C}} Z
\]
for some $w\otimes z_1\otimes z_2 \in M^\bullet \otimes_{\mathbb{C}} Z$, $D^+\in V$, and $i_+\in\mathbb{Z}$.
Choose an integer $N \gg 0$ such that 
\[
D^+_{[i]}\, z_1\otimes z_2=z_1\otimes D^+_{[i]}\, z_2= 0 \quad \mbox{ in $Z$ for all $i\geq N$.}
\]
As in Step 2, such $N$ exists because $\mathscr{U}\!(V)$ acts smoothly on each of the two factors of  $Z$.
By Proposition~\ref{prop:chiralRR}, there exists 
$\sigma\in \mathcal{L}_{C\setminus P_\bullet\sqcup Q_\bullet}(V)$
such that
\begin{equation}\label{eq:condsigma}\begin{aligned} 
\sigma_{Q_+} &\equiv D^+_{[i_+]}  & \quad  \in &\quad &  \mathfrak{L}_{Q_+}(V)/\mathfrak{L}_{Q_+}(V,N Q_+) , \\
\sigma_{Q_-} & \equiv 0  & \quad  \in &\quad &\mathfrak{L}_{Q_-}(V)/\mathfrak{L}_{Q_-}(V,NQ_-).
\end{aligned}
\end{equation}
Define
\[
\Phi\left( w\otimes D^{+}_{[i_{+}]} z_1\otimes z_2 \right):= - \sum_{i=1}^n \Phi \left( w_1\otimes \cdots \otimes \sigma_{P_i}(w_i)\otimes \cdots \otimes w_n \otimes z_1\otimes z_2 \right)
\]
where $w=w_1\otimes \cdots\otimes w_n \in M^\bullet$. 
Note that since the generating elements for $Z$ are expressed in terms of $D^1, \dots, D^l, E^1, \dots, E^m$, which are not in $C_1(V)$, then $\Phi$ automatically respects the relations in $Z$, and hence defines a linear map on $M^\bullet \otimes_{\mathbb{C}} Z$.
Since  $\overline{\Phi}$ vanishes on $\mathcal{L}_{C\setminus P_\bullet}(V,\{Q_+,Q_-\})$, the definition of $\Phi$ here is independent of the choice of $\sigma$ satisfying \eqref{eq:condsigma}. One proceeds in a similar way to define $\Phi$ on elements of type 
\[
w\otimes z_1\otimes E^{+}_{[j_{+}]}  z_2 \in M^\bullet \otimes_{\mathbb{C}} Z
\]
for  $w\otimes z_1\otimes z_2 \in M^\bullet \otimes_{\mathbb{C}} Z$, $E^+\in V$, and $j_+\in\mathbb{Z}$. 
By induction, this defines $\Phi$ on a spanning set of PBW-type for $M^\bullet \otimes_{\mathbb{C}} Z$, hence by linearity, on $M^\bullet \otimes_{\mathbb{C}} Z$.
A direct calculation shows that
\begin{multline*}
\Phi\left( w\otimes D^{a}_{[i_{a}]} D^{b}_{[i_{b}]} \, z_1\otimes z_2 \right) 
- \Phi\left( w\otimes  D^{b}_{[i_{b}]} D^{a}_{[i_{a}]} \, z_1\otimes z_2 \right) \\
= \Phi\left( w\otimes \left[D^{a}_{[i_{a}]}, D^{b}_{[i_{b}]} \right] \, z_1\otimes z_2 \right) 
\end{multline*}
and similarly
\begin{multline*}
\Phi\left( w\otimes  z_1\otimes E^{a}_{[j_{a}]} E^{b}_{[j_{b}]}\, z_2 \right) 
- \Phi\left( w\otimes  z_1\otimes E^{b}_{[j_{b}]} E^{a}_{[j_{a}]}\, z_2 \right)  \\
= \Phi\left( w\otimes  z_1\otimes \left[E^{a}_{[j_{a}]}, E^{b}_{[j_{b}]} \right] \, z_2 \right), 
\end{multline*}
hence $\Phi$ is independent of the choice of the spanning set.

\smallskip

Finally, we check that $\Phi$  vanishes on $\mathcal{L}_{C\setminus P_\bullet\sqcup Q_\bullet}(V) \left(M^\bullet \otimes_{\mathbb{C}} Z\right)$. 
  Fix $\tau$ in $\mathcal{L}_{C\setminus P_\bullet\sqcup Q_\bullet}(V)$. 
By definition of $\Phi$, one has
\[
\Phi\left(w\otimes \tau_{Q_+}(z_1)\otimes z_2 \right) + \Phi\left(w\otimes z_1 \otimes \tau_{Q_-}(z_2)\right) = 
-\Phi\left(\sigma_{P_\bullet}(w)\otimes z_1\otimes z_2 \right)
\] for any $\sigma \in \mathcal{L}_{C\setminus P_\bullet \sqcup Q_\bullet}(V)$ satisfying conditions analogous to \eqref{eq:condsigma}, hence we can choose $\sigma = \tau$. It follows that $\Phi$ vanishes on $\tau \left(M^\bullet \otimes_{\mathbb{C}} Z\right)$ for all $\tau \in \mathcal{L}_{C\setminus P_\bullet\sqcup Q_\bullet}(V)$, hence $\Phi$ is in the source of~$h^\vee$. By construction, one has $h^\vee(\Phi)=\overline{\Phi}$, hence $h^\vee$ is surjective.
This ends the proof.
\end{proof}


\section{Proof of the factorization theorem}\label{section:Fact}
Here we prove our main result,  which we state in complete detail below.  For this let us first set some notation. 
Let $(C,P_{\bullet})$ be a stable $n$-pointed curve with exactly one simple node, denoted~$Q$.  Let $\Ct\to C$ be the normalization of~$C$,  let $\Qp$ and $\Qm\in \Ct$ be the two preimages of $Q$, and set $Q_\bullet=(\Qp,\Qm)$. The curve $\Ct$ may not be connected. 
Fix formal coordinates $t_i$ at $P_i$, for each $i=1,\dots,n$, and $s_\pm$ at $Q_\pm$.
Suppose $M^1,\dots, M^n$ are  $V$-modules, set $M^{\bullet}=\otimes_{i=1}^n M^i$, and let
 $\mathscr{W}$ be the set of representatives of isomorphism classes of simple $V$-modules. Consider the map 
\begin{align}
\label{map}
M^{\bullet} \to  \bigoplus_{W \in \mathscr{W}} M^{\bullet} \otimes_{\mathbb{C}} W\otimes_{\mathbb{C}} W',  \qquad
u \mapsto  \mathop{\oplus}_{W\in \mathscr{W}} u \otimes \bm{1}^{W_0}
\end{align}
where
$
\bm{1}^{W_0} = \textrm{id}_{W_0} \in \textrm{End}(W_0)\cong W_0 \otimes W^\vee_0.
$
Here $W_0$ is the degree zero space of the module $W=\bigoplus_{i\geq 0}W_i$. Recall that, by definition, the vector spaces $W_0$ and $W^\vee_0$ are finite-dimensional. 

\begin{theorem}[Factorization theorem] \label{thm:factorization} 
Let $V$ be a  rational, $C_2$-cofinite vertex operator algebra with one-dimensional weight zero space.
The map \eqref{map} gives rise to a canonical isomorphism of vector spaces
\[
\mathbb{V}(V;M^{\bullet})_{(C,P_{\bullet}, t_{\bullet})} \cong \bigoplus_{W \in \mathscr{W}} \mathbb{V}\left(V;M^{\bullet}\otimes_{\mathbb{C}} W\otimes_{\mathbb{C}} W' \right)_{\left(\Ct,P_{\bullet}\sqcup Q_\bullet, t_\bullet\sqcup s_\bullet\right)}.
\]
This isomorphism is equivariant with respect to change of the coordinates  $t_\bullet$.
\end{theorem}

The proof we give here roughly follows the outline of the proof in \cite[\S 8.6]{nt}, with the generalizations to coinvariants defined using the chiral Lie algebra instead of  Zhu's Lie algebra, and for curves of arbitrary genus, made possible by
 Propositions \ref{prop:chiralnodal} and \ref{prop:coinvZhat}.
 
\begin{proof}
By  definition \eqref{eq:coinvdef}, due to propagation of vacua (Theorem \ref{thm:POV}), we can reduce to the case $C\setminus P_\bullet$ affine, after possibly adding more marked points $P_i$ and corresponding modules $V$. Using the formal coordinates $t_i$ at $P_i$, for  $i=1,\dots,n$, and $s_\pm$ at $Q_\pm$, one has Lie algebra homomorphisms
\[
\Ll_{C\setminus P_\bullet}(V) \rightarrow \mathfrak{L}_{P_\bullet}(V) \quad\mbox{and}\quad
\Ll_{\Ct\setminus P_{\bullet}\sqcup Q_\bullet}(V)\rightarrow \mathfrak{L}_{P_\bullet}(V)\oplus \mathfrak{L}_{Q_\bullet}(V).
\]
In the following, we show that \eqref{map} induces a canonical isomorphism
\begin{equation}\label{Eqclaim}
M^\bullet_{\Ll_{C\setminus P_\bullet}(V)} \cong
\bigoplus_{W \in \mathscr{W}} \left(M^{\bullet}\otimes_{\mathbb{C}} W\otimes_{\mathbb{C}} W' \right)_{\Ll_{\Ct\setminus P_{\bullet}\sqcup Q_\bullet}(V)}.
\end{equation}
We will argue that there is a commutative diagram
\[
\begin{tikzcd}
M^\bullet_{\Ll_{\Ct\setminus P_\bullet}\left(V, \{Q_+,Q_-\}\right)} \arrow[rightarrow]{r}{h}[swap]{\cong} \arrow[twoheadrightarrow]{d} & \left( M^\bullet \otimes_{\mathbb{C}} {Z} \right)_{\Ll_{\Ct\setminus P_\bullet\sqcup Q_\bullet}(V)} \arrow[twoheadrightarrow]{d}\\
M^\bullet_{\Ll_{C\setminus P_\bullet}(V)}  \arrow[rightarrow]{r}{f}[swap]{\cong} & \left( M^\bullet \otimes_{\mathbb{C}} \overline{Z} \right)_{\Ll_{\Ct\setminus P_\bullet\sqcup Q_\bullet}(V)}.
\end{tikzcd}
\]
Then after Lemma \ref{lemma:ZZ},  the isomorphism $f$ gives \eqref{Eqclaim}. 

\vskip0.5pc

\noindent \setword{\textit{Step 1}}{St71}.  The top horizontal isomorphism $h$ is given by Proposition \ref{prop:coinvZhat}. 

\vskip0.5pc

\noindent \setword{\textit{Step 2}}{St72}. We argue that there is an inclusion
 \[
 \iota\colon \Ll_{\Ct\setminus P_\bullet}\left(V, \{Q_+,Q_-\}\right)\hookrightarrow \Ll_{C\setminus P_\bullet}(V).
 \] 
 Indeed, by Proposition \ref{prop:chiralnodal} an element of  $\Ll_{C\setminus P_\bullet}(V)$ can be realized as $\sigma$ in $\Ll_{\Ct\setminus P_\bullet\sqcup Q_\bullet}(V)$ such that $\sigma_{Q_\pm}\in \mathfrak{L}(V)_{\leq 0}$ and the restrictions $\left[ \sigma_{Q_\pm}\right]_0$ of $\sigma_{Q_\pm}$ to $\mathfrak{L}(V)_{0}$ satisfy $\left[ \sigma_{Q_-}\right]_0=\vartheta\left[ \sigma_{Q_+}\right]_0$.
In particular,  $\Ll_{\Ct\setminus P_\bullet}\left(V, \{Q_+,Q_-\}\right)$ from \eqref{eq:modifiedchiral} is identified as a Lie subalgebra of
$\Ll_{C\setminus P_\bullet}(V)$ by pullback along the 
normalization, since its elements satisfy \mbox{$\sigma_{Q_\pm}\in \mathfrak{L}(V)_{<0}$}. 

\vskip0.5pc

\noindent \setword{\textit{Step 3}}{St73}. To show that the bottom horizontal map $f$ is an isomorphism, it remains to verify that the kernel of the left vertical map is identified 
with the kernel of the right vertical map by the isomorphism $h$.

 \vskip0.5pc

\noindent \setword{\textit{Step 3(a)}}{St73a}. The kernel of the left vertical map is the space 
\[ 
K:= \Ll_{C\setminus P_\bullet}(V) \left(M^\bullet_{{\Ll_{\Ct\setminus P_\bullet}(V,\{Q_+,Q_-\})}}\right).
\]
Note that for $\sigma$ in ${\Ll_{C\setminus P_\bullet}(V)}$, the formula for the brack\-et \eqref{bracket} gives 
\[
\left[\varphi(\sigma), \varphi\left(\Ll_{\Ct\setminus P_\bullet}(V,\{Q_+,Q_-\})\right)\right]\subset \varphi\left(\Ll_{\Ct\setminus P_\bullet}(V,\{Q_+,Q_-\})\right),
\]
where $\varphi$ is as in \eqref{phiL}. It follows that $\sigma$ acts on the source of $h$.
The left vertical map is thus the quotient by the action of ${\Ll_{C\setminus P_\bullet}(V)}/{\Ll_{\Ct\setminus P_\bullet}(V,\{Q_+,Q_-\})}$.

\vskip0.5pc

\noindent \setword{\textit{Step 3(b)}}{St73b}. 
We conclude the argument by showing that
the right vertical map is the quotient by $h(K)$.
Recall that $h(w)=w\otimes \bm{1}^{A(V)}\otimes \bm{1}^{A(V)}$, for $w\in M^\bullet$.
Hence, $h(K)$ is linearly spanned by
\begin{multline}
\label{eq:spanninghK}
\sigma_{P_\bullet}(w)\otimes \bm{1}^{A(V)}\otimes \bm{1}^{A(V)} \equiv \\
-w\otimes \sigma_{Q_+}\left(\bm{1}^{A(V)}\right)\otimes \bm{1}^{A(V)} 
-w\otimes \bm{1}^{A(V)}\otimes \sigma_{Q_-}\left(\bm{1}^{A(V)}\right) 
\end{multline} 
modulo $\Ll_{\Ct\setminus P_\bullet\sqcup Q_\bullet}(V)\left(M^\bullet \otimes {Z}\right)$,
for $\sigma\in \Ll_{C\setminus P_\bullet}(V)$ and $w\in M^\bullet$.
Since $\mathscr{U}\!(V)_{<0}$ acts trivially on $A(V)$, \eqref{eq:spanninghK} is congruent to
\[
-w\otimes \left[\sigma_{Q_+}\right]_0\left(\bm{1}^{A(V)}\right)\otimes \bm{1}^{A(V)} -w\otimes \bm{1}^{A(V)}\otimes \left[\sigma_{Q_-}\right]_0\left(\bm{1}^{A(V)}\right) 
\]
modulo $\Ll_{\Ct\setminus P_\bullet\sqcup Q_\bullet}(V)\left(M^\bullet \otimes {Z}\right)$. 
From Proposition \ref{prop:chiralnodal}, one has $\left[\sigma_{Q_-}\right]_0 = \vartheta\left(\left[\sigma_{Q_+}\right]_0\right)$.
By linearity, it follows that $h(K)$ is linearly spanned by
\begin{equation}
\label{eq:spanninghK2}
-w\otimes B_{[k-1]}\left(\bm{1}^{A(V)}\right)\otimes \bm{1}^{A(V)} -w\otimes \bm{1}^{A(V)}\otimes \vartheta\left(B_{[k-1]}\right)\left(\bm{1}^{A(V)}\right) 
\end{equation}
modulo $\Ll_{\Ct\setminus P_\bullet\sqcup Q_\bullet}(V)\left(M^\bullet \otimes {Z}\right)$, for $w\in M^\bullet$ and homogeneous $B\in V$ of degree $k$.
Here, $B_{[k-1]}$ and $\vartheta\left(B_{[k-1]}\right)$ are in $\mathfrak{L}(V)_0$, and recall that the action of $\mathfrak{L}(V)_0$ on $A(V)$ is induced from the projection $\mathfrak{L}(V)_0 \rightarrow A(V)$. 

One has $b\left(\bm{1}^{A(V)}\right)= \left(\bm{1}^{A(V)}\right)b$ for all $b\in \mathfrak{L}(V)_0$, where $\left(\bm{1}^{A(V)}\right)b$ denotes the right action of $b\in \mathfrak{L}(V)_0$ on $\bm{1}^{A(V)}$. Using this in \eqref{eq:spanninghK2}, we conclude that $h(K)$ is linearly spanned by
\begin{equation}
\label{eq:spanninghK3}
-w\otimes \left(\bm{1}^{A(V)}\right)B_{[k-1]}\otimes \bm{1}^{A(V)} -w\otimes \bm{1}^{A(V)}\otimes \left(\bm{1}^{A(V)}\right) \vartheta\left(B_{[k-1]}\right)
\end{equation}
modulo $\Ll_{\Ct\setminus P_\bullet\sqcup Q_\bullet}(V)\left(M^\bullet \otimes {Z}\right)$, for $w\in M^\bullet$ and homogeneous $B\in V$ of degree $k$.

From Lemma \ref{lemma:ZZ}, the target of $h$ is isomorphic to
\begin{eqnarray}
\begin{split}
\label{eq:htargetiso}
\left( M^\bullet \otimes {Z} \right)_{\Ll_{\Ct\setminus P_\bullet\sqcup Q_\bullet}(V)} &\cong&
\bigoplus_{W,Y\in\mathscr{W}} \left( M^\bullet \otimes W \otimes W_0^\vee\otimes Y \otimes Y_0^\vee \right)_{\Ll_{\Ct\setminus P_\bullet\sqcup Q_\bullet}(V)}\\
&\cong& \bigoplus_{W,Y\in\mathscr{W}} \left( M^\bullet \otimes W \otimes Y \right)_{\Ll_{\Ct\setminus P_\bullet\sqcup Q_\bullet}(V)} \otimes W_0^\vee \otimes Y_0^\vee.
\end{split}
\end{eqnarray}
The second isomorphism is due to the fact that $\mathfrak{L}_{Q_+}(V)$ acts on the left of $W \otimes W_0^\vee$, and similarly, $\mathfrak{L}_{Q_-}(V)$ acts on the left of $Y \otimes Y_0^\vee$, so that $\Ll_{\Ct\setminus P_\bullet\sqcup Q_\bullet}(V)$ only acts on $M^\bullet \otimes W \otimes Y$.
Expressing \eqref{eq:spanninghK3} via the isomorphisms in \eqref{eq:htargetiso}, $h(K)$ is linearly spanned by
\begin{align*}
&-\left( M^\bullet \otimes W \otimes Y \right)_{\Ll_{\Ct\setminus P_\bullet\sqcup Q_\bullet}(V)} \otimes B_{[k-1]}\left(W_0^\vee\right) \otimes Y_0^\vee\\
&-\left( M^\bullet \otimes W \otimes Y \right)_{\Ll_{\Ct\setminus P_\bullet\sqcup Q_\bullet}(V)} \otimes W_0^\vee \otimes \vartheta\left( B_{[k-1]}\right) \left(Y_0^\vee\right)
\end{align*}
 for $W,Y\in\mathscr{W}$ and homogeneous $B\in V$ of degree $k$.
Here the right action of $B_{[k-1]}$ and $\vartheta\left( B_{[k-1]}\right)$ in $\mathfrak{L}(V)_0$ on $W_0^\vee$ and $Y_0^\vee$ is given by Lemma \ref{lem:contr}, and recall that $\vartheta$ is an involution.
It follows that $h(K)$ in $\left( M^\bullet \otimes {Z} \right)_{\Ll_{\Ct\setminus P_\bullet\sqcup Q_\bullet}(V)}$ is isomorphic to
\[
\bigoplus_{W,Y\in\mathscr{W}}\left( M^\bullet \otimes W \otimes Y \right)_{\Ll_{\Ct\setminus P_\bullet\sqcup Q_\bullet}(V)}  \otimes 
\mathscr{I}\left(W_0^\vee , Y_0^\vee \right)
\]
with $\mathscr{I}\left(W_0^\vee , Y_0^\vee \right)\subset W_0^\vee \otimes Y_0^\vee$ linearly spanned by 
\[
\psi_W \circ\vartheta\left(B_{[k-1]} \right)\otimes \psi_Y +  \psi_W \otimes \psi_Y\circ B_{[k-1]} , 
\]
where $\psi_W \in W_0^\vee$,  $\psi_Y\in Y_0^\vee$ for  $W,Y\in\mathscr{W}$, and  $B\in V_k$, for $k\geq 0$.
One has 
\[
W_0^\vee \otimes Y_0^\vee / \mathscr{I}\left(W_0^\vee , Y_0^\vee \right) =\mathrm{Hom}_{A(V)}\left( W_0, Y_0^\vee \right),
\]
and by Schur's Lemma, this is isomorphic to $\mathbb{C}$ when $Y=W'$ and zero otherwise. 
This and the description of $\overline{Z}$ from Lemma \ref{lemma:ZZ} imply that, after taking the quotient of $\left( M^\bullet \otimes_{\mathbb{C}} {Z} \right)_{\Ll_{\Ct\setminus P_\bullet\sqcup Q_\bullet}(V)}$ by $h(K)$, one obtains $ \left( M^\bullet \otimes_{\mathbb{C}} \overline{Z} \right)_{\Ll_{\Ct\setminus P_\bullet\sqcup Q_\bullet}(V)}$,
hence the statement.
\end{proof}


\section{Sewing and local freeness}\label{section:Sewing}

In this section we prove \hyperlink{thm:VBS}{VB corollary}.
For this, we start with the sewing theorem (Theorem \ref{thm:SewingAndFactorization}), a refined version of the \hyperlink{thm:fact}{factorization theorem}. 
This requires the notion of formal smoothings, reviewed below.

\subsection{Formal smoothings}\label{smoothings} 
For a $\CC$-algebra $R$ with smooth $\textrm{Spec}(R)$, let $\mathscr{C}_0\rightarrow {S_0}:=\textrm{Spec}(R)$ be a flat family of nodal curves with a single simple node defined by a section $Q$ and with $n$ distinct smooth points  given by sections $P_\bullet=(P_1,\ldots,P_n)$. Assume that $\mathscr{C}_0\setminus P_\bullet(S_0)$ is affine over $S_0$. Up to an \'etale base change of $S_0$ of degree two, we can normalize $\mathscr{C}_0$ and obtain a smooth family of $(n+2)$-pointed curves $\widetilde{\mathscr{C}}_0\to S_0$ with sections $P_\bullet \sqcup (Q_{+}, Q_{-})$, where $Q_{\pm}(S_0) \subset \widetilde{\mathscr{C}}_0$ are the preimages of the node in $\mathscr{C}_0/S_0$. Fix formal coordinates $s_+$ and $s_-$ at $Q_+(S_0)$ and $Q_-(S_0)$, respectively. Such coordinates determine a \textit{smoothing} of $(\mathscr{C}_0, P_\bullet)$ over  $S:=\textrm{Spec}(R\llbracket q\rrbracket)$.  That is, a flat family $\mathscr{C}\rightarrow S=\textrm{Spec}(R\llbracket q \rrbracket)$ with sections $P_{\bullet}=(P_1,\ldots, P_n)$ such that  the  general fiber is smooth and the special fiber is identified with  $\mathscr{C}_0\rightarrow {S_0}$.
The family $\widetilde{\mathscr{C}}_0\to S_0$ trivially extends to a family of smooth curves $\widetilde{\mathscr{C}}\rightarrow S$ with $n+2$ sections $P_\bullet$,  $Q_+$, and $Q_-$:
\begin{center}
 \begin{tikzcd}
\widetilde{\mathscr{C}}
 \arrow[rightarrow]{dr} && \mathscr{C} \arrow[rightarrow]{ld}\\
& S=\textrm{Spec}(R\llbracket q \rrbracket) \arrow[rightarrow, bend left=-30, swap]{ur}{P_\bullet} \arrow[rightarrow, bend left=30]{ul}{P_\bullet, Q_+, Q_-} & .
 \end{tikzcd}
\end{center}
The formal coordinate at $Q_{\pm}(S_0)$  extends to a formal coordinate, still denoted
 $s_\pm$,  at $Q_\pm(S)$  --- that is, $s_\pm$ is a generator of the ideal of the completed local $R\llbracket q\rrbracket$-algebra of $\widetilde{\mathscr{C}}$ at $Q_\pm(R)$  --- such that locally around the node, the family $\mathscr{C}$ is defined by $s_+  s_-=q$.
For more details, see \cite[p.~457]{loowzw} and  \cite[pp.~184-5]{ACG2}. 
 We emphasize that the existence of such families holds  over the formal base $S=\textrm{Spec}(R\llbracket q \rrbracket)$, or equivalently, over the complex open unit disk around $S_0$ in the analytic category, but fails over a more general base.
Moreover, one still has that $\widetilde{\mathscr{C}}\setminus P_\bullet(S)$ and ${\mathscr{C}}\setminus P_\bullet(S)$ are affine over $S$.

\subsection{The sheaf of vertex algebras over formal smoothings} 
\label{sec:Vforfamilies}
We define here the sheaf of vertex algebras $\Vv_\Cc$ over a family $\mathscr{C}\rightarrow S$ as in \S\ref{smoothings}. After \S\ref{VSheaf}, it is enough to describe $\Vv_\Cc$ on the formal neighborhood $\mathfrak{D}_Q$ of the node $Q$. The completed local ring $\widehat{\mathscr{O}}_Q$ consists of elements of the form $\sum_{i,j\geq 0} \alpha_{i,j}\,s_+^i s_-^j$ for $\alpha_{i,j}\in R$. After identifying $s_+=(s_+, \frac{q}{s_-})$ and $s_-=(\frac{q}{s_+}, s_-)$,
the ring $\widehat{\mathscr{O}}_Q$ is realized as the subring of 
$R(\!(s_+)\!)\llbracket q \rrbracket \oplus R(\!(s_-)\!) \llbracket q \rrbracket$
consisting of elements of  type
\[
\sum_{i,j\geq 0} \alpha_{i,j}\left(s_+^{i-j}q^j, s_-^{j-i}q^i\right)
\] 
with $\alpha_{i,j} \in R$.  
The space of sections of $\Vv_\Cc$ over $\mathfrak{D}_Q$ is generated  by 
\begin{equation} 
\label{eq:typeA}
\sum_{i,j \geq 0}  \alpha_{i, j} \Big(A \otimes s_+^{a+i-j}q^j, (-1)^a \sum_{k \geq 0} \dfrac{1}{k!}L_1^k(A) \otimes s_-^{a-k+j-i} q^i\Big)
\end{equation}
for homogeneous $A \in V$ of degree $a$ and $\alpha_{i,j} \in R$. For $q=0$, one recovers the description given in \S \ref{VSheaf} for nodal curves.

 Similar to the case of a nodal curve, we can identify a logarithmic connection $\nabla \colon \Vv_\Cc \to \Vv_\Cc \otimes \omega_{\Cc/S}$. The description of $\nabla$ on a smooth open set  follows from the description of the connection $\nabla$ over a smooth curve. Specifically, for every smooth point $P \in \Cc \setminus Q$, there is an open set $S' \subset S$ and an open set $U \subset \Cc \setminus Q$ over $S'$ and containing $P$ such that there exists an \'etale map $U \to \mathbb{A}_{S'}^1$. This implies that $U$ has a global coordinate $t$, and thus  $\Vv|_{U} \simeq_t V \otimes \mathscr{O}_{U}$. On the open set $U$, the connection $\nabla$ is provided by the endomorphism of $\Vv|_{U}$ given by $L_{-1} \otimes \text{id}_U + \text{id}_V \otimes \partial_t$, as in \S\ref{connonSmooth}.

We are left to describe  $\nabla$ over $\mathfrak{D}_Q$. Recall that $\left(\frac{ds_+}{s_+}, - \frac{ds_-}{s_-} \right)$ is a generator of $\omega_{\Cc/S}$ over $\widehat{\mathscr{O}}_Q$. Using this trivialization, the connection $\nabla$ is defined by the endomorphism \eqref{eq:nablaendonod} extended over $R\llbracket q \rrbracket$ by linearity, that is: 
\begin{equation*}\left(L_{-1} \otimes s_+ \text{id}_{R(\!(s_+)\!)\llbracket q \rrbracket} + \text{id}_V \otimes s_+ \partial_{s_+}, -L_{-1} \otimes s_-\text{id}_{R(\!(s_-)\!)\llbracket q \rrbracket}  - \text{id}_V \otimes s_- \partial_{s_-} \right).
\end{equation*} 
As in the proof of Proposition \ref{prop:LC}, one needs to show that the target of this map lies inside $\Vv_{\Cc}(\mathfrak{D}_Q)$, and not merely in $V \otimes \big(R(\!(s_+)\!)\llbracket q \rrbracket \oplus R(\!(s_-)\!) \llbracket q \rrbracket\big)$. This amounts to a verification as in the proof of Proposition \ref{prop:LC}.

\subsection{The chiral Lie algebra over formal smoothings} \label{sec:chiralnodalfam}
Next, we globalize the Lie algebra $\mathfrak{L}(V)$ ancillary to $V$ and the chiral Lie algebra. 

Select a formal coordinate $t_i$ at $P_i(S)$. On the formal neighborhood  $\mathfrak{D}_{P_i} = \Spec \, R\llbracket t_i, q \rrbracket$, one has $\Vv_\Cc|_{\mathfrak{D}_{P_i}} \simeq_{t_i} \left( V \otimes R \llbracket t_i \rrbracket \right)\llbracket q \rrbracket$. Since the restriction of $\nabla$ over $\mathfrak{D}_{P_i}$ is given by $L_{-1} \otimes \text{id}_{R\llbracket t_i, q \rrbracket} + \text{id}_V \otimes \partial_{t_i}$, we have an identification as in~\eqref{eq:simeqt}:
\begin{equation}
\label{eq:idsheafanc}
\text{H}^0\left(\mathfrak{D}^\times_{P_i}, \Vv_\Cc\otimes \omega_{\Cc/S}/\textrm{Im}\nabla \right) 
\xrightarrow{\simeq_{t_i}}  \mathfrak{L}_{t_i}(V)\llbracket q \rrbracket. 
\end{equation}
We define the \textit{chiral Lie algebra} assigned to $V$ and  $(\Cc, P_\bullet)$ as 
\[ \Ll_{\Cc \setminus P_\bullet}(V):= \text{H}^0\left(\Cc \setminus P_\bullet(S), \dfrac{\Vv_\Cc \otimes \omega_{\Cc/S}}{\text{Im} \nabla} \right).
\] 
This extends \S \ref{ChiralDef}. The map
\begin{equation*}
\Ll_{\Cc\setminus P_\bullet}(V) \rightarrow  \oplus_{i=1}^n \text{H}^0\left(\mathfrak{D}^\times_{P_i}, \Vv_\Cc\otimes \omega_{\Cc/S}/\textrm{Im}\nabla \right) 
\xrightarrow{\cong}  \oplus_{i=1}^n \mathfrak{L}_{t_i}(V) \llbracket q \rrbracket
\end{equation*} induced by the restriction of sections and  \eqref{eq:idsheafanc} is a Lie algebra morphism.

\begin{rmk} \label{rmk:chiralnodalfam} 
As in Proposition \ref{prop:chiralnodal}, $\Ll_{\Cc \setminus P_\bullet}(V)$ can be described as the subspace of $\Ll_{\widetilde{\Cc} \setminus P_\bullet \sqcup Q_\bullet}(V)$ generated by those elements $\sigma$ whose restrictions $\sigma_{Q_\pm}$ to $ \mathfrak{L}_{Q_\pm}(V)\llbracket q \rrbracket$ satisfy
\begin{eqnarray*} 
\sigma_{Q_+} &=& \sum_{i,j\geq 0} \alpha_{i,j} \, A_{[a+i-j-1]} \, q^j, \\
\sigma_{Q_-} &=& (-1)^{a-1}\sum_{i,j\geq 0}  \alpha_{i,j} \sum_{k \geq 0} \dfrac{1}{k!}\left(L_1^kA \right)_{[a-k+j-i-1]} q^i \\
&=& \sum_{i,j\geq 0}  \alpha_{i,j} \, \vartheta\left(A_{[a+i-j-1]}\right) q^i
\end{eqnarray*} 
for homogeneous $A \in V$ of degree $a$ and integers $i, j \in \ZZ_{\geq 0}$. This uses that sections of $\Vv_\Cc$ over $\mathfrak{D}_Q$ are generated by \eqref{eq:typeA},  sections of $\omega_{\Cc/S}$ over $\mathfrak{D}_Q$ are generated by $\left(\frac{ds_+}{s_+}, - \frac{ds_-}{s_-} \right)$ over $\widehat{\mathscr{O}}_Q$, and the definition of $\vartheta$ from \eqref{eq:iota}.
\end{rmk}

\subsection{Sheaf of coinvariants over formal smoothings}
\label{sec:sheafcoinvariantsformalsmoothings}

Let $\left(\mathscr{C}/S, P_\bullet \right)$ be as in \S\ref{smoothings}. Fix formal coordinates $t_i$ at $P_i(S)$, for $i=1,\dots,n$. Given $V$-modules $M^1, \dots, M^n$, let $M^\bullet:=\otimes_{i=1}^n M^i$.
One defines the sheaf of coinvariants $\mathbb{V}(V; M^{\bullet})_{\left(\mathscr{C}/S, P_\bullet, t_\bullet\right)}$ as in \S\ref{Coinvariants}, that is:
\[
\mathbb{V}(V; M^{\bullet})_{\left(\mathscr{C}/S, P_\bullet, t_\bullet\right)}:=
\left(M^{\bullet} \otimes \mathscr{O}_S\right)_{\Ll_{\mathscr{C}\setminus P_\bullet}(V)}, 
\]
The \hyperlink{thm:Fact}{factorization theorem} holds for the restriction of $\mathbb{V}(V; M^{\bullet})_{\left(\mathscr{C}/S, P_\bullet, t_\bullet\right)}$ to the special fiber \mbox{$\mathscr{C}_0\rightarrow {S_0}$}:

\begin{theorem}
\label{thm:Factfamily}
Let $V$ be a  rational, $C_2$-cofinite vertex operator algebra with one-dimensional weight zero space. The map \eqref{map} induces a canonical $\mathscr{O}_{S_0}$-module isomorphism
\[
\mathbb{V}(V; M^{\bullet})_{\left(\mathscr{C}_0/S_0, P_\bullet, t_\bullet\right)} \cong 
\underset{W\in \mathscr{W}}\bigoplus{\mathbb{V}}\left(V; M^{\bullet}\otimes W \otimes W'\right)_{\left(\widetilde{\mathscr{C}}_0/S_0, P_\bullet\sqcup Q_\bullet, t_\bullet \sqcup s_\bullet\right)}.
\]
\end{theorem}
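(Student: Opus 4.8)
The plan is to deduce Theorem~\ref{thm:Factfamily} from the absolute Factorization Theorem (the statement proved in \S\ref{section:Fact}) by a relative version of the same argument, now carried out over the base $S_0=\mathrm{Spec}(R)$. First I would reduce, via propagation of vacua exactly as in \S\ref{Coinvariants}, to the case where $\mathscr{C}_0\setminus P_\bullet(S_0)$ is affine over $S_0$, after possibly adding sections $P_i$ carrying the module $V$. Next I would fix formal coordinates $t_i$ at the $P_i$ and $s_\pm$ at $Q_\pm$, so that one has sheaf-of-Lie-algebra homomorphisms $\mathscr{L}_{\mathscr{C}_0\setminus P_\bullet}(V)\to \mathfrak{L}_{P_\bullet}(V)\otimes\mathscr{O}_{S_0}$ and $\mathscr{L}_{\widetilde{\mathscr{C}}_0\setminus P_\bullet\sqcup Q_\bullet}(V)\to (\mathfrak{L}_{P_\bullet}(V)\oplus\mathfrak{L}_{Q_\bullet}(V))\otimes\mathscr{O}_{S_0}$, and work with the $\mathscr{O}_{S_0}$-module of coinvariants before descending along the $\mathrm{Aut}\,\mathcal{O}$-torsor.

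The core is to reproduce the commutative square from the proof of the Factorization Theorem in the relative setting:
\[
\begin{tikzcd}
(M^\bullet\otimes\mathscr{O}_{S_0})_{\mathscr{L}_{\widetilde{\mathscr{C}}_0\setminus P_\bullet}(V,\{Q_+,Q_-\})} \arrow[r,"h"] \arrow[twoheadrightarrow]{d} & (M^\bullet\otimes Z\otimes\mathscr{O}_{S_0})_{\mathscr{L}_{\widetilde{\mathscr{C}}_0\setminus P_\bullet\sqcup Q_\bullet}(V)} \arrow[twoheadrightarrow]{d}\\
(M^\bullet\otimes\mathscr{O}_{S_0})_{\mathscr{L}_{\mathscr{C}_0\setminus P_\bullet}(V)} \arrow[r,"f"] & (M^\bullet\otimes\overline{Z}\otimes\mathscr{O}_{S_0})_{\mathscr{L}_{\widetilde{\mathscr{C}}_0\setminus P_\bullet\sqcup Q_\bullet}(V)}.
\end{tikzcd}
\]
For this I would check that Propositions~\ref{prop:chiralnodal}, \ref{prop:chiralRR}, and \ref{prop:coinvZhat}, together with Lemma~\ref{lemma:ZZ} and Lemma~\ref{lem:contr}, all extend over the smooth base $S_0$. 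The descriptions of the chiral Lie algebra on the nodal family in terms of $\mathscr{L}_{\widetilde{\mathscr{C}}_0\setminus P_\bullet\sqcup Q_\bullet}(V)$ and the gluing/residue conditions are $\mathscr{O}_{S_0}$-linear and fiberwise, so Proposition~\ref{prop:chiralnodal} goes through verbatim once $\omega_C$ is replaced by the relative dualizing sheaf $\omega_{\mathscr{C}_0/S_0}$; the Riemann--Roch statement (Proposition~\ref{prop:chiralRR}) holds over $S_0$ because $R$ is a domain with $\mathrm{Spec}(R)$ smooth and the relevant cohomology of $\omega^{\otimes\bullet}$ twisted by the sections is computed by flat, base-change--compatible Euler characteristics; and the correlation-function argument proving Proposition~\ref{prop:coinvZhat} is already written in a coordinate-free, sheaf-theoretic language that globalizes over $S_0$. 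Since $V$ is rational, $A(V)$ is semisimple and Lemma~\ref{lemma:ZZ} applies unchanged, decomposing $\overline{Z}\cong\bigoplus_{W\in\mathscr{W}}W\otimes W'$. Running the Schur's Lemma computation of Step~3(b) over $S_0$ then identifies $f$ as an $\mathscr{O}_{S_0}$-module isomorphism, and passing to the coordinate-independent sheaves $\mathbb{V}(V;M^\bullet)_{(\mathscr{C}_0/S_0,P_\bullet)}$ and $\mathbb{V}(V;M^\bullet\otimes W\otimes W')_{(\widetilde{\mathscr{C}}_0/S_0,P_\bullet\sqcup Q_\bullet)}$ via descent along the torsor yields the stated isomorphism; canonicity and $\mathscr{O}_{S_0}$-linearity follow because the map~\eqref{map} is defined in terms of $\bm{1}^{W_0}\in W_0\otimes W_0^\vee$, independent of the base.

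The main obstacle I expect is verifying that the two propagation-of-vacua--type inputs --- the étale base change of degree two used to separate the branches of the node (so that $\widetilde{\mathscr{C}}_0\to S_0$ exists as a genuine family of smooth pointed curves) and the iterated propagation argument of Remark~\ref{rmk:propvacuapoles} underlying Proposition~\ref{prop:coinvZhat} --- are compatible with flat base change and do not introduce coherence or flatness pathologies over $S_0$. Concretely, one must know that the sheaves of conformal blocks $\mathbb{V}(M^\bullet\otimes V^{l+m})^\vee$ appearing in Step~3 of the proof of Proposition~\ref{prop:coinvZhat} behave well in families on $\widetilde{\mathscr{C}}_0^{\,l+m}/S_0$, and that the strong residue theorem and the extension-of-section argument there can be performed $\mathscr{O}_{S_0}$-linearly; this is precisely the kind of relative bookkeeping already handled in \cite{dgt} and in \cite[\S 17]{bzf}, so I would cite those constructions rather than redo them, and note that coherence of the resulting sheaves is exactly Theorem~\ref{thm:CoherenceOnS}. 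Everything else is a routine relativization of the absolute proof.
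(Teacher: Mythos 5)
Your proposal is correct and matches the paper's approach: the paper offers no separate argument for Theorem \ref{thm:Factfamily}, treating it as the direct relativization over $S_0$ of the commutative-square proof of the \hyperlink{thm:Fact}{Factorization Theorem} (with $\omega_{\mathscr{C}_0/S_0}$ in place of $\omega_C$ and the normalization obtained after the degree-two \'etale base change of \S\ref{smoothings}), which is exactly the relativization you carry out. Your explicit bookkeeping of which inputs (Propositions \ref{prop:chiralnodal}, \ref{prop:chiralRR}, \ref{prop:coinvZhat}, Lemmas \ref{lemma:ZZ}, \ref{lem:contr}) must extend $\mathscr{O}_{S_0}$-linearly is more detailed than what the paper records, but it is the same route.
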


\noindent As a consequence of Corollary \ref{cor:CoherenceOnMgn} and Theorem \ref{thm:Factfamily} we obtain the following result on coherence.

\begin{theorem}
\label{thm:CoherenceOnS} 
Let $V$ be a  rational, $C_2$-cofinite vertex operator algebra with one-dimensional weight zero space. For finitely generated $V$-modules $M^1, \dots, M^n$, the sheaf  $\mathbb{V}(V; M^{\bullet})_{\left(\mathscr{C}/S, P_\bullet, t_\bullet\right)}$ is a coherent $\mathscr{O}_{S}$-module.
\end{theorem}

\begin{proof} Corollary \ref{cor:CoherenceOnMgn} implies that the restriction of $\mathbb{V}(V; M^{\bullet})_{\left(\mathscr{C}/S, P_\bullet, t_\bullet\right)}$ to $S\setminus S_0$ is coherent. We are left to show that the restriction of $\mathbb{V}(V; M^{\bullet})_{\left(\mathscr{C}/S, P_\bullet, t_\bullet\right)}$ to $S_0$, i.e. $\mathbb{V}(V; M^{\bullet})_{\left(\mathscr{C}_0/S_0, P_\bullet, t_\bullet\right)}$  is coherent. This follows from Theorem \ref{thm:Factfamily} and Corollary \ref{cor:CoherenceOnMgn} applied to ${\mathbb{V}}\left(V; M^{\bullet}\otimes W \otimes W'\right)_{\left(\widetilde{\mathscr{C}}_0/S_0, P_\bullet\sqcup Q_\bullet, t_\bullet \sqcup s_\bullet\right)}$.\end{proof}

\subsection{Sewing}
Given a simple $V$-module $W=\bigoplus_{i\geq 0}W_i$, define
\[
\bm{1}^W :=\sum_{i\geq 0}\bm{1}^{W_i} q^i \quad  \in \quad  (W\otimes W')\llbracket q\rrbracket,
\]
where
$
\bm{1}^{W_i}:=\mathrm{id}_{W_i}\in\mathrm{End}(W_i)\cong W_i\otimes W_i^\vee.
$
Consider the map
\begin{equation}
\label{eq:sew}
M^{\bullet} \longrightarrow   \oplus_{W\in\mathscr{W}} M^{\bullet} \otimes (W \otimes W')\llbracket q\rrbracket,\qquad
u  \mapsto \oplus_{W\in\mathscr{W}} \, u \otimes  \bm{1}^{W}.
\end{equation}
The following result extends  \cite[Thm 8.4.6]{nt} to curves of arbitrary genus.     
 
 \begin{theorem}[Sewing Theorem]
 \label{thm:SewingAndFactorization}
Let $V$ be a  rational, $C_2$-cofinite vertex operator algebra with one-dimensional weight zero space, and set $M^{\bullet}=\bigotimes_{i=1}^n M^i$ for   $V$-modules $M^i$.   
The map \eqref{eq:sew} induces a canonical $\mathscr{O}_{S_0} \llbracket q \rrbracket$-module isomorphism $\Psi$
such that the following diagram commutes
\begin{center}
 \begin{tikzcd}[column sep=0.4em]
{\mathbb{V}}(V; M^{\bullet})_{\left({\mathscr{C}}/ S, P_\bullet, t_\bullet\right)} \arrow[r,  "\Psi"] \arrow[d , two heads ]
    &\!\!\!\!\underset{W\in \mathscr{W}}{\bigoplus} \!\!\! {\mathbb{V}}\left(V; M^{\bullet}\otimes W \otimes W'\right)_{\left(\widetilde{\mathscr{C}}_0/S_0, P_\bullet\sqcup Q_\bullet, t_\bullet\sqcup s_\bullet \right)} \! \underset{\mathscr{O}_{S_0}}\otimes \! \mathscr{O}_{S_0}\llbracket q\rrbracket \arrow[d, two heads]\\
\mathbb{V}(V; M^{\bullet})_{\left(\mathscr{C}_0/S_0, P_\bullet, t_\bullet\right)} \arrow[r, "\cong" ]
&\!\!\!\underset{W\in \mathscr{W}}\bigoplus \!\! {\mathbb{V}}\left(V; M^{\bullet}\otimes W \otimes W'\right)_{\left(\widetilde{\mathscr{C}}_0/S_0, P_\bullet\sqcup Q_\bullet, t_\bullet\sqcup s_\bullet\right)}.
\end{tikzcd}
\end{center}
This isomorphism is equivariant with respect to change of the coordinates  $t_\bullet$.
\end{theorem}

\begin{rmk} 
Theorems \ref{thm:Factfamily} and  \ref{thm:SewingAndFactorization} give a canonical isomorphism
\[ 
{\mathbb{V}}(V; M^{\bullet})_{\left({\mathscr{C}}/ S, P_\bullet, t_\bullet\right)} \cong \mathbb{V} \left(V; M^{\bullet}\right)_{\left({\mathscr{C}}_0/S_0, P_\bullet, t_\bullet\right)} \llbracket q \rrbracket.
\]
In particular, this means that to the non-trivial deformation $\mathscr{C}$ of $\mathscr{C}_0$, there corresponds a trivial deformation of the space of conformal blocks.
\end{rmk}

\begin{proof}[Proof of Theorem \ref{thm:SewingAndFactorization}]
As in the proof of the factorization theorem, we can reduce to the case $\mathscr{C}\setminus P_\bullet$ affine over $S$, 
and show that \eqref{eq:sew} induces a canonical $R\llbracket q \rrbracket$-module isomorphism, still denoted $\Psi$, such that the following diagram commutes
\begin{center}
 \begin{tikzcd}[column sep=0.5em]
\left(M^{\bullet}\otimes \mathscr{O}_S\right)_{\Ll_{\mathscr{C}\setminus P_\bullet}(V)} \arrow[r,  "\Psi"] \arrow[d , two heads ]
    &\underset{W\in \mathscr{W}}{\bigoplus} \left( M^{\bullet}\otimes W \otimes W' \otimes \mathscr{O}_{S_0} \right)_{\Ll_{\widetilde{\mathscr{C}}_0\setminus P_\bullet\sqcup Q_\bullet} (V)} \!\!\underset{\mathscr{O}_{S_0}}\otimes\!\! \mathscr{O}_{S_0}\llbracket q\rrbracket \arrow[d, two heads]\\
\left( M^{\bullet}\otimes \mathscr{O}_{S_0} \right)_{\Ll_{\mathscr{C}_0\setminus P_\bullet}(V)} \arrow[r, "\cong" ]
&\underset{W\in \mathscr{W}}\bigoplus  \left( M^{\bullet}\otimes W \otimes W' \otimes \mathscr{O}_{S_0}\right)_{\Ll_{\widetilde{\mathscr{C}}_0\setminus P_\bullet\sqcup Q_\bullet} (V)}.
\end{tikzcd}
\end{center}
Here, the vertical maps are obtained by specializing at $q= 0$.

\noindent \setword{\textit{Step 1}}{StSew1}.
We show that \eqref{eq:sew} induces a well-defined map $\Psi$ between spaces of coinvariants. 
For this, it is enough to show that for each $\sigma\in  \Ll_{\mathscr{C}\setminus P_\bullet}(V)$
and $W\in \mathscr{W}$, one has
$\mbox{$\sigma_{P_\bullet}\left( M^\bullet\right) \otimes \bm{1}^W$} = 
\sigma \left(M^\bullet\otimes \bm{1}^W \right)$,
or equivalently
\begin{equation}
\label{eq:vanishing}
\left( \sigma_{Q_+}\otimes 1 +1\otimes  \sigma_{Q_-}\right)\left(\bm{1}^W\right)=0.
\end{equation}
From Remark \ref{rmk:chiralnodalfam} and by linearity, we can reduce to the case when 
\begin{align*}
\sigma_{Q_+} &=  A_{[a+i-j-1]}\, q^j &\mbox{and}&& \sigma_{Q_-} &= \vartheta\left(A_{[a+i-j-1]}\right) q^i
\end{align*}
for homogeneous $A \in V$ of degree $a$ and integers $i, j \geq 0$.  The vanishing of \eqref{eq:vanishing} follows from the identity
\begin{equation}
\label{eq:identityA11A}
\left( A_{[a+i-j-1]} \otimes 1 + 1\otimes \vartheta\left(A_{[a+i-j-1]} \right)q^{i-j} \right)\left( \bm{1}^W \right)=0
\end{equation}
established in \cite[Lemma 8.7.1]{nt} (there is a sign  difference between the involution $\vartheta$ used here and the involution used in \cite{nt}).
Thus we conclude that the map $\Psi$ is well-defined and makes the diagram above commute.

\noindent \setword{\textit{Step 2}}{StSew2}. Since (i) the target of $\Psi$ is a free $\mathscr{O}_{S_0}\llbracket q \rrbracket$-module of finite rank, (ii) the source is finitely generated (Theorem \ref{thm:CoherenceOnS}), and (iii) $\Phi$ is an isomorphism modulo $q$ (Theorem \ref{thm:Factfamily}), Nakayama's lemma implies that $\Psi$ is an isomorphism
(this is as in \cite{tuy,loowzw,nt}).
\end{proof}

\subsection{The sheaf of coinvariants on $\widetriangle{\mathcal{M}}_{g,n}$}
Let $\widehat{\mathcal{M}}_{g,n}$ be the restriction of $\widetriangle{\mathcal{M}}_{g,n}$ over the locus ${\mathcal{M}}_{g,n}$ of smooth pointed curves (see \S\ref{blowupmu} for definitions). 
The vertex algebra bundle and the chiral Lie algebra defined  on smooth curves in \S\S\ref{sec:VAstable} and \ref{Chiral}, respectively, give the vertex algebra bundle and the sheaf of chiral Lie algebras on $\widehat{\mathcal{M}}_{g,n}$. In \S\S\ref{sec:Vforfamilies} and \ref{sec:chiralnodalfam}, the vertex algebra bundle and the sheaf of chiral Lie algebras are defined on formal smoothings of families of nodal curves. Gluing as in \cite{BLDescente}, one obtains the vertex algebra bundle and the sheaf of chiral Lie algebras on  $\widetriangle{\mathcal{M}}_{g,n}$.  Similarly, the sheaf of coinvariants  defined  on families of smooth curves in \S\ref{Coinvariants} gives the sheaf of coinvariants $\mathbb{V}(V;M^\bullet)$ on $\widehat{\mathcal{M}}_{g,n}$. In \S\ref{sec:sheafcoinvariantsformalsmoothings}, the sheaf of coinvariants is defined on formal smoothings of families of nodal curves. By gluing as in \cite{BLDescente}, one obtains the \textit{sheaf of coinvariants} $\mathbb{V}(V;M^\bullet)$ on  $\widetriangle{\mathcal{M}}_{g,n}$.

\subsection{The sheaf of coinvariants on $\overline{\mathcal{M}}_{g,n}$}
\label{sec:sheafofcoinariantsfinal} 
Throughout this section, we require every $V$-module $M$ to further satisfy the following property: there exists a complex number $c_M$ called the \textit{conformal dimension} (or conformal weight) of $M$ such that for every homogeneous $v \in M$ one has $L_0(v)=(\deg(v) + c_M)v$. This condition holds whenever $M=V$ or, for instance, when $M$ is a simple $V$-module. Furthermore, if  $V$ is $C_2$-cofinite, then $c_M$ is a rational number \cite{MiyamotoC2}.

The sheaf of coinvariants on $\overline{\mathcal{M}}_{g,n}$ is obtained from a two-step process \cite[\S 6.3]{dgt}, reviewed next. Consider the group scheme $\textrm{Aut}_+\mathcal{O}$ which represents the functor assigning to a $\mathbb{C}$-algebra $R$ the group:
\[
\textrm{Aut}_+\mathcal{O}(R) = \left\{z \mapsto \rho(z)= z + a_2 z^2 + \cdots \, | \, a_i \in R \right\}.
\]
This is a subgroup scheme of the group scheme $\textrm{Aut}\,\mathcal{O}$.  In particular, one has $\textrm{Aut}\,\mathcal{O} =\mathbb{G}_m \ltimes \textrm{Aut}_+\mathcal{O}$.
The $(\textrm{Aut}\,\mathcal{O})^{\oplus n}$-torsor $\widetriangle{\mathcal{M}}_{g,n} \rightarrow \overline{\mathcal{M}}_{g,n}$ factors as the composition of an $(\textrm{Aut}_+\mathcal{O})^{\oplus n}$-torsor and a $\mathbb{G}_m^{\oplus n}$-torsor:
\begin{equation}
\label{eq:torsors}
\begin{tikzcd}
&\widetriangle{\mathcal{M}}_{g,n} \arrow[rightarrow]{dl}[swap]{(\textrm{Aut}_+\mathcal{O})^{\oplus n}} \arrow[rightarrow]{dd}{(\textrm{Aut}\,\mathcal{O})^{\oplus n}} \\
\overline{\mathcal{J}}_{g,n}^{\times} \arrow[rightarrow]{dr}[swap]{\mathbb{G}_m^{\oplus n}}&\\
& \overline{\mathcal{M}}_{g,n}.
\end{tikzcd}
\end{equation}
Here $\overline{\mathcal{J}}_{g,n}^{\times}$ is the space of tuples $(C,P_\bullet, \tau_\bullet)$, where  $\tau_\bullet=(\tau_1,\dots,\tau_n)$ with $\tau_i$ a non-zero $1$-jet of a formal coordinate at $P_i$ for each $i$.  As in \cite[\S 6.3.1]{dgt}, $(\textrm{Aut}_+\mathcal{O})^{\oplus n}$ acts on $\mathbb{V}(V;M^\bullet)$, and descending along the $(\textrm{Aut}_+\mathcal{O})^{\oplus n}$-torsor in \eqref{eq:torsors}, one obtains a sheaf of coinvariants $\mathbb{V}^J(V;M^\bullet)$ on $\overline{\mathcal{J}}_{g,n}^{\times}$.   

The idea for the descent of $\mathbb{V}^J(V;M^\bullet)$ to $\overline{\mathcal{M}}_{g,n}$ is inspired by Tsuchimoto  \cite{ts} (and  used to prove \cite[Theorem 8.1]{dgt}): first, one  tensors $\mathbb{V}^J(V;M^\bullet)$ with an appropriate line bundle to obtain a new sheaf on which $\mathbb{G}_m^{\oplus n}$ acts;  after descending the new sheaf, one then tensors back with the dual of the  line bundle.  Next, we detail this argument using root stacks.

The case $n >1$ can be treated by iterating the procedure used for $n=1$, hence we discuss only this latter case and set $M^1=:M$. We will restrict to the case in which the conformal dimension $c_M$ of $M$ is a rational number, and we write $c_M=\frac{a}{d}$ for  $a\in \mathbb{Z}$ and $d \in \mathbb{N}$.  We consider line bundles
$\mathcal{L}_{\mathcal{M}}=\left(\Psi^\vee\right)^{\otimes a}$  on  $\mathcal{M}:=\overline{\mathcal{M}}_{g,1}$,  and $\mathcal{L}_{\mathcal{J}}=\pi^*\mathcal{L}_{\mathcal{M}}$ on $\mathcal{J}:= \overline{\mathcal{J}}_{g,1}^{\times}$, where $\pi \colon \mathcal{J} \to \mathcal{M}$ is the map forgetting the $1$-jets and $\Psi$ is the cotangent line bundle corresponding to the marked point.

\subsubsection{Root stacks} 
We briefly review some properties of the root stacks $\sqrt[\leftroot{-2}\uproot{2}d]{\mathcal{L}_{\mathcal{M}}/\mathcal{M}}$ and $\sqrt[\leftroot{-2}\uproot{2}d]{\mathcal{L}_{\mathcal{J}}/\mathcal{J}}$.  Our primary reference is \cite{zbMATH05240826}, and more information can be found in 
\cite[\S\S 3.3 and 4]{MaxModernPrimer},   \cite[\S 10.3]{Olsson}, and  \cite[App.~B]{AGVRoot}.

The root stack $\sqrt[\leftroot{-2}\uproot{2}d]{\mathcal{L}_{\mathcal{M}}/\mathcal{M}}$ is the stack parametrizing $d$-th roots of the line bundle $\mathcal{L}_\mathcal{M}$. In other words, $\sqrt[\leftroot{-2}\uproot{2}d]{\mathcal{L}_{\mathcal{M}}/\mathcal{M}}$ represents the functor which associates to every scheme $\phi \colon Y \to \mathcal{M}$ the groupoid of pairs $(\mathcal{N}, f)$ where $\mathcal{N}$ is a line bundle on $Y$ and $f \colon \mathcal{N}^{\otimes d}\to \phi^*\mathcal{L}_{\mathcal{M}}$ is an isomorphism. An isomorphism between $(\mathcal{N}_1, f_1)$ and $(\mathcal{N}_2, f_2)$ is an isomorphism of line bundles $g \colon  \mathcal{N}_1\to \mathcal{N}_2$  such that $f_2 \circ g^{\otimes d} = f_1$. One defines $\sqrt[\leftroot{-2}\uproot{2}d]{\mathcal{L}_{\mathcal{J}}/\mathcal{J}}$ similarly. 

Since $\mathcal{L}_{\mathcal{J}}$ is the pullback of $\mathcal{L}_{\mathcal{M}}$ along $\pi$, one has a Cartesian diagram
\begin{equation}\label{CDRS}
\begin{tikzcd}
\sqrt[\leftroot{-2}\uproot{2}d]{\mathcal{L}_{\mathcal{J}}/\mathcal{J}}  \arrow{r}{\sqrt[\leftroot{-2}\uproot{2}d]{\pi}} \arrow[swap]{d}{p_{\mathcal{J}}} & \sqrt[\leftroot{-2}\uproot{2}d]{\mathcal{L}_{\mathcal{M}}/\mathcal{M}} \arrow{d}{p_{\mathcal{M}}} \\
\mathcal{J} \arrow{r}{\pi} & \mathcal{M}.
\end{tikzcd}
\end{equation} 
In particular, $\sqrt[\leftroot{-2}\uproot{2}d]{\mathcal{L}_{\mathcal{J}}/\mathcal{J}}  \to \sqrt[\leftroot{-2}\uproot{2}d]{\mathcal{L}_{\mathcal{M}}/\mathcal{M}}$ is a $\mathbb{G}_m$-torsor. The stacks $\sqrt[\leftroot{-2}\uproot{2}d]{\mathcal{L}_{\mathcal{M}}/\mathcal{M}}$ and $\sqrt[\leftroot{-2}\uproot{2}d]{\mathcal{L}_{\mathcal{J}}/\mathcal{J}}$ have universal  line bundles $\mathcal{U}_{\mathcal{M}}$ and $\mathcal{U}_{\mathcal{J}}=\left(\sqrt[\leftroot{-2}\uproot{2}d]{\pi}\right)^* \mathcal{U}_{\mathcal{M}}$ such that $\mathcal{U}_{\mathcal{M}}^{\otimes d} = p_{\mathcal{M}}^* \, \mathcal{L}_{\mathcal{M}}$ and  $\mathcal{U}_{\mathcal{J}}^{\otimes d} = p_{\mathcal{J}}^* \, \mathcal{L}_{\mathcal{J}}$.

The key property that we will use about $\sqrt[\leftroot{-2}\uproot{2}d]{\mathcal{L}_{\mathcal{J}}/\mathcal{J}}$ (and  $\sqrt[\leftroot{-2}\uproot{2}d]{\mathcal{L}_{\mathcal{M}}/\mathcal{M}}$ as well)
is that its category of quasi-coherent sheaves has an eigendecomposition with respect to the action of the inertial group $\mu_d$ of $d$-th roots of unity, and the degree zero component on $\sqrt[\leftroot{-2}\uproot{2}d]{\mathcal{L}_{\mathcal{J}}/\mathcal{J}}$ (resp.,~$\sqrt[\leftroot{-2}\uproot{2}d]{\mathcal{L}_{\mathcal{M}}/\mathcal{M}}$) consists of pullbacks of quasi-coherent sheaves on $\mathcal{J}$ (resp.,~$\mathcal{M}$). By \cite[Lemma 3.1.1.7]{zbMATH05240826},
the eigensheaves for the trivial character on the root stack are identified with sheaves on the base stack $\mathcal{M}$. This allows one to conclude that the pullback via $p_{\mathcal{J}}$ is fully faithful, i.e., two quasi-coherent sheaves on $\mathcal{J}$ are isomorphic if and only if they are isomorphic when pulled back to $\sqrt[\leftroot{-2}\uproot{2}d]{\mathcal{L}_{\mathcal{J}}/\mathcal{J}}$.

\subsubsection{The final descent}  \label{sec:finaldescent}
Let $\mathbb{V}^{J}:=\mathbb{V}^J(V;M)$. The quasi-coherent sheaf $p_{\mathcal{J}}^*\mathbb{V}^{J}\otimes \mathcal{U}_{\mathcal{J}}$
on $\sqrt[\leftroot{-2}\uproot{2}d]{\mathcal{L}_{\mathcal{J}}/\mathcal{J}}$  has an action of $\mathbb{G}_m$ as in \cite[\S\S 4.2.1, 6.3.2]{dgt}\footnote{For this action, one uses the filtration on $\mathbb{V}^J(V;M)$ induced by the $\mathbb{Z}_{\geq 0}$-grading of the module $M$.}. 
Descending along the $\mathbb{G}_m$-torsor $\sqrt[\leftroot{-2}\uproot{2}d]{\mathcal{L}_{\mathcal{J}}/\mathcal{J}}  \to \sqrt[\leftroot{-2}\uproot{2}d]{\mathcal{L}_{\mathcal{M}}/\mathcal{M}}$, we obtain a sheaf $\mathcal{F}$ on $\sqrt[\leftroot{-2}\uproot{2}d]{\mathcal{L}_{\mathcal{M}}/\mathcal{M}}$ for which
\begin{equation*}\label{rootdescent}
\left(\sqrt[\leftroot{-2}\uproot{2}d]{\pi}\right)^*\mathcal{F}\cong p_{\mathcal{J}}^*\mathbb{V}^{J}\otimes \mathcal{U}_{\mathcal{J}}.
\end{equation*}

Tensoring the above with $\left(\sqrt[\leftroot{-2}\uproot{2}d]{\pi}\right)^*\mathcal{U}_{\mathcal{M}}^{\vee}= \mathcal{U}_{\mathcal{J}}^{\vee}$, we have  
\begin{equation}\label{rootdiagram}
\left(\sqrt[\leftroot{-2}\uproot{2}d]{\pi}\right)^*\left(\mathcal{F} \otimes \mathcal{U}_{\mathcal{M}}^{\vee} \right) 
\cong p_{\mathcal{J}}^*\mathbb{V}^{J}.
\end{equation}
It follows that $\mathcal{F} \otimes \mathcal{U}_{\mathcal{M}}^{\vee}$ lives in the degree zero component, hence it descends to 
 a sheaf $\mathbb{V}(V;M)$ on $\mathcal{M}$ such that
$p_{\mathcal{M}}^* \, \mathbb{V}(V;M) \cong \mathcal{F} \otimes \mathcal{U}_{\mathcal{M}}^{\vee}$. By its construction and the commutativity of \eqref{CDRS}, one has 
\begin{equation*}\label{FF}
\left(\pi \circ p_{\mathcal{J}}\right)^*  \mathbb{V}(V;M)  = \left(p_{\mathcal{M}} \circ \sqrt[\leftroot{-2}\uproot{2}d]{\pi} \right)^*  \mathbb{V}(V;M) =p_{\mathcal{J}}^*\mathbb{V}^{J}.
\end{equation*} Since the pullback of sheaves to a root stack is fully faithful, we deduce that $\mathbb{V}^{J} = \pi^* \, \mathbb{V}(V;M)$. In particular, $\mathbb{V}^{J}$ descends to a sheaf $\mathbb{V}(V;M)$, which is therefore well-defined on  $\overline{\mathcal{M}}_{g,1}$.

\begin{rmk} \label{rmk:vratdescent} Here  we list some observations.
\begin{enumerate} \item  When $V$ is $C_2$-cofinite and rational, we can descend $\mathbb{V}^J(V;M)$ to a sheaf $\mathbb{V}(V;M)$ over $\overline{\mathcal{M}}_{g,n}$ for every finitely-generated $V$-module $M$. Indeed, since the category of $V$-modules is semisimple, we can decompose $M=\oplus_{\ell \in I} M^{\ell}$ with $M^{\ell}$ a simple $V$-module with rational conformal dimension and $I$ a finite index set. This induces the decomposition $\mathbb{V}^J(V;M) = \oplus_{\ell \in I} \mathbb{V}^J(V;M^{\ell})$, and we can apply the descent argument of \S\ref{sec:finaldescent} to each component $\mathbb{V}^J(V;M^{\ell})$.

\item For simplicity, we have given the details of the argument when the conformal dimensions are rational. This is indeed the case for simple modules over a $C_2$-cofinite $V$, and  allows for the use of a root stack defined by a line bundle. While not needed for this paper, an analogous argument can be made for complex, irrational conformal dimensions, using a gerbe that is a generalization of the root stack.

\item When $M^i=V$ for all $i$, the action of $\mathbb{G}_m^{\oplus n}$  on $\mathbb{V}^J(V;M^\bullet)$ from \cite[\S\S 4.2.1, 6.3.2]{dgt} is compatible with the restriction of the action of 
$(\textrm{Aut}\,\mathcal{O})^{\oplus n}$. In this case, the above construction simplifies, since the two descents along the $(\textrm{Aut}_+\mathcal{O})^{\oplus n}$-torsor and $\mathbb{G}_m^{\oplus n}$-torsor in  \eqref{eq:torsors} are equivalent to the descent along the $(\textrm{Aut}\,\mathcal{O})^{\oplus n}$-torsor. 

\item The method used  to construct the sheaf $\mathbb{V}(V;M^\bullet)$ on $\overline{\mathcal{M}}_{g,n}$ from $\mathbb{V}^J(V;M^\bullet)$ guarantees that, when $\mathbb{V}(V;M^\bullet)$ is of finite rank on ${\mathcal{M}}_{g,n}$, the Chern character of $\mathbb{V}(V;M^\bullet)$ on ${\mathcal{M}}_{g,n}$ is given by \cite[Cor 9.1]{dgt}.\end{enumerate}
\end{rmk}

\subsection{Proof of the {\protect\hyperlink{thm:VBS}{VB corollary}}}

\label{sec:proofofVBC}

By means of Theorems \ref{thm:CoherenceOnS} and  \ref{thm:SewingAndFactorization}, one concludes that the sheaf of coinvariants $\mathbb{V}(V;M^\bullet)$ is a vector bundle of finite rank on $\widetriangle{\mathcal{M}}_{g,n}$, and this gives rise to a vector bundle of finite rank on $\overline{\mathcal{M}}_{g,n}$,
 as in \cite{tuy}, \cite[\S 2.7]{sorger}, \cite{loowzw}, \cite{nt}. We sketch the argument  for completeness.

First, we argue that $\mathbb{V}(V;M^\bullet)$ is a vector bundle of finite rank on $\widetriangle{\mathcal{M}}_{g,n}$.
The sheaf $\mathbb{V}(V;M^\bullet)$ on $\widehat{\mathcal{M}}_{g,n}$ is coherent (Corollary \ref{cor:CoherenceOnMgn}) and is equipped with a projectively flat connection  \cite{dgt}. As in \cite{tuy}, see also \cite[\S 2.7]{sorger}, it follows that $\mathbb{V}(V;M^\bullet)$ is locally free of finite rank on $\widehat{\mathcal{M}}_{g,n}$.  After Theorem \ref{thm:CoherenceOnS} and gluing the sheaf as in \cite{BLDescente},  it follows that the sheaf $\mathbb{V}(V;M^\bullet)$ is also coherent on $\widetriangle{\mathcal{M}}_{g,n}$.
It remains to show that $\mathbb{V}(V;M^\bullet)$ is locally free  on $\widetriangle{\mathcal{M}}_{g,n}$.
For this, consider a stable family of $n$-pointed nodal curves $(\mathscr{C}_0 \to \text{Spec}(R), P_\bullet)$, and for simplicity, assume that it has only one simple node. Consider its formal smoothing $(\mathscr{C} \to \mathrm{Spec}(R\llbracket q\rrbracket), P_\bullet)$ as described in \S \ref{smoothings}. For each $i$, fix a formal coordinate $t_i$ at $P_i(S)$. The sewing theorem (Theorem \ref{thm:SewingAndFactorization}) implies that $\mathbb{V}(V;M^\bullet)_{(\mathscr{C}/S,P_\bullet, t_\bullet)}$ is locally free of finite rank, hence we conclude the argument.  
For families of curves with more nodes, one proceeds similarly. It follows that $\mathbb{V}(V;M^\bullet)$ is a vector bundle of finite rank on $\widetriangle{\mathcal{M}}_{g,n}$.

Finally, since $V$ is rational, by \S\ref{sec:sheafofcoinariantsfinal} and Remark \ref{rmk:vratdescent}(i), we can descend $\mathbb{V}(V;M^\bullet)$ to a sheaf of coinvariants on $\overline{\mathcal{M}}_{g,n}$. As the descent of a vector bundle is a vector bundle, this concludes the proof of the \hyperlink{thm:VBS}{VB corollary}.
\qed


\section{Examples}\label{ExampleSection}
Here we list examples of vertex operator algebras $V$ satisfying the hypotheses of our theorems, namely:
 (1) $V=\bigoplus_{i\in \mathbb{Z}_{\ge 0}}V_i$ with $V_0\cong \mathbb{C}$; (2) $V$ is rational; and (3) $V$ is $C_2$-cofinite.

\subsection{Virasoro VOAs}
Given the Lie subalgebra 
$\mathrm{Vir}_{\ge 0}:= \mathbb{C}K \oplus z \mathbb{C}\llbracket z \rrbracket \partial_z$ of the Virasoro Lie algebra $\mathrm{Vir}$, and  $c$, $h \in \mathbb{C}$, 
let $M_{c,h}:=U(\mathrm{Vir})\otimes_{U\left(\mathrm{Vir}_{\ge 0}\right)} \mathbb{C}\bf{1}$, 
where $\mathbb{C}\bm{1}$ inherits the structure of a $\mathrm{Vir}_{\ge 0}$-module by setting 
$L_{p>0}{\bf{1}} =0$, $L_0 {\bf{1}} =h  {\bf{1}}$, and $K {\bf{1}} =c {\bf{1}}$. There is a unique maximal proper submodule $J_{c,h} \subset M_{c,h}$. 
For $h=0$,  $J_{c,0}$ contains a submodule generated by  the singular vector $L_{-1}{\bf{1}}\in M_{c,0}$  \cite{ff}.  Set
\[
L_{c,h}:=M_{c,h}/J_{c,h}, \quad  \ M_{c}:= M_{c,0}/\langle L_{-1}{\bf{1}} \rangle, \quad \mbox{ and }\quad Vir_{c} :=L_{c,0}.
\]
If $c\ne c_{p,q}:=1-\frac{6(p-q)^2}{pq}$, with relatively prime $p,q\in \mathbb{N}$ such that $1<p<q$, then $M_c\cong Vir_{c}$, that is, $J_{c,0}=\langle L_{-1}{\bf{1}} \rangle$, 
while for $c=c_{p,q}$, the submodule $J_{c,0}$ is generated by  two singular vectors \cite{ff}.
By \cite[Thm 4.3]{FrenkelZhu},  $M_{c}$ and $Vir_{c}$ are VOAs. Since  $A(M_{c})\cong \mathbb{C}[x]$  \cite[Thm 4.6]{FrenkelZhu},  $M_c$ is not rational. However, $Vir_{c}$ is rational if and only if $c= c_{p,q}$ \cite[Thm 4.2]{WWang} if and only if $Vir_{c}$ is $C_2$-cofinite  \cite[Lemma 12.3]{DongLiMasonModular} (see also \cite[Prop.~3.4.1]{ArakawaC2Lisse}).  
In this case, $A(Vir_{c})$ is a quotient of $\mathbb{C}[x]$, and simple $Vir_c$-modules are the $L_{c,h}$, for $h=\frac{(np-mq)^2-(p-q)^2}{4pq}$ with  $0<m<p$ and  $0<n<q$.

\subsection{Simple affine VOAs} 
One may associate to a finite-dimensional complex simple Lie algebra $\mathfrak{g}$  and  $\ell \in \mathbb{Z}_{>0}$, a simple vertex operator algebra $L_{\ell}(\mathfrak{g})$, 
described in \cite[\S 2]{FrenkelZhu}, \cite[\S 6.2]{lepli}  (see also  \cite[\S A.1.1]{nt}).
This is rational by \cite[Thm 3.1.3]{FrenkelZhu} 
and $C_2$-cofinite by \cite{zhu}
(see also \cite[Prop.~12.6]{DongLiMasonModular}, \cite[Prop.~3.5.1]{ArakawaC2Lisse}). 

\subsection{The moonshine module $V^{\natural}$}
A rational vertex operator algebra $V$ with no nontrivial simple $V$-modules is called \textit{holomorphic}, and if $(1)$  and $(3)$ also hold, then $V$ must have central charge divisible by $8$ \cite{dong2004holomorphic}.  One example is given by the moonshine module $V^{\natural}$ of central charge $24$, whose automorphism group is the monster  group \cite{FLM1}.

\subsection{Even lattice vertex algebras}\label{Lattice}
Vertex operator algebras $V_L$  given by positive-definite even lattices $L$ of finite rank \cite{Borcherds} are rational \cite{DLattice} and $C_2$-cofinite  \cite{DongLiMasonModular}. Zhu's algebra is described in \cite[Thm 3.4]{DLMZhu}.

 \subsection{Exceptional $W$-algebras}\label{W}
 Arakawa \cite{ArakawaC2W} has shown that a large class of simple 
 $W$-algebras are $C_2$-cofinite, including the minimal series principal $W$-algebras \cite{EFKW} and the exceptional $W$-algebras  of Kac-Wakimoto \cite{KW08}.
Moreover, the minimal series principal $W$-algebras and a large subclass of exceptional affine $W$-algebras are rational \cite{arakawa2013, ArakawaRatW, arakawa2019rationality}.

 \subsection{Orbifolds, commutants, and tensor products}
More vertex operator algebras can be obtained from the  examples discussed above through standard constructions resulting in orbifold algebras, commutants, and tensor products.
These constructions  often preserve  our desired properties.  For instance,
for  a finite subgroup $G$ of the automorphism group $Aut(V)$, the orbifold algebra
 $V^G$ is conjecturally $C_2$-cofinite and rational when so is $V$.  This holds for $G$ solvable and $V$ simple  \cite{MiyamotoCyclic, CarnahanMiyamoto}.  For a subalgebra $\mathcal{A}\subset V$,
the commutant $\operatorname{Com}(\mathcal{A}, V)$ \cite{FrenkelZhu} is conjecturally $C_2$-cofinite and rational if so are both  $\mathcal{A}$ and $V$. This holds for parafermions \cite{DongRen}.
The VOAs $V^1, \ldots, V^m$ are rational if and only if $V=\bigotimes_{i=1}^mV^i$ is rational \cite{DongMasonZhu}; in this case, if the $V^i$ are $C_2$-cofinite, then so is $V$ \cite{DongLiMasonRegular}.


\appendix

\section{Zhu's Lie algebra and isomorphic coinvariants}
\label{sec:CoinvariantsHistory}
For a smooth curve $C$ and a  quasi-primary generated vertex algebra $V$ with $V_0\cong \mathbb{C}$, in addition to the chiral Lie algebra $\Ll_{C\setminus P_\bullet}(V)$ (\S\ref{ChiralDef}), one also has Zhu's Lie algebra  $\mathfrak{g}_{\,C\setminus P_\bullet}(V)$, reviewed in \S\ref{sec:gCoinv}. 

In Proposition \ref{prop:IsoCo},  we show that when defined, $\mathfrak{g}_{C\setminus P_ \bullet}(V)$ is isomorphic to the image of  $\Ll_{C\setminus P_\bullet}(V)$ under the Lie algebra homomorphism $\varphi_{\Ll}$ (Proposition \ref{prop:IsoCo}). 
Nagatomo and Tsuchiya extend the definition of $\mathfrak{g}_{C\setminus P_ \bullet}(V)$ to stable pointed rational curves \cite{nt}, and they  indicate that  their coinvariants are equivalent to those studied by Beilinson and Drinfeld in \cite{bd}, suggesting they knew that Proposition \ref{prop:IsoCo} holds in that case. 

A quasi-primary vector  is an element $A\in V$ such that $L_{1}A=0$, and  $V$  is quasi-primary generated if and only if $L_{1}V_1=0$ \cite{dlinm}. 
A vertex algebra $V=\bigoplus_{i\geq 0}V_i$ with $V_0 \cong \mathbb{C}$ satisfies $L_{1}V_1=0$
if and only if $V\cong V'$ (see \cite[\S 5.3]{fhl} and \cite[\S2]{DM}).  In particular, in the results of Huang \cite{HuangVerlinde} and Codogni \cite{codogni}, the vertex algebras studied are quasi-primary generated.

\subsection{The Lie algebra $\mathfrak{g}_{\,C\setminus P_\bullet}(V)$}
\label{sec:gCoinv}

In \cite{zhu},  given a smooth pointed curve $(C,P_\bullet)$ and  a  quasi-primary generated vertex operator algebra~$V$ for which $V_0\cong \mathbb{C}$, Zhu defines a Lie algebra   $\mathfrak{g}_{\,C\setminus P_\bullet}(V)$, generalizing the construction of Tsuchiya, Ueno, and Yamada for affine Lie algebras.  Namely, consider
\begin{equation*}
\mathfrak{g}_{\,C\setminus P_{\bullet}}(V) := \varphi_{\mathfrak{g}}\left(\oplus_{k\geq 0} V_k \otimes \text{H}^0\left(C\setminus P_\bullet, \omega_C^{\otimes 1-k}\right)\right)
\end{equation*}
where
\begin{equation}
\label{eq:phig}
\varphi_{\mathfrak{g}}\colon \oplus_{k\geq 0} V_k \otimes \text{H}^0\left(C\setminus P_\bullet, \omega_C^{\otimes 1-k}\right) \rightarrow \oplus_{i=1}^n \mathfrak{L}_{t_i}(V)\end{equation}
is the map induced by
\[
B\otimes \mu \mapsto \left(  \textrm{Res}_{t_i=0}\,Y[B,t_i] \mu_{P_i}(dt_i)^k \right)_{i=1,\dots,n}.\
\]
Here $t_i$ is a formal coordinate at the point $P_i$, $Y[B,t_i]:=\sum_{k\in \mathbb{Z}}B_{[k]}t_i^{-k-1}$, and  $\mu_{P_i}$ is the Laurent series expansion of $\mu$ at $P_i$,  the image of $\mu$ via
\[
\text{H}^0\left(C\setminus P_\bullet, \omega_C^{\otimes 1-k}\right) \rightarrow \text{H}^0\left(D^\times_{P_i}, \omega_C^{\otimes 1-k}\right) \simeq_{t_i} \mathbb{C}(\!( t_i )\!) (dt_i)^{1-k}.
\]
When $V$ is assumed to be quasi-primary generated with $V_0\cong \mathbb{C}$, Zhu shows that $\mathfrak{g}_{\,C\setminus P_{\bullet}}(V)$ is a Lie subalgebra of $\mathfrak{L}(V)^{\oplus n}$. The argument uses that any \textit{fixed} smooth algebraic curve admits an atlas such that all transition functions are M\"obius transformations. 
Transition functions between charts on families of curves of arbitrary genus are more general, hence the need to consider the more involved construction for the chiral Lie algebra based on the $(\mathrm{Aut}\, \mathcal{O})$-twist of $V$ in \S\ref{sec:VAstable}.

\subsection{Isomorphism of coinvariants}
\label{sec:IsoCo}

When $\mathfrak{g}_{\,C\setminus P_\bullet}(V)$ is well-defined and $C\setminus P_\bullet$ is affine, one can define the  space of coinvariants $M^{\bullet}_{\mathfrak{g}_{\,C\setminus P_\bullet}(V)}$ as the quotient of the $\mathfrak{L}(V)^{\oplus n}$-module  $M^\bullet$ by the action of the Lie subalgebra $\mathfrak{g}_{\,C\setminus P_\bullet}(V)$ of $\mathfrak{L}(V)^{\oplus n}$.
These spaces were introduced in \cite{ZhuGlobal}, and studied also in \cite{an1, nt}.
Recall the homomorphisms $\varphi_{\Ll}$ from \eqref{phiL} and $\varphi_{\mathfrak{g}}$ from~\eqref{eq:phig}.

\begin{proposition}
\label{prop:IsoCo}
When $\mathfrak{g}_{\,C\setminus P_\bullet}(V)$ is well-defined (\S\ref{sec:gCoinv}), one has
\[
\textup{Im}(\varphi_{\Ll}) \cong \textup{Im}(\varphi_{\mathfrak{g}}).
\]
It follows that there exists an isomorphism of vector spaces
\[
M^\bullet_{\mathfrak{g}_{\,C\setminus P_\bullet}(V)}  \; \cong \; M^\bullet_{\Ll_{C\setminus P_\bullet}(V)}.
\]
\end{proposition}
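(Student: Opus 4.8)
The plan is to reduce everything to a single equality of subspaces of $\bigoplus_{i=1}^{n}\mathfrak{L}_{t_i}(V)$. Since the $\bigoplus_{i=1}^{n}\mathfrak{L}_{t_i}(V)$-action on $M^\bullet$ is diagonal and both $\mathfrak{g}_{\,C\setminus P_\bullet}(V)$ and $\varphi_{\mathscr{L}}\!\left(\mathscr{L}_{C\setminus P_\bullet}(V)\right)$ sit inside $\bigoplus_{i=1}^{n}\mathfrak{L}_{t_i}(V)$ and act on $M^\bullet$ through this inclusion, it is enough to prove $\mathrm{Im}(\varphi_{\mathscr{L}})=\mathrm{Im}(\varphi_{\mathfrak{g}})$ there; the isomorphism of coinvariants is then immediate, because $\mathscr{L}_{C\setminus P_\bullet}(V)\cdot M^\bullet$ and $\mathfrak{g}_{\,C\setminus P_\bullet}(V)\cdot M^\bullet$ are cut out by the same subspace acting the same way. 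I would also assume $C\setminus P_\bullet$ affine, recovering the general case by adjoining auxiliary marked points carrying the module $V$ and passing to the direct limit via propagation of vacua as in \S\ref{sec:Coinvariants}.

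\textbf{Identifying the sources.} For $C\setminus P_\bullet$ affine one has $\mathscr{L}_{C\setminus P_\bullet}(V)=H^0\!\left(C\setminus P_\bullet,\mathscr{V}_C\otimes\omega_C\right)\big/\nabla H^0\!\left(C\setminus P_\bullet,\mathscr{V}_C\right)$, and Lemmas \ref{lem:small} and \ref{lemma:gr} give $H^0\!\left(C\setminus P_\bullet,\mathscr{V}_C\otimes\omega_C\right)\cong\bigoplus_{k\geq 0} V_k\otimes H^0\!\left(C\setminus P_\bullet,\omega_C^{\otimes 1-k}\right)$, whose right-hand side is exactly the source of $\varphi_{\mathfrak{g}}$ in \eqref{eq:phig}. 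Sections in $\mathrm{Im}\,\nabla$ restrict to $\mathrm{Im}\,\nabla$ on each $D^\times_{P_i}$ and hence vanish under \eqref{eq:simeqt}, so the composite $\widetilde\varphi_{\mathscr{L}}$ of restriction to the $D^\times_{P_i}$ with \eqref{eq:simeqt} induces $\varphi_{\mathscr{L}}$ and $\mathrm{Im}(\varphi_{\mathscr{L}})=\mathrm{Im}(\widetilde\varphi_{\mathscr{L}})$. What remains is to show that $\widetilde\varphi_{\mathscr{L}}$ and $\varphi_{\mathfrak{g}}$, viewed as maps out of $\bigoplus_{k}V_k\otimes H^0(C\setminus P_\bullet,\omega_C^{\otimes 1-k})$, have the same image.

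\textbf{Comparing the maps.} Here I would argue degree by degree, using that the transition functions of $\mathscr{V}_C\otimes\omega_C$ along the filtration $\mathscr{V}_{\leq k}$ are block upper-triangular with diagonal blocks those of $\left(\omega_C^{\otimes 1-k}\right)^{\oplus\dim V_k}$, exactly as in the proof of Proposition \ref{prop:chiralnodal}. For homogeneous $B\otimes\mu$ with $B\in V_k$ and $\mu$ a $(1-k)$-differential, the restriction to $D^\times_{P_i}$ trivialized by $t_i$ has top graded piece $B\otimes\mu_{P_i}(dt_i)^k$, and applying $\mathrm{Res}_{t_i=0}Y[-,t_i]$ to it gives $\varphi_{\mathfrak{g}}(B\otimes\mu)_{P_i}$ by definition. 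The lower pieces are controlled by quasi-primary-generatedness: over the M\"obius atlas Zhu uses to define $\mathfrak{g}_{\,C\setminus P_\bullet}(V)$ a quasi-primary vector (one killed by $L_1$) transforms purely tensorially, so for quasi-primary $B$ the subsheaf generated by $B$ is honestly $\omega_C^{\otimes -k}$ and $\widetilde\varphi_{\mathscr{L}}(B\otimes\mu)=\varphi_{\mathfrak{g}}(B\otimes\mu)$ on the nose. Choosing the splitting of Lemma \ref{lem:small} compatibly with quasi-primary vectors, and using that $L_{-1}$-descendant directions can be traded, modulo $\mathrm{Im}\,\nabla$, against lower-degree data (recall $\nabla$ equals $L_{-1}\otimes\mathrm{id}+\mathrm{id}\otimes\partial_t$ in a coordinate), the residual discrepancy for a general $B\in V_k$ is, again by Lemma \ref{lem:small} and affineness, itself the $\varphi_{\mathfrak{g}}$-image of a global section of $\bigoplus_{k'<k}V_{k'}\otimes\omega_C^{\otimes 1-k'}$; an induction on $k$ then yields $\mathrm{Im}(\widetilde\varphi_{\mathscr{L}})=\mathrm{Im}(\varphi_{\mathfrak{g}})$.

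\textbf{Main obstacle.} The hard part will be the comparison step: reconciling the $(\mathrm{Aut}\,\mathcal{O})$-twisted bundle $\mathscr{V}_C$ behind the chiral Lie algebra with Zhu's coordinate patching of $\bigoplus_{k}V_k\otimes\omega_C^{\otimes 1-k}$ over a M\"obius atlas, checking independence of the noncanonical splitting of Lemma \ref{lem:small} and of the coordinates $t_i$, and pinning down precisely how quasi-primary-generatedness forces the lower-triangular corrections to remain inside $\mathrm{Im}(\varphi_{\mathfrak{g}})$. The reduction to the affine case, the identification of sources, and the passage from equal subspaces to isomorphic coinvariants should all be routine.
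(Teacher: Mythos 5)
Your proposal is correct and follows essentially the same route as the paper: reduce to $C\setminus P_\bullet$ affine, identify the source of $\varphi_{\mathfrak{g}}$ with $H^0\left(C\setminus P_\bullet,\mathscr{V}_C\otimes\omega_C\right)$ via Lemmas \ref{lemma:gr} and \ref{lem:small}, note that $\mathrm{Im}\,\nabla$ acts trivially, and conclude that the two images in $\oplus_i\mathfrak{L}_{t_i}(V)$ agree. The paper dispatches your "main obstacle" more directly — since the splitting of Lemma \ref{lem:small} is upper-triangular along the filtration, the image of the restriction map is unchanged and coincides with $\mathrm{Im}(\varphi_{\mathfrak{g}})$ by the definition \eqref{eq:phig}, so neither the M\"obius-atlas/quasi-primary discussion nor the induction on $k$ is needed (though your induction is a sound way to make the same point explicit).
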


\begin{proof}
One has
\begin{multline}
\label{eq:gCPH0oplus1}
\oplus_{k\geq 0} V_k \otimes \text{H}^0\left(C\setminus P_\bullet, \omega_C^{\otimes 1-k}\right) \cong \text{H}^0\left(C\setminus P_\bullet, \oplus_{k\geq 0}V_k \otimes \omega_C^{\otimes 1-k} \right) \\
 \cong \text{H}^0\left(C\setminus P_\bullet, \oplus_{k\geq 0} \left(\omega_C^{\otimes 1-k} \right)^{\oplus \dim V_k}\right).
 \end{multline}
From Lemma \ref{lemma:gr}, one has $\textup{gr}_\bullet \Vv_C \cong  \oplus_{k\geq 0}  \left(\omega_C^{\otimes -k} \right)^{\oplus \dim V_k}$.
It follows that 
\[
\text{H}^0\left(C\setminus P_\bullet, \oplus_{k\geq 0} \left(\omega_C^{\otimes 1-k} \right)^{\oplus \dim V_k}\right)
 \cong \text{H}^0\bigl(C\setminus P_\bullet, \textup{gr}_\bullet \Vv_C \otimes \omega_C \bigr).
 \]
 Now by Lemma \ref{lem:small},  
 \begin{equation}\label{eq:gCPH0oplus}
 \text{H}^0\left(C\setminus P_\bullet, \textup{gr}_\bullet \Vv_C \otimes \omega_C \right)
\cong \text{H}^0\left(C\setminus P_\bullet, \Vv_C \otimes \omega_C \right).
\end{equation}
On the other hand, as $C\setminus P_\bullet$ is assumed to be affine, one has
\[
\Ll_{C\setminus P_\bullet}(V) \cong  \text{H}^0\left(C\setminus P_\bullet, \Vv_C\otimes \omega_C \right) / \nabla \text{H}^0\left(C\setminus P_\bullet, \Vv_C \right) .
\]
The map $\varphi_{\Ll}$  is induced from the composition
\begin{equation}
\label{eq:H0oplusH0End}
\text{H}^0\left(C\setminus P_\bullet, \Vv_C\otimes \omega_C \right) \rightarrow \oplus_{i=1}^n \text{H}^0\left(D^\times_{P_i}, \Vv_C\otimes \omega_C \right)
\rightarrow \oplus_{i=1}^n \mathfrak{L}_{t_i}(V).
\end{equation}
The first map is canonical and obtained by restricting sections; the second is \eqref{eq:simeqt}.
By \cite[\S 6.6.9]{bzf}, sections in $\nabla \text{H}^0\left(D^\times_{P_i}, \Vv_C \right)$ act trivially. Hence \eqref{eq:H0oplusH0End} induces a map from the Lie algebra 
$\Ll_{C\setminus P_\bullet}(V)$ to $\oplus_{i=1}^n \mathfrak{L}_{t_i}(V)$.
It follows that the image of $\varphi_{\Ll}$ coincides with the image  of
$\text{H}^0\left(C\setminus P_\bullet, \Vv_C\otimes \omega_C \right)$ in $\oplus_{i=1}^n \mathfrak{L}_{t_i}(V)$ via~\eqref{eq:H0oplusH0End}.
Composing \eqref{eq:gCPH0oplus1} and \eqref{eq:gCPH0oplus}, and by the definition of $\varphi_{\mathfrak{g}}$ in \eqref{eq:phig}, the image of the map in \eqref{eq:H0oplusH0End} coincides with the image of~$\varphi_{\mathfrak{g}}$.
\end{proof}

\section*{Acknowledgements}
We thank Dennis Gaitsgory for  questions which led to the need to replace $\mathscr{V}_C$ with $\Vv_C$.  
We thank Igor Frenkel, Bin Gui, Yi-Zhi Huang, Danny Krashen, Jim Lepowsky, and Sven M\"oller for helpful discussions. Thanks also to Yi-Zhi Huang and Giulio Codogni for comments on a preliminary version of the manuscript, and to Andr\'{e} Henriques, on a later version. We are indebted to \cite{bzf} for their work on smooth curves, and to \cite{nt} for the arguments on the factorization and sewing for coinvariants by Zhu's Lie algebra on rational curves.  Finally, we wish to thank the referees for carefully reading our work, and one generous referee in particular, who has provided many valuable comments.  
 Gibney was supported by NSF DMS--1902237.



\bibliographystyle{alphanumN}
\bibliography{Biblio}

\end{document}